\m@th\displaystyle{##}$\hfil}
\m@th\displaystyle{##}$\hfil}{\lbrace}{.}
\title[Cubic fourfolds with a symplectic automorphism of prime order]{Cubic fourfolds with a symplectic automorphism of prime order}
\author{Simone Billi}
\address[Simone Billi]{DIMA department of mathematics, University of Genova, Via dodecaneso 35, Cap 16146, Genova, Italy}
\email{simone.billi@edu.unige.it}
\author{Annalisa Grossi}
\address[Annalisa Grossi]{Alma Mater studiorum Università di Bologna Dipartimento di Matematica,
Piazza di Porta San Donato 5, Bologna, 40126 Italia}
\email{annalisa.grossi3@unibo.it}
\author{Lisa Marquand}
\address[Lisa Marquand]{Courant Institute of Mathematical Sciences, New York University, 251 Mercer St, NY 10012, USA}
\email{lisa.marquand@nyu.edu}
\date{\today}
\thanks{Simone Billi was partially supported by the Curiosity Driven 2021 Project "Varieties with trivial or
negative canonical bundle and the birational geometry of moduli spaces of curves: a
constructive approach" - Programma nazionale per la Ricerca (PNR) DM 737/2021.\\
Annalisa Grossi and Simone Billi were supported by the European Union - NextGenerationEU under the National Recovery and Resilience Plan (PNRR) - Mission 4 Education and research - Component 2 From research to business - Investment 1.1 Notice Prin 2022 - DD N. 104 del 2/2/2022, from title "Symplectic varieties: their interplay with Fano manifolds and derived categories", proposal code 2022PEKYBJ – CUP J53D23003840006. \\
Simone Billi and Annalisa Grossi are members of the INdAM group GNSAGA, and Simone Billi was partially supported by it.
Lisa Marquand was partially  supported by NSF grant DMS-2503390. }
\keywords{Cubic fourfolds, symplectic automorphism}
\thanks{}
\begin{document}

\begin{abstract}	
We determine the algebraic and transcendental lattices of a general cubic fourfold with a symplectic automorphism of prime order. We prove that cubic fourfolds admitting a symplectic automorphism of order at least three are rational, and we exhibit two families of rational cubic fourfolds that are not equivariantly rational with respect to their group of automorphisms. As an application, we determine the cohomological action of symplectic birational transformations of manifolds of OG10 type that are induced by prime order sympletic automorphisms of cubic fourfolds.
\end{abstract}

\maketitle
\section{Introduction}
In recent years, there has been renewed interest in studying automorphisms of cubic fourfolds from a Hodge theoretic perspective. The automorphisms of a cubic fourfold are linear, and those of prime order are classified in \cite{GonzalezAguilera.Liendo:aut.prime.ord.smooth.cubic.n-folds}, where an explicit description of the families of cubic fourfolds admitting such an automorphism is given (see also \cite{yu2020moduli}). If \(X\subset \mathbb{P}^5\) is a cubic fourfold, there is a Hodge decomposition \(H^4(X,\mathbb{C})=H^{3,1}(X)\oplus H^{2,2}(X)\oplus H^{1,3}(X)\) with \(h^{3,1}=1\). Much of the geometric information about the cubic fourfold is retained in the \textit{algebraic lattice} \(A(X)=H^4(X,\mathbb{Z})\cap H^{2,2}\) and in the \textit{transcendental lattice} \(T(X)=A(X)^\perp\). Every automorphism of a cubic fourfold induces an isometry on the \(H^4(X,\mathbb{Z})\), and automorphisms acting trivially on \( H^{3,1}(X)\) are called \textit{symplectic}, they are called \textit{non-symplectic} otherwise. 
A symplectic automorphism of a cubic fourfold $X$ induces a symplectic automorphism of the Fano variety of lines, $F(X)$, which is an irreducible holomorphic symplectic manifold \cite{Beauville.Donagi:variete.droites.hypersurf.cub.dim.4}.
Cubic fourfolds with a symplectic automorphism are studied in \cite{fu2016classification}, and groups of symplectic automorphisms are classified in \cite{Laza.Zheng:aut}. The algebraic and transcendental lattices of a cubic fourfold with an involution are determined in \cite{marquand2023cubic}, where the geometry of such cubic fourfolds is intensively studied. A similar approach is carried out in \cite{billigrossi2024non} to determine the algebraic and transcendental lattice, and describe the geometry of cubic fourfolds with a non-symplectic automorphism of higher order. A family of very symmetric cubic fourfolds with a symplectic automorphism of order three is considered in \cite{koike2022cubic}, where it is showed that such cubic fourfolds contain many planes that generate the algebraic lattice. 

The current paper determines the algebraic and transcendental lattices of a cubic fourfold with a symplectic automorphism of prime order greater than two, and  studies the geometry of such a cubic. This is a natural continuation of the mentioned series of works, and pursues the goal to have a completed geometrical and lattice theoretical picture. 
\begin{theorem}[{\autoref{algebraic_lattices}}]\label{algebraic_lattices}
    Let \(X\) be a general cubic fourfold among the ones admitting a symplectic automorphism \(\phi\) of prime order \(p \geq 3\). Then the Gram matrix of the lattice \(A(X)\) is described in \autoref{appendix_gram_matrices} and the lattice \(T(X)\) appears in \autoref{tab:order_p_sympl_cubic}. 
\end{theorem}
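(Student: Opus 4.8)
The plan is to exploit the explicit classification of cubic fourfolds carrying a symplectic automorphism of prime order, as given in \cite{GonzalezAguilera.Liendo:aut.prime.ord.smooth.cubic.n-folds} (see also \cite{fu2016classification,Laza.Zheng:aut}). For $p\ge 3$ the primes that occur are $p\in\{3,5,7,11\}$, and for each of them the classification produces a finite list of families, in each of which $\phi$ acts diagonally on the coordinates of $\mathbb{P}^5$ as $\operatorname{diag}(\zeta_p^{a_0},\dots,\zeta_p^{a_5})$, the defining cubic $f$ being $\phi$-semi-invariant; the automorphism is symplectic precisely when $\phi$ lifts to $\operatorname{SL}_6(\mathbb{C})$, i.e.\ $\sum_i a_i\equiv 0\pmod p$ after normalisation, since then $\phi$ fixes the holomorphic volume form and hence acts trivially on $H^{3,1}(X)$. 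Fixing such a family, I would prove the statement for its general member $X$ by first computing the $\phi^{*}$-action on $H^4(X,\mathbb{C})$, then identifying the invariant lattice $H^4(X,\mathbb{Z})^{\phi}$ and the co-invariant lattice $S_\phi(X):=\bigl(H^4(X,\mathbb{Z})^{\phi}\bigr)^{\perp}$, and finally recovering $A(X)$ and $T(X)$ from these.

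For the cohomological computation I would use the Griffiths residue description: with $R_f=\mathbb{C}[x_0,\dots,x_5]/J_f$ the Jacobian ring, one has $H^{3,1}(X)\cong (R_f)_0$, $H^{2,2}_{\mathrm{prim}}(X)\cong (R_f)_3$ and $H^{1,3}(X)\cong (R_f)_6$ as $\phi$-representations, with no twist since $\phi\in\operatorname{SL}_6$. As $H^{3,1}(X)$ and $H^{1,3}(X)$ are one-dimensional with $\phi^{*}$ trivial on them, and $h^2$ is $\phi$-invariant, every nontrivial $\phi^{*}$-eigenvector of $H^4(X,\mathbb{C})$ lies in $H^{2,2}_{\mathrm{prim}}(X)\cong(R_f)_3$, whose $\phi$-character is read off directly from the monomial basis and the relations $J_f$; this yields the rank and signature of $H^4(X,\mathbb{Z})^{\phi}$ and the fact that $S_\phi(X)$ is positive definite of rank $20-\dim(R_f)_3^{\phi}$. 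Since $\phi$ is symplectic, $S_\phi(X)\otimes\mathbb{C}\subseteq(H^{3,1}\oplus H^{1,3})^{\perp}=H^{2,2}$, so $S_\phi(X)\subseteq A(X)$, and as $h^2\in H^4(X,\mathbb{Z})^{\phi}$ one gets $\mathbb{Z}h^2\oplus S_\phi(X)\subseteq A(X)$, orthogonally. The point of taking $X$ general is to guarantee $H^4(X,\mathbb{Z})^{\phi}\cap H^{2,2}=\mathbb{Z}h^2$, so that $A(X)$ is a finite-index overlattice of $\mathbb{Z}h^2\oplus S_\phi(X)$ and, in particular, $T(X)=A(X)^{\perp}=\bigl(\mathbb{Z}h^2\oplus S_\phi(X)\bigr)^{\perp}=S_\phi(X)^{\perp_{L}}$ inside the primitive cohomology lattice $L=\langle h^2\rangle^{\perp}\cong A_2\oplus U^{\oplus 2}\oplus E_8^{\oplus 2}$. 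This genericity input is established by matching the dimension of the family in the classification with that of the period domain of the invariant polarized Hodge structure on $H^4(X,\mathbb{Z})^{\phi}$ (equivalently of $T(X)$).

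It remains to identify the lattices abstractly. The co-invariant lattice $S_\phi(X)$ is a positive definite $\mathbb{Z}[\zeta_p]$-lattice of the computed rank on which $\mathbb{Z}/p$ acts without nonzero fixed vectors; for $p$ prime this severely constrains its genus, and in the small ranks occurring here the genus is determined by rank and discriminant, so that $S_\phi(X)$ is pinned down --- one can cross-check against the co-invariant lattice of the induced symplectic automorphism of the Fano variety of lines $F(X)$, a manifold of $K3^{[2]}$ type \cite{Beauville.Donagi:variete.droites.hypersurf.cub.dim.4}, for which prime-order symplectic co-invariant lattices are classified. Knowing $S_\phi(X)$, the transcendental lattice $T(X)=S_\phi(X)^{\perp_{L}}$ has signature $(21-\rho,2)$ with $\rho=1+\operatorname{rk}S_\phi(X)$ and, by unimodularity of $H^4(X,\mathbb{Z})$, discriminant form $-q_{\langle 3\rangle}-q_{S_\phi(X)}$; for $p\in\{3,5,7\}$ its rank is large enough that Nikulin's existence-and-uniqueness criterion identifies $T(X)$ with a single named lattice, recorded in \autoref{tab:order_p_sympl_cubic}. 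The case $p=11$, where $\operatorname{rk}T(X)=2$ and $X$ is rigid (the Hodge-maximal cubic with an order eleven symmetry), is treated directly. Finally, the Gram matrices in \autoref{appendix_gram_matrices} are produced by choosing a $\mathbb{Z}$-basis of $A(X)$: the standard basis of $\mathbb{Z}h^2\oplus S_\phi(X)$ together with any glue vectors realising the overlattice, or, alternatively, a basis of classes of explicit algebraic cycles (orbits of planes under $\phi$, in the spirit of \cite{koike2022cubic}), followed by computing intersection numbers.

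The main obstacle lies in two places. First, the precise identification of each co-invariant lattice $S_\phi(X)$: the $\mathbb{Z}[\zeta_p]$-module structure and the discriminant narrow down the genus, but a priori several isometry classes can share rank and discriminant form, so one must either invoke the classification of symplectic co-invariant lattices on $K3^{[2]}$-type manifolds carefully or rule out competitors through an explicit geometric construction of the relevant cycles. Second, the genericity statement $H^4(X,\mathbb{Z})^{\phi}\cap H^{2,2}=\mathbb{Z}h^2$, which rests on an exact moduli/period count for each family; once these are in hand, the passage to $T(X)$ via discriminant forms, the determination of any overlattice in $A(X)$, and the extraction of the explicit Gram matrices are essentially routine, with $p=11$ the only case requiring a separate, hands-on treatment.
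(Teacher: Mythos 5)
Your overall skeleton (classify the families, identify the coinvariant lattice $S_\phi(X)=A_{prim}(X)$ for general $X$, then recover $A(X)$ and $T(X)$) matches the paper's, and your use of Leech-pair/$K3^{[2]}$ classifications to pin down $S_\phi(X)$ is essentially what the paper does via \cite{Laza.Zheng:aut} and \cite{Hoehn.Mason:290.fixed.Leech}. But there is a genuine gap at the step you declare ``essentially routine'': deciding whether $A(X)$ equals $\langle \eta_X\rangle\oplus A_{prim}(X)$ or is a proper (index $3$) overlattice of it, and if so which one. This is the actual content of the paper's proof. Knowing $S_\phi(X)$ abstractly does \emph{not} settle it: the families $F_3^3$ and $F_3^6$ have \emph{isometric} coinvariant lattices (the Coxeter--Todd lattice $K_{12}$), yet one has $A(X)$ a proper overlattice and the other does not. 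The paper resolves this with (a) a criterion (\autoref{lem:Z3_discrim}, \autoref{lem:d_cong_2_mod_6}) saying gluing to $\langle\eta_X\rangle$ occurs iff $D(A_{prim}(X))$ contains a $\mathbb{Z}/3\mathbb{Z}$ of discriminant form $-1/3$, iff $X\in\mathcal{C}_d$ with $d\equiv 2\ (6)$ --- which disposes of $p\neq 3$ since there $A_{prim}(X)$ is $p$-elementary; (b) genuine geometric input for $p=3$: the existence of planes coming from Eckardt points puts $X_3^3$ and $X_3^4$ in $\mathcal{C}_8$ and forces the gluing (for $\phi_3^4$ one can alternatively use the length bound $l(D(A(X)))\leq \rank T(X)=4 < 5 = l(D(A_{prim}))$), while $X_3^6$ is then the unglued case; and (c) the period-map root conditions (\autoref{rmk:roots}) to single out the correct index-$3$ overlattice among the candidates, by excluding those containing vectors of square $1$. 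None of (b) or (c) appears in your proposal, and (a) only implicitly.

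This gap propagates to your formula for the discriminant form of $T(X)$: you assert it is $-q_{\langle 3\rangle}-q_{S_\phi(X)}$ by unimodularity, but that holds only when $\langle h^2\rangle\oplus S_\phi(X)$ is primitive in $H^4(X,\mathbb{Z})$; in the glued cases ($\phi_3^3$, $\phi_3^4$) one has $D(T(X))\cong D(A(X))$, which is smaller by a factor $9$, so your formula returns the wrong genus for $T(X)$ there. A smaller inaccuracy: you claim Nikulin's uniqueness criterion identifies $T(X)$ for all $p\in\{3,5,7\}$; in fact it fails for $\phi_3^4$ and $\phi_7^1$ (where $\rank T(X)=4$ is not at least $l_q+2$), and the paper needs separate genus/representative checks there as well as for $p=11$.
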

Moreover, starting from the structure of the algebraic lattice \(A(X)\) of a cubic fourfold with a symplectic automorphism of prime order at least three, we prove that it is generated by the square of the hyperplane class and by classes of cubic scrolls \autoref{cubics_with_scrolls} or planes \autoref{cubics_with_planes}. 
We obtain the following result.
\begin{corollary}[{\autoref{generated_by_scrolls}, \autoref{eckardt_points_in_phi_3^3}, \autoref{eckardt_points_in_phi_3^4}}]
    Let $X$ be a general cubic fourfold among the ones admitting a symplectic automorphism $\phi$ of prime order $p\geq 3$. Then the algebraic lattice $A(X)$ is generated by the square of the hyperplane class, along with either classes of cubic scrolls or planes.
\end{corollary}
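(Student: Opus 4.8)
The plan is to reduce to a finite, family-by-family verification and then assemble. By the classification of symplectic automorphisms of prime order \(p\geq 3\) on cubic fourfolds (recalled from the references in the introduction), a general such cubic fourfold \(X\) together with its automorphism \(\phi\) falls into one of finitely many families, indexed by \(p\in\{3,5,7,11\}\) and, for \(p=3\), by the conjugacy class of \(\phi\) in \(\mathrm{PGL}_6(\mathbb{C})\). For each family \autoref{algebraic_lattices} provides the Gram matrix of \(A(X)\) explicitly (see \autoref{appendix_gram_matrices}), in particular its rank \(\rho\) and its discriminant. Since classes of planes and of cubic scrolls contained in \(X\) are algebraic, they lie in \(A(X)=H^4(X,\mathbb{Z})\cap H^{2,2}\), as does \(h^2\); so it suffices to exhibit, for each family, \(\rho-1\) such classes which together with \(h^2\) span a full-rank sublattice \(L\subseteq A(X)\) with \(\mathrm{disc}(L)=\mathrm{disc}(A(X))\), for then \(L=A(X)\).

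First I would treat the families where the generators are cubic scrolls, which is the content of \autoref{generated_by_scrolls}. Diagonalizing the \(\phi\)-action on \(\mathbb{P}^5\) and using the defining equation of the general member, one locates \(\phi\)-invariant linear subspaces and \(\phi\)-stable subvarieties of \(X\) that are, or that yield by projection, cubic scrolls \(S\subset X\) (degree-three rational scrolls). Their classes in \(H^4(X,\mathbb{Z})\) are then pinned down by intersection theory on the cubic fourfold: \([S]\cdot h^2=\deg S=3\), the self-intersection \([S]^2=7\) is the standard value for a cubic scroll in a cubic fourfold, and the numbers \([S_i]\cdot[S_j]\) are read off from how the scrolls meet in \(\mathbb{P}^5\) (using excess intersection where they are not transverse). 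One forms the Gram matrix of \(L=\langle h^2,[S_1],\dots\rangle\) and checks against \autoref{appendix_gram_matrices} that the discriminants agree; full rank plus equal discriminant gives \(L=A(X)\).

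For the two remaining families, those with \(p=3\) and \(\phi\) of the special type treated in \autoref{eckardt_points_in_phi_3^3} and \autoref{eckardt_points_in_phi_3^4}, cubic scrolls are replaced by planes. Here the geometry is organized by Eckardt points: at such a point \(x\in X\) the tangent hyperplane section \(T_xX\cap X\) is a cone with vertex \(x\), and this cone structure produces planes \(\Pi\subset X\) through \(x\). I would enumerate the Eckardt points compatible with \(\phi\) (equivalently, fixed points of involutions commuting with \(\phi\)) and the planes through them, organize them into \(\phi\)-orbits, compute the plane classes from \([\Pi]\cdot h^2=1\), \([\Pi]^2=3\) and the intersection numbers \([\Pi_i]\cdot[\Pi_j]\) (governed by whether the planes are disjoint or meet, again with excess intersection where needed), and verify once more that \(\langle h^2,[\Pi_1],\dots\rangle\) has the discriminant of \(A(X)\) predicted by \autoref{algebraic_lattices}. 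Since every family falls under one of these two headings, combining the two cases proves the corollary.

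The main obstacle is the geometric input rather than the lattice bookkeeping. For each family one must actually produce \(\rho-1\) cubic scrolls (resp.\ planes) on the general member and control their mutual position and their behaviour under \(\phi\); this requires a careful study of the fixed locus of \(\phi\) on \(X\), of the \(\phi\)-invariant linear subspaces of \(\mathbb{P}^5\), and, in the \(p=3\) cases, of the Eckardt locus, all from the explicit equations. A priori the first natural batch of scroll or plane classes might span only a proper finite-index sublattice of \(A(X)\), forcing one to exhibit further surfaces or to argue primitivity of a known sub-configuration; pinning down a generating set of the right index is the delicate point. Once the classes are in hand, matching Gram matrices against \autoref{appendix_gram_matrices} is routine linear algebra.
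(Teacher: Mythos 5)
Your high-level reduction is the same as the paper's: split the families into those handled by cubic scrolls ($F_3^6,F_5^1,F_7^1,F_{11}^1$, i.e. \autoref{generated_by_scrolls}) and those handled by planes ($F_3^3,F_3^4$, i.e. \autoref{eckardt_points_in_phi_3^3} and \autoref{eckardt_points_in_phi_3^4}), exhibit the surfaces, and show their classes together with $\eta_X$ generate $A(X)$. The gap is in how you propose to verify generation. You plan to compute the classes of geometrically constructed scrolls or planes via their pairwise intersection numbers (with excess intersection), assemble the Gram matrix of the sublattice they span, and compare discriminants with $A(X)$. You yourself flag that the configuration might only span a finite-index sublattice and call pinning down the right index ``the delicate point'' --- but you do not resolve it, and as stated the plan is not realistically executable: there are $k=378,1320,2709,6270$ pairs of scroll families and $81$ or $243$ planes, and controlling all mutual positions and excess contributions from the explicit equations is out of reach.

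The paper closes this step by running the logic in the opposite direction, and this is the idea your proposal is missing. Since $A(X)$ is already known as an abstract lattice from \autoref{algebraic_lattices}, one enumerates by computer the vectors in $A(X)$ with the numerical invariants that a plane class must have ($\alpha^2=3$, $\alpha\cdot\eta_X=1$), respectively that a cubic scroll class must have (via the Abel--Jacobi map these correspond to extremal rational curves of square $-10$ and divisibility $2$ on $F(X)$, i.e. to square-$4$ vectors of $A_{prim}(X)$, coming in pairs with $[T]+[T^\vee]=2\eta_X$), and checks directly that these vectors generate $A(X)$. The geometric input (Eckardt points and the cones over cubic surfaces for $F_3^3$ and $F_3^4$; the correspondence with extremal rays of $F(X)$ from \cite{brooke2025lines} for the scroll cases) is used only to show that each such lattice class is actually represented by a surface on $X$, by matching the geometric count ($81$, $243$, or $2k$) with the lattice count. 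No pairwise intersection number of two surfaces is ever computed. If you want to salvage your version, you would need to replace the Gram-matrix-of-geometric-classes computation by exactly this enumeration of numerically admissible vectors inside the known lattice $A(X)$.
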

The same phenomenon is already known to happen for any automorphism of order \(p=2\) by \cite{marquand2023cubic} (with exception of one case where \(\mathbb{Q}\)-coefficients are needed) and for non-symplectic automorphisms of order \(p\geq 3\) by \cite{billigrossi2024non}. 

One of the most challenging and intriguing problems about cubic fourfolds is to determine whether they are rational or not. It is conjectured that a very general cubic fourfold is not rational, but no examples of not rational cubic fourfolds are known. Moreover in \cite{hassett2000special,kuz2016derived} it is conjectured that a cubic fourfold is rational if and only if it has an associated K3 surface (i.e. its transcendental lattice is isometric to the transcendental lattice of a K3 surface). It was proved in \cite{Ouchi:Automorphisms.cubic.k3.category} (see also \autoref{Ouchi}) that a cubic fourfold with a symplectic automorphism of prime order \(p\geq 3\) has an associated K3 surface. As a consequence of our previous theorem, we prove the following result.
\begin{corollary}[{\autoref{rationality_symplectic}}]
     Let \(X\) be a cubic fourfold admitting a symplectic automorphism of prime order \(p\geq 3\), then \(X\) is rational.
\end{corollary}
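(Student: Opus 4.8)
The plan is to prove rationality by showing that every cubic fourfold $X$ admitting a symplectic automorphism $\phi$ of prime order $p\ge 3$ lies in one of the Hassett divisors $\mathcal C_{14}$, $\mathcal C_{26}$, $\mathcal C_{38}$, $\mathcal C_{42}$, each of whose smooth members is rational (by Beauville--Donagi and Bolognesi--Russo--Staglian\`o for $\mathcal C_{14}$, and by Russo--Staglian\`o for $\mathcal C_{26}$, $\mathcal C_{38}$, $\mathcal C_{42}$). By \autoref{Ouchi} such an $X$ already has an associated K3 surface, hence lies in $\mathcal C_d$ for some admissible $d$; the real content is to pin $d$ down to one of these four values using the explicit lattices.

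First I would invoke \autoref{algebraic_lattices}: for $X$ general among the cubics with a symplectic $\phi$ of a fixed prime order $p\in\{3,5,7,11\}$ and a fixed conjugacy type of action — there are finitely many such families — the lattice $A(X)=H^4(X,\mathbb Z)^{\phi}$ is given by the Gram matrices in \autoref{appendix_gram_matrices}, and is generated over $\mathbb Z$ by $h^2$ together with either classes of cubic scrolls (\autoref{cubics_with_scrolls}) or classes of planes (\autoref{cubics_with_planes}), the latter in the families carrying Eckardt points. For an arbitrary $X$ admitting such a $\phi$ one has $A(X)\supseteq H^4(X,\mathbb Z)^{\phi}$, so it suffices to treat these finitely many fixed lattices.

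Next I would produce, inside each of these lattices, a primitive rank-two sublattice $K$ with $h^2\in K$ and $\operatorname{disc}(K)=d\in\{14,26,38,42\}$, via the two cases of the corollary. When $A(X)$ is generated by $h^2$ and classes of planes one expects $X$ to contain two disjoint planes $P_1,P_2$ (two general planes in $\mathbb P^5$ are disjoint, and these families contain many planes): then $(P_1+P_2)^2=6$ and $h^2\cdot(P_1+P_2)=2$, so $\langle h^2,\,P_1+P_2\rangle$ has discriminant $14$. When $A(X)$ is generated by $h^2$ and classes of cubic scrolls, a single scroll $\Sigma$ gives only $\langle h^2,\Sigma\rangle$ of discriminant $12$ (one has $\Sigma^2=7$, $h^2\cdot\Sigma=3$), which does not suffice; but $A(X)$ has rank larger than two, and a finite computation against the Gram matrices of \autoref{appendix_gram_matrices} should produce the required $K$ from suitable combinations of scroll classes. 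In every case, since $14$, $26$, $38$, $42$ are squarefree, the saturation of $K$ in $H^4(X,\mathbb Z)$ has discriminant $d/m^2$ for a positive integer $m$, forcing $m=1$; hence $K$ is automatically primitive and $X\in\mathcal C_d$.

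Finally, $\mathcal C_d$ is irreducible and its smooth members are all rational by the results quoted above — or, failing a statement for every member, rationality of the general member propagates to $X$ along a smooth one-parameter specialization by Kontsevich--Tschinkel — so $X$ is rational. The main obstacle is the third step: one must check, family by family and especially for the ``plane'' families (whose algebraic lattices have a shape quite different from the rest), that the lattice genuinely realizes one of the discriminants $14$, $26$, $38$, $42$ for which unconditional rationality is available, and not merely values such as $8$, $12$, $20$ or $62$ for which no rationality result is known. The associated K3 surface guarantees membership in \emph{some} admissible $\mathcal C_d$; excluding the ``bad'' $d$ requires the explicit Gram matrices.
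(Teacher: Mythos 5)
Your strategy is essentially the paper's: reduce to the explicit lattices of the general member, use two disjoint planes to place the families $F_3^3$ and $F_3^4$ in $\mathcal{C}_{14}$, and use a lattice computation in the remaining (scroll) families to land in a Hassett divisor with an unconditional rationality theorem. The one step you defer --- ``a finite computation should produce the required $K$'' --- is exactly where the paper does the work: it exhibits in each $A_{prim}(X)$ a rank-three sublattice with Gram matrix $\left(\begin{smallmatrix}4&1&0\\1&4&0\\0&0&4\end{smallmatrix}\right)$ whose generators sum to a vector $v$ with $v^2=14$, so that $\langle\eta_X,v\rangle$ has discriminant $42$ and $X\in\mathcal{C}_{42}$ (rational by Russo--Staglian\`o); your squarefree argument for primitivity is a valid substitute for checking it directly. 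One slip to fix: for a symplectic $\phi$ the algebraic lattice contains the \emph{coinvariant} lattice $H^4_{prim}(X,\mathbb{Z})_\phi$, not the invariant lattice $H^4(X,\mathbb{Z})^{\phi}$ (which contains $T(X)$); the reduction from arbitrary to general $X$ goes through the inclusion $H^4_{prim}(X,\mathbb{Z})_\phi\subseteq A_{prim}(X)$.
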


Due to the presence of a group action, one can investigate an equivariant version of rationality and these conjectures, i.e. the $G$-rationality for $G\subset\Aut(X)$. 
More precisely, if \(G\subseteq\Aut(X)\) is a group of automorphisms of a cubic fourfold \(X\), then \(X\) is said to be \(G\)\textit{-rational} (or \(G\)-\textit{linearizable}) if \(G \subseteq PGL(5)\) is linear, and \(X\) admits a \(G\)-equivariant birational map to  \(\mathbb{P}^{4}\). Moreover in \cite[Theorem 1]{Bohning.Bothmer.Tschinkel:Equivariant.birational.geometry} the authors provide an example of a rational cubic fourfold that is not \(G\)-rational, where \(G\) is its group of symplectic automorphisms. 
We provide two families of symmetric cubics that are rational, but not $G$-rational:

\begin{theorem}[{\autoref{G_rationality}}]
    Let \(X\) be a cubic fourfold with a symplectic automorphism of order three of type \(\phi_3^3\) or \(\phi_3^4\) (see notation in \autoref{class_cubiche}) and let \(G=\Aut(X)\). Then \(X\) is rational, but not \(G\)-rational.
\end{theorem}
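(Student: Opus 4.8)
The rationality of $X$ is immediate from \autoref{rationality_symplectic}: a cubic fourfold of type $\phi_3^3$ or $\phi_3^4$ carries a symplectic automorphism of order $3\geq 3$, hence is rational. The whole content of the theorem is therefore the \emph{failure} of $G$-rationality, and the plan is to produce an equivariant birational obstruction. The organizing observation is that a $G$-equivariant birational map $X\dashrightarrow\mathbb{P}^{4}$ is a fortiori $H$-equivariant for every subgroup $H\subseteq G$; so it suffices to pin down $G=\Aut(X)$ and then exhibit one conveniently small subgroup $H\subseteq G$ for which $X$ is provably not $H$-rational.

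First I would determine $G=\Aut(X)$ for a general member of each of the two families. Starting from the normal forms recorded in \autoref{class_cubiche} for $\phi_3^3$ and $\phi_3^4$, one computes the subgroup of $\mathrm{PGL}(6)$ stabilizing the defining polynomial; for the general member this is a finite group, which one identifies explicitly as an extension of the prescribed order-three symplectic automorphism by the extra symmetries forced by the very special shape of the equation (coordinate permutations, diagonal torsions, and the non-symplectic symmetries responsible for the Eckardt points studied in \autoref{eckardt_points_in_phi_3^3} and \autoref{eckardt_points_in_phi_3^4}). From this group I would then select a subgroup $H$ — in practice an elementary abelian $p$-group or a metabelian group of small order — chosen to make both sides of the comparison below tractable.

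To obstruct $H$-rationality I would compute the class of $X$ with its $H$-action inside the equivariant Burnside group $\mathrm{Burn}_{4}(H)$, which is an $H$-birational invariant; this is the type of argument behind \cite{Bohning.Bothmer.Tschinkel:Equivariant.birational.geometry}, whose example realizes exactly this phenomenon, and one should check whether that argument applies to the present families essentially verbatim. Concretely: (i) run an $H$-equivariant resolution of $X$ — since $X$ is already smooth this only means blowing up along the $H$-fixed strata and their incidences; (ii) for each $H$-orbit of subvarieties with nontrivial stabilizer, read off the triple given by the stabilizer, the character by which it acts on the function field of the stratum, and the weights on the normal bundle, the relevant input being the fixed loci of the order-three elements together with their $H$-translates and the Eckardt points; (iii) assemble these into the Burnside class, keeping track of the blow-up and conjugation relations and, crucially, of which divisorial symbols are \emph{incompressible}; (iv) on the other side, enumerate the finitely many faithful four-dimensional projective representations of $H$ and compute the classes of $\mathbb{P}(V)$, whose strata are unions of linear subspaces and hence entirely explicit; (v) conclude that no linear model matches.

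The main obstacle is steps (iii)--(v): the Burnside-class bookkeeping. One must control the $H$-equivariant geometry of the fixed loci on $X$ (their components, their rationality, and above all the normal-bundle weights) precisely enough to distinguish the class of $X$ from every linear model on $\mathbb{P}^{4}$ — a single accidental coincidence of weights would break the argument — and one must verify the incompressibility of the symbols that do the separating, since compressible symbols are killed by the defining relations of $\mathrm{Burn}_{4}(H)$. A secondary difficulty is the explicit computation of $\Aut(X)$ for the general member and the choice of $H$ that keeps the representation-enumeration side small. If one prefers to sidestep the full Burnside machinery for a well-chosen abelian $H$, an alternative is a direct fixed-point obstruction in the style of Reichstein--Youssin: compare the characters of $H$ on the tangent spaces at the isolated fixed points and along the fixed curves and surfaces of $X$ with those forced by a linear action on $\mathbb{P}^{4}$, where I would expect the Eckardt-point strata of \autoref{eckardt_points_in_phi_3^3} and \autoref{eckardt_points_in_phi_3^4} to supply the discrepancy.
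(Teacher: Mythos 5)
Your reduction of the first half to \autoref{rationality_symplectic} matches the paper, and your overall framework (obstructing equivariant rationality via a class in the equivariant Burnside group, as in \cite{Bohning.Bothmer.Tschinkel:Equivariant.birational.geometry}) is the right one. However, as written your plan has a genuine gap: you never identify \emph{which} symbol does the separating, and the machinery you propose to deploy instead --- computing the full Burnside class of $X$, enumerating all faithful linear actions of a subgroup $H$ on $\mathbb{P}^4$, and matching normal-bundle weights while checking incompressibility --- is not a proof but an open-ended computation, which you yourself concede could be defeated by ``a single accidental coincidence of weights.'' The weights play no role in the actual argument.

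The decisive observation you are missing is this. Both families carry an \emph{Eckardt involution} (anti-symplectic, of the form $x_5\mapsto -x_5$ after a coordinate change; see \autoref{eckardt_points_in_phi_3^3} and \autoref{eckardt_points_in_phi_3^4}), whose fixed locus is an isolated Eckardt point together with a \emph{smooth cubic threefold} $F\subset X$, pointwise fixed. This divisorial stratum contributes a symbol to $[X\righttoleftarrow G]\in\Burn_4(G)$ whose residue field is $k(F)$, and $F$ is irrational by Clemens--Griffiths. On the other side, by \cite[Corollary 6.1]{tschinkel2023equivariantbirationalgeometrylinear} every stratum occurring in $[\mathbb{P}^4\righttoleftarrow G]$ for a linear action is birational to a product of projective spaces, hence rational; and the blow-up relation --- the only relation in $\Burn_4(G)$ mixing transcendence degrees --- only replaces a field $K$ by $K$ or $K(t)$, so it preserves rationality and can never cancel an irrational symbol against rational ones. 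Hence the symbol carried by $k(F)$ survives and $[X\righttoleftarrow G]\neq[\mathbb{P}^4\righttoleftarrow G]$, with no need to determine $\Aut(X)$ precisely, choose a clever subgroup, enumerate linear models, or compute any weights. Your remark that one should track ``the components, their rationality'' of the fixed loci gestures in this direction, but without isolating the irrational cubic threefold and the rationality-preservation of the blow-up relation, the proposal does not close.
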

In a different vein, one can construct many examples of irreducible holomorphic symplectic (IHS) manifolds from a cubic fourfold $X$, and study birational transformations that are induced from the cubic.
 In particular, the families parametrizing \(1\)-cycles of degree zero and degree one on hyperplane sections of a cubic fourfold admit a compactification (see \cite{LSV,Voisin:compactification.twisted.2016hyper,Sacca2020birational}) which are IHS manifolds deformation equivalent to the O'Grady ten-dimensional sporadic example \cite{OGrady:desingularized.moduli.spaces}, i.e. it is an IHS manifold of \textit{OG10 type}. If \(X\) is a cubic fourfold, we refer to these constructions as \(J(X)\) a \textit{Laza-Saccà-Voisin (LSV) manifold} (the case of cycles of degree zero) and \(J^t(X)\) a \textit{twisted Laza-Saccà-Voisin (twisted LSV) manifold} (the case of cycles of degree one). 

An automorphism of a cubic fourfold induces a birational transformation of the corresponding OG10 type manifold. Symplectic birational transformations of IHS manifolds of OG10 type are classified in \cite{marquand2023classificationsymplecticbirationalinvolutions,marquand2024finitegroupssymplecticbirational} lattice-theoretically, and the authors classify those groups that are induced from a group of automorphisms of a cubic fourfold (see \cite[Theorem 6.1]{marquand2024finitegroupssymplecticbirational}). Further, in \cite[\S 6]{Felisetti.Giovenzana.Grossi} the authors give characterization of symplectic birational transformations the ones that are induced from a K3 surface. As an application of our results, we determine the Neron-Severi and transcendental lattices for a (twisted) LSV manifold associated to a cubic fourfold with a symplectic automorphism of prime order, strengthening the result of \cite[Theorem 6.1]{marquand2024finitegroupssymplecticbirational} in the prime order case.

\begin{theorem}[{\autoref{induced_action_LSV}}]
    Let \(X\) be a general cubic fourfold among the ones admitting a symplectic automorphism \(\phi\in\Aut(X)\) of prime order, then the Néron-Severi lattice and the transcendental lattice of the manifolds \(J(X)\) and \(J^t(X)\) are in \autoref{tab:order_p_sympl_LSV}.
\end{theorem}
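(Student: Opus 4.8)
The plan is to reduce the computation to the lattice‑theoretic data of $X$ furnished by \autoref{algebraic_lattices} (and by \cite{marquand2023cubic} when $p=2$), and then to transport it to the OG10 side along the Hodge‑theoretic comparison between a cubic fourfold and its intermediate Jacobian fibration. Fix $X$ general in one of the families of \autoref{class_cubiche}; then $A(X)$ and $T(X)=A(X)^{\perp}$ are known, and putting $A_0(X):=A(X)\cap (h^2)^{\perp}$ we get that $A_0(X)(-1)$ and $T(X)(-1)$ are complementary primitive sublattices of the even lattice $H^4_{\mathrm{prim}}(X,\mathbb Z)(-1)\cong U^2\oplus E_8(-1)^2\oplus A_2(-1)$. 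For the untwisted LSV manifold the Lagrangian fibration $J(X)\to\mathbb P^5$ admits a section, and by \cite{LSV,Sacca2020birational} there is a Hodge isometry
\[
H^2(J(X),\mathbb Z)\;\cong\;U\oplus H^4_{\mathrm{prim}}(X,\mathbb Z)(-1),
\]
where the hyperbolic summand is generated by the fiber class and the section, hence of type $(1,1)$, and the holomorphic symplectic form of $J(X)$ corresponds to a generator of $H^{3,1}(X)$. Since, for $X$ general in its family, $A(X)$ is exactly the $(2,2)$‑part of $H^4(X,\mathbb Z)$, this gives at once
\[
\mathrm{NS}(J(X))\;\cong\;U\oplus A_0(X)(-1),\qquad T(J(X))\;\cong\;T(X)(-1);
\]
moreover the induced action of $\phi$ on $H^2(J(X),\mathbb Z)$ is the one it induces on $H^4_{\mathrm{prim}}(X,\mathbb Z)$, extended by the identity on the $U$‑summand, because $\phi$ preserves the fibration and its zero section.

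For the twisted manifold $J^t(X)$ I would argue in the same way, using the twisted fibration $J^t(X)\to\mathbb P^5$ of \cite{Voisin:compactification.twisted.2016hyper,Sacca2020birational}: it has no section but only a multisection, so the integral comparison with $H^4_{\mathrm{prim}}(X,\mathbb Z)(-1)$ differs from the untwisted one. The transcendental Hodge structure is still the one carried by $X$, so $T(J^t(X))$ is again of the form $T(X)(-1)$, possibly modified along the order three part of the discriminant group of the OG10 lattice $\Lambda=U^3\oplus E_8(-1)^2\oplus A_2(-1)$, and $\mathrm{NS}(J^t(X))$ is its orthogonal complement in $\Lambda$. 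Determining precisely this modification --- equivalently, the primitive embedding $T(J^t(X))\hookrightarrow\Lambda$ and hence its complement --- is the one genuinely new point compared with the untwisted case; I would extract it from the geometry of the multisection together with Nikulin's existence and uniqueness results for primitive embeddings, noting that whenever the embedding $T(X)(-1)\hookrightarrow\Lambda$ is unique (which holds as soon as the rank and length of $T(X)(-1)$ are small enough relative to its complement) one simply has $\mathrm{NS}(J^t(X))\cong\mathrm{NS}(J(X))$.

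With the abstract lattices in hand, it remains to bring the Gram matrices of $U\oplus A_0(X)(-1)$ and $T(X)(-1)$, together with their twisted analogues, into the normal forms of \autoref{tab:order_p_sympl_LSV}. This is a finite check over the families of \autoref{class_cubiche}: for each one reads off $A_0(X)$ and $T(X)$ from \autoref{appendix_gram_matrices} and \autoref{tab:order_p_sympl_cubic}, tensors with $\mathbb Q$ to fix the rational isometry type, computes the discriminant form (which, since the two lattices are complementary in $\Lambda$ and $q_\Lambda$ is the discriminant form of $A_2(-1)$, is determined together with the signature by Nikulin's theory), and then distinguishes the finitely many lattices in the resulting genus. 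Because the Néron--Severi lattice, the transcendental lattice and the weight two Hodge structure are birational invariants of irreducible holomorphic symplectic manifolds, the statement is insensitive to whether the symplectic automorphism of $X$ induces a biregular or only a birational transformation of $J(X)$ and $J^t(X)$, so the output slots directly into the cohomological classification of \cite{marquand2024finitegroupssymplecticbirational}.

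The hard part will be the twisted case: making the integral comparison for $H^2(J^t(X),\mathbb Z)$ precise enough to pin down the gluing, and, in the families with the largest algebraic lattice --- for instance the order eleven family and the order three family of type $\phi_3^4$ --- recognising the resulting lattice of rank at most $22$ in closed form; one must also keep track of which families have a general member containing a plane, where the very existence of $J(X)$ as an irreducible holomorphic symplectic manifold relies on \cite{Sacca2020birational} or on a specialisation argument. A useful independent check is provided by \autoref{Ouchi}: since $X$ carries an associated K3 surface $S$ with an induced symplectic automorphism, $T(X)(-1)\cong T(S)$ up to sign, which recovers the transcendental columns of \autoref{tab:order_p_sympl_LSV} and, through \cite{Felisetti.Giovenzana.Grossi}, exhibits the induced birational transformations as induced from a K3 surface.
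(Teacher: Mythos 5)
Your treatment of the untwisted case is essentially the paper's: the integral Hodge isometry $H^2(J(X),\mathbb{Z})\cong \bU_X\oplus H^4_{prim}(X,\mathbb{Z})(-1)$ with $\bU_X\cong\bU$ of type $(1,1)$ (spanned by $L$ and the relative theta divisor $\Theta$, not literally by a section class, but this is cosmetic) immediately gives $\NS(J(X))=\bU\oplus A_{prim}(X)(-1)$ and $T(J(X))=T(X)(-1)$ for $X$ general, which matches \autoref{inv_to_inv}.

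The genuine gap is the twisted case, and your proposed fallback there is not just incomplete but points in the wrong direction. You suggest that whenever the primitive embedding $T(X)(-1)\hookrightarrow\Lambda$ is unique one gets $\NS(J^t(X))\cong\NS(J(X))$; but \autoref{tab:order_p_sympl_LSV} shows these lattices differ in every case except $\phi_3^3$ and $\phi_3^4$ (e.g.\ $\bU\oplus\bE_8(-2)$ versus $\bU(3)\oplus\bE_8(-2)$ for $p=2$, which have different discriminants), so the embedding is precisely \emph{not} unique in those cases and this route cannot close the argument. The missing structural input is that $\bU_X^t=\langle L^t,\Theta^t\rangle\cong\bU(3)$ with $(\bU_X^t)^\perp\cong H^4_{prim}(X,\mathbb{Z})(-1)$, so that $\bU(3)\oplus A_{prim}(X)(-1)\subseteq\NS(J^t(X))$ and the only question is whether this embedding is saturated. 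Since $\bU(3)$ glues to its orthogonal complement in $\Lambda$ along a $\mathbb{Z}/3\mathbb{Z}$, one must decide whether that glue vector lands in $A_{prim}(X)$ or in $T(X)$; by \autoref{lem:Z3_discrim} and \autoref{lem:d_cong_2_mod_6} it lands in $A_{prim}(X)$ exactly when $X\in\mathcal{C}_d$ for some $d\equiv 2\,(6)$, i.e.\ for $\phi_3^3$ and $\phi_3^4$ (the plane cases), where $\NS(J^t(X))$ is the index-$3$ overlattice and coincides with $\NS(J(X))$; in all other cases the sum is already primitive and $\NS(J^t(X))=\bU(3)\oplus A_{prim}(X)(-1)$. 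Without this dichotomy your computation cannot produce the $\bU$ versus $\bU(3)$ distinction in the table. (Your hedge that $T(J^t(X))$ might be ``modified'' is also unnecessary: $T(X)(-1)$ sits primitively in $(\bU_X^t)^\perp$ and hence in $H^2(J^t(X),\mathbb{Z})$, so $T(J^t(X))\cong T(X)(-1)$ on the nose.) The final step — recognizing each genus by a unique representative via Nikulin — is as in the paper.
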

\begin{remark}
    According to the theoretical obstruction proved in \cite[Theorem 1.1]{Giovenzana.Grossi.Onorati.Veniani} any non-trivial symplectic birational automorphism does not extend to a regular automorphism on an IHS manifold of OG10 type. By \cite[Proposition 3.11]{Sacca2020birational} the Lagrangian fibration of a LSV manifold will have a reducible fiber. We prove that any cubic fourfold with a symplectic automorphism contains planes or cubic scrolls, impling the existence of a reducible fiber by \cite{Brosnan} and \cite{marquand2024defectcubicthreefold}. 
\end{remark}

\subsection*{Outline}
In \autoref{preliminaries} we collect preliminaries about lattices and cubic fourfolds. In  \autoref{algebraic_lattice_section} we compute the lattices \(A(X)\) and \(T(X)\) for a general cubic fourfold \(X\) with a prime order symplectic automorphism. In  \autoref{cubics_with_scrolls} and  \autoref{cubics_with_planes} we study cubic fourfolds with a prime order symplectic automorphisms for which the lattice \(A(X)\) is generated by the square of a hyperplane class along with cubic scrolls or planes, respectively.
In  \autoref{G_rationality_section} we prove that if \(X\) is a cubic fourfold with an automorphism belonging to one of two particular families, then \(X\) is rational but it is not \(\Aut(X)\)-equivariantly rational. In  \autoref{LSV_birationalities} we determine the cohomological action of prime order symplectic automorphisms on manifolds of OG10 type that are induced by automorphisms of cubic fourfolds via the LSV constructions. In \autoref{appendix_gram_matrices} we display the matrices of the lattices \(A(X)\).
\subsection*{Acknowledgments}
We would like to thank Stevell Muller for discussions about lattice computations and for pointing out useful functions of \cite{OSCAR}. We also would like to thank Yuri Tschinkel for suggesting that the general cubic fourfold with an Eckardt involution is not linearizable, and Howard Nuer for valuable discussions on arrangements of planes. We are grateful to the referee for useful comments that improved this article, in particular for suggesting parts of the general \autoref{lem:Z3_discrim}, \autoref{lem:d_cong_2_mod_6}, and their proofs, and also many properties of the lattices in \autoref{appendix_gram_matrices}.

 \section{Preliminaries}\label{preliminaries}

\subsection{Lattices}A \textit{lattice} \(L\) is a free \(\mathbb{Z}\)-module of finite rank, with an integral symmetric bilinear form \( L\times L\to\mathbb{Z}\) which is non-degenerate. We use the notation \(x\cdot y\in \mathbb{Z}\) and \(x^2= x\cdot x\) for \(x,y\in L\). 
A lattice \(L\) is called \textit{even} if \(x^2\in 2\mathbb{Z}\) for any \(x\in L\), it is called \textit{odd} otherwise.
The \textit{signature} of \(L\) is the signature of its real extension. 
The divisibility of \(x\in L\) is the positive generator of the ideal \(x\cdot L\subseteq \mathbb{Z}\). 
Let \(L^\vee=\hom(L,\mathbb{Z})\), then the finite group \(D(L)=L^\vee/L\) is called the \textit{discriminant group}. We call \textit{discriminant} the quantity \(d(L):=|D(L)|\), it coincides with absolute value of the determinant of a Gram matrix of \(L\).
The \textit{length} \(l(D(L))\) is the minimum number of generators of \(D(L)\), if \(p\) is a prime number then the \(p\)\textit{-length} \(l_p(D(L))\) is the minimum number of generators of \(D(L\otimes\mathbb{Z}_{(p)})\). 
The bilinear form of \(L\) descends to a well-defined bilinear form \(D(L)\times D(L)\to \mathbb{Q}/\mathbb{Z}\). A lattice is called \textit{unimodular} if the discriminant group is trivial. A lattice \(L\) is called \(p\)\textit{-elementary} if \(D(L)\cong (\mathbb{Z}/p\mathbb{Z})^{\oplus a}\) for some integer \(a\geq 0\), in this case one has \(l(D(L))=l_p(D(L))\). The \textit{genus} of a lattice is the data of its parity, its signature, and its discriminant form. Isometric lattices must have the same genus, but lattices with the same genus can be not isometric.

An \textit{embedding} of lattices \(L\hookrightarrow M\) is a injective linear map that preserves the quadratic forms of the lattices \(L\) and \(M\). An embedding of lattices \(L\hookrightarrow M\) is \textit{primitive} if \(M/L\) is a free abelian group and in this case we denote by \(L^{\perp}\) the \textit{orthogonal complement} of \(L\) in \(M\).
If the embedding has finite index, we say that \(M\) is an \textit{overlattice} of \(L\). 
There is a correspondence between finite index overlattices of \(L\) and isotropic subgroups of \(D(L)\), by \cite[1.4.1]{Nikulin:int.sym.bilinear.forms}. 

A primitive embedding \(L\hookrightarrow M\) is determined by a group \(H \subseteq D(M)\) called the \textit{embedding subgroup}, an isometry \(\gamma\colon H\to H'\subseteq D(L)\) called the \textit{embedding isometry}. Similarly, by \cite[Proposition 1.5.1]{Nikulin:int.sym.bilinear.forms} a primitive embedding \(L\hookrightarrow M\) with \(L^\perp =T\) can be determined by a subgroup \(K\subseteq D(L)\) called the \textit{gluing subgroup} and an isometry \(\gamma\colon K\to K'\subseteq D(T(-1))\) called the \textit{gluing isometry}. We refer to \cite[Proposition 1.5.1, Proposition 1.15.1]{Nikulin:int.sym.bilinear.forms} for more details on embedding of lattices. 

The group of isometries of a lattice \(L\) is denoted by \(O(L)\). If \(\phi\in O(L)\) is an isometry of \(L\) then we let \(L^\phi=\{x\in L\mid \phi(x)=x\}\) be the \textit{invariant lattice} and \(L_\phi=(L^\phi)^\perp\subset L\) the \textit{coinvariant lattice}. They are primitive sublattices of \(L\), and if the isometry \(\phi\) is of finite order then \(L^\phi\oplus L_\phi\hookrightarrow L\) is a finite index embedding.  

We denote by \([k]\) the rank one lattice generated by an element of square \(k\in \mathbb{Z}\). We denote by \(\bU\) the even unimodular lattice of rank two, which is of signature \((1,1)\). We also denote by \(\bA_n,\bE_n,\bD_n\) the rank \(n\) positive definite lattices associated to the Dynkin diagrams ADE. For an odd integer \(n\geq 3\), we consider the indefinite lattice 
\[\bH_n=\begin{pmatrix}
    -2 & 1\\
    1 & (n-1)/2
\end{pmatrix},\]
which is even whenever \(n\equiv 3(4)\). Finally, if \(L\) is a lattice and \(k\) an integer, we denote by \(L(k)\) the lattice whose bilinear form is obtained from the one of \(L\) multiplied by \(k\).

 \subsection{Cubic fourfolds}
Let \(X\subset \mathbb{P}^5\) be a smooth cubic fourfold. The cohomology group \(H^4(X,\mathbb{Z})\) with the natural intersection pairing is the unique unimodular odd lattice \([1]^{\oplus 21}\oplus [-1]^{\oplus 2}\) of signature \((21,2)\). We denote by \(\eta_X\in H^4(X,\mathbb{Z})\) the square of the hyperplane class, and consider the primitive cohomology group \(H^4_{prim}(X,\mathbb{Z})=\langle \eta_X\rangle^\perp\). Notice that \(\langle \eta_X\rangle=[3],\) and the lattice \(H^4_{prim}(X,\mathbb{Z})\) is even. The primitive cohomology carries a polarized Hodge structure with Hodge numbers \((0,1,20,1,0)\), and we have an isometry of lattices \[H^4_{prim}(X,\mathbb{Z})\cong \bU^{\oplus2}\oplus \bE_8^{\oplus2}\oplus \bA_2.\] The \textit{algebraic lattice} is the lattice \(A(X)=H^4(X,\mathbb{Z})\cap H^{2,2}(X)\) and the \textit{transcendental lattice} is its orthogonal complement \(T(X)=A(X)^{\perp}\subseteq H^4(X,\mathbb{Z})\). The \textit{primitive algebraic lattice} is given by \(A_{prim}(X)=A(X)\cap H^4_{prim}(X,\mathbb{Z})\subseteq H^4_{prim}(X,\mathbb{Z})\). 

\begin{remark}\label{rmk:roots}
    Note that for a smooth cubic fourfold $X$, there does not exist $v\in A_{prim}(X)$ with $v^2=2$ (a short root) or with $v^2=6$ and divisibility 3 in the lattice $H^4(X,\mathbb{Z})$ (a long root). This follows from the description of the image of the period map \cite[Theorem 1.1]{laza2010moduli}.
\end{remark}

Let $\phi\in \Aut(X),$ and consider the induced action $\phi^*\in O(H^4(X,\mathbb{Z})).$ We thus obtain a map
 \[\Aut(X)\to O(H^4(X,\mathbb{Z}))\]
which is injective (combine \cite[Prop. 2.12]{MR3673652}, \cite{Matstumura_Monsky}). Further, we have that \(\Aut(X)\cong O_{Hdg}(H^4(X,\mathbb{Z}),\eta_X)\), the group of Hodge isometries preserving the class \(\eta_X\) by \cite{Zheng}.
\begin{definition}
    An automorphism \(\phi\in\Aut(X)\) is called \textit{symplectic} if it acts trivially on \(H^{3,1}(X,\mathbb{Z})\), \textit{non-symplectic} otherwise.
\end{definition}
We denote by $\Aut_S(X)$ the subgroup of $\Aut(X)$ consisting of symplectic automorphisms.
 According to \cite[\S 4.1]{Laza.Zheng:aut} (see also \cite[§3]{Nikulin:finite.aut.groups.K3} for the similar case of $K3$ surfaces), we have \(H^4_{prim}(X,\mathbb{Z})_\phi\subseteq A_{prim}(X)\) (and hence \(T(X)\subseteq H^4_{prim}(X,\mathbb{Z})^\phi\)) if \(\phi\) is symplectic. We say that the cubic fourfold \(X\) endowed with the action of an automorphism \(\phi\in\Aut(X)\) is \textit{general} if the previous inclusion is an equality.

The group of automorphisms of a cubic fourfold is a finite group of linear automorphisms \cite{Matstumura_Monsky}. A complete classification of automorphisms of prime order is obtained in \cite{GonzalezAguilera.Liendo:aut.prime.ord.smooth.cubic.n-folds}. In the following we recall the classification of symplectic automorphisms of prime order of a cubic fourfold (see also \cite{fu2016classification}).

\begin{theorem}[see \cite{GonzalezAguilera.Liendo:aut.prime.ord.smooth.cubic.n-folds}, {\cite[Proposition 6.1]{yu2020moduli}}]\label{class_cubiche}
Let \(X = \{F=0\} \subset \mathbb{P}^5\) be a smooth cubic fourfold with a symplectic automorphism \(\phi\in\Aut(X)\) of prime order \(p\). After a linear change of coordinates that diagonalizes \(\phi\), we have \(\phi(x_0:\ldots :x_5)=(\xi^{\sigma_0} x_0:\ldots :\xi^{\sigma_5} x_5)\) and we denote by \((\sigma_0, \ldots, \sigma_5)\) such an action. If \(d\) denotes the dimension of the family \(F_p^i\) of cubic fourfolds endowed with the automorphism \(\phi_p^i\), then we have the following possibilities:
\vspace{3pt}
\begin{itemize}\small{
        \item \(\phi_2^2\): \(p=2\), \(\sigma=(0,0,0,0,1,1)\), \(d=12\),
        \[F=L_3(x_0,\dots,x_3)+x_4^2 L_1(x_0,\dots,x_3)+x_4x_5M_1(x_0,\dots,x_3)+x_5^2 N_1(x_0,\dots,x_3),\]

        \item \(\phi_3^3\): \(p=3\), \(\sigma=(0,0,0,0,1,2)\), \(d=8\),
        \[F=L_3(x_0,\dots,x_3)+x_4^3+x_5^3+x_4x_5M_1(x_0,\dots,x_3),\]

        \item \(\phi_3^4\): \(p=3\), \(\sigma=(0,0,0,1,1,1)\), \(d=2\),
        \[F=L_3(x_0,x_1,x_2)+M_3(x_3,x_4,x_5),\]

        \item \(\phi_3^6\): \(p=3\), \(\sigma=(0,0,1,1,2,2)\), \(d=8\),
        \[F=L_3(x_0,x_1)+M_3(x_2,x_3)+N_3(x_4,x_5)+\sum_{i=0,1;j=2,3;k=4,5}a_{i,j,k}x_ix_jx_k,\]

        \item \(\phi_5^1\): \(p=5\), \(\sigma=(0,0,1,2,3,4)\), \(d=4\),
        \[F=L_3(x_0,x_1)+x_2x_5L_1(x_0,x_1)+x_3x_4M_1(x_0,x_1)+x_2^2x_4+x_2x_3^2+x_3x_5^2+x_4^2x_5,\]

        \item \(\phi_7^1\): \(p=7\), \(\sigma=(1,2,3,4,5,6)\), \(d=2\)
\[F=x_0^2x_4+x_1^2x_2+x_0x_2^2+x_3^2x_5+x_3x_4^2+x_1x_5^2+ax_0x_1x_3+bx_2x_4x_5\]

\item \(\phi_{11}^1\): \(p=11\), \(\sigma=(0,1,3,4,5,9)\), \(d=0\),
        \[F=x_0^3+x_1^2x_5+x_2^2x_4+x_2x_3^2+x_1x_4^2+x_3x_5^2,\]}
    \end{itemize}
where \(L_i,M_i,N_i\) and \(O_i\) are homogeneous polynomials of degree \(i\), \(a_{i,j,k},a,b \in\mathbb{C}\).
\end{theorem}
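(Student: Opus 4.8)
The plan is to reduce the classification to a finite combinatorial problem over $\mathbb{Z}/p\mathbb{Z}$ and then perform the enumeration. First, linearise and diagonalise the automorphism. By \cite{Matstumura_Monsky} one has $\Aut(X)\subseteq\mathrm{PGL}(6)$, so $\phi$ lifts to some $\tilde\phi\in\mathrm{GL}(6)$; since $\phi$ has order $p$ in $\mathrm{PGL}(6)$ we get $\tilde\phi^{\,p}=\lambda\,\mathrm{Id}$ for a scalar $\lambda$, and replacing $\tilde\phi$ by $\mu\tilde\phi$ with $\mu^p=\lambda^{-1}$ we may assume $\tilde\phi^{\,p}=\mathrm{Id}$. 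Then $\tilde\phi$ is diagonalisable with $p$-th roots of unity as eigenvalues, so in suitable coordinates $\tilde\phi=\mathrm{diag}(\xi^{\sigma_0},\dots,\xi^{\sigma_5})$ with $\xi=e^{2\pi i/p}$ and $\sigma_i\in\mathbb{Z}/p\mathbb{Z}$ not all equal (as $\phi\neq\mathrm{id}$). Permuting coordinates and replacing $\phi$ by a power (equivalently $\xi$ by $\xi^\lambda$, $\lambda\in(\mathbb{Z}/p\mathbb{Z})^\times$) normalises $\sigma$ within its equivalence class, recovering the form of the statement.

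Second, translate invariance and the symplectic condition into congruences. The space $V=H^0(\mathbb{P}^5,\mathcal O(3))$ of cubic forms splits into $\tilde\phi$-eigenspaces $V=\bigoplus_{k\in\mathbb{Z}/p\mathbb{Z}}V_k$, where $V_k$ is spanned by the monomials $x^a$, $|a|=3$, with $\langle a,\sigma\rangle\equiv k\pmod p$. Since $X$ is smooth its equation $F$ is unique up to scalar and $\tilde\phi$-semi-invariant, so $F\in V_k$ for a single $k$. Using $\tilde\phi^*(dx_0\wedge\cdots\wedge dx_5)=\xi^{\sigma_0+\cdots+\sigma_5}\,(dx_0\wedge\cdots\wedge dx_5)$ and $\tilde\phi^*F=\xi^k F$, one sees that $\phi^*$ acts on the generator $\mathrm{Res}(\Omega_0/F)$ of $H^{3,1}(X)$ by $\xi^{\sigma_0+\cdots+\sigma_5-k}$; hence $\phi$ is symplectic if and only if $\sigma_0+\cdots+\sigma_5\equiv k\pmod p$. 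When $p\neq 3$ one may further rescale $\tilde\phi$ so that $k=0$, turning the condition into $\sum_i\sigma_i\equiv 0$; for $p=3$ one keeps $k$ general, although $k=0$ turns out to hold in every surviving case.

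Third, the core of the argument: impose smoothness and enumerate. The cubic $\{F=0\}$ with generic $F\in V_k$ is smooth if and only if a combinatorial condition on the set of monomials occurring in $V_k$ holds: for instance smoothness at each coordinate point $e_i$ forces, for every $i$, the existence of $j$ with $2\sigma_i+\sigma_j\equiv k\pmod p$ (so that $x_i^2x_j\in V_k$), and smoothness along the coordinate subspaces gives analogous requirements; equivalently, a smooth invariant cubic exists exactly when $V_k$ contains a smooth monomial arrangement (Fermat-type blocks $\sum x_i^3$, cyclic blocks $\sum x_i^2x_{i+1}$, and their combinations), which is the criterion of \cite{GonzalezAguilera.Liendo:aut.prime.ord.smooth.cubic.n-folds}. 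Solving these congruences in six variables together with $\sum_i\sigma_i\equiv k$ leaves only finitely many weight vectors $\sigma$ up to permutation and scaling, forcing $p\in\{2,3,5,7,11\}$ and producing precisely the list in the statement. For each of them one writes the generic $F\in V_k$, simplifies it with the coordinate changes commuting with $\tilde\phi$ (the block-diagonal centraliser of $\tilde\phi$ in $\mathrm{GL}(6)$) to obtain the displayed normal forms, and computes the dimension of the family as $d=\dim\mathbb P(V_k)-\dim Z_{\mathrm{PGL}(6)}(\phi)$, the generic such cubic having finite stabiliser.

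The main obstacle is the smoothness step. Base-point-freeness of $\mathbb P(V_k)$ is sufficient but not necessary for the generic member to be smooth, so one must genuinely control the singular locus of a generic invariant cubic, establishing a sharp combinatorial criterion for the existence of a smooth $\tilde\phi$-invariant cubic; and then one must rule out the a priori infinite family of candidate weight vectors, in particular showing that no prime $p\geq 13$ can occur with six variables. A secondary technical point is the moduli count: the centraliser $Z_{\mathrm{PGL}(6)}(\phi)$ can be positive-dimensional (e.g.\ $\mathrm{PGL}(3)\times\mathrm{PGL}(3)$ for $\phi_3^4$), and one must verify that a generic invariant cubic carries no automorphisms beyond $\langle\phi\rangle$ so that orbit--stabiliser yields exactly the value of $d$ recorded in the table.
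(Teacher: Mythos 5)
This theorem is not proved in the paper at all: it is imported verbatim from \cite{GonzalezAguilera.Liendo:aut.prime.ord.smooth.cubic.n-folds} and \cite[Proposition 6.1]{yu2020moduli}, so there is no internal argument to compare against. Your outline does reproduce the strategy of those references --- linearize via Matsumura--Monsky, diagonalize a lift of order $p$, decompose the cubic forms into weight spaces $V_k$, read off the symplectic condition from the Griffiths residue, and then enumerate the weight vectors compatible with smoothness. As a roadmap it is the right one.

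However, as a proof it has genuine gaps, and they are exactly the parts that constitute the content of the theorem. First, the smoothness step: you state the necessary conditions coming from the coordinate points and coordinate subspaces, but you never establish a criterion that is also sufficient for the \emph{generic} member of $\mathbb{P}(V_k)$ to be smooth, nor do you carry out the resulting finite check; you explicitly defer this as ``the main obstacle.'' Second, the bound $p\le 11$ and the completeness of the list of weight vectors $\sigma$ are asserted (``solving these congruences \dots leaves only finitely many weight vectors'') rather than proved; ruling out $p\ge 13$ requires an actual argument (e.g.\ counting the monomials forced by smoothness against the $\binom{8}{3}$ available ones). Third, the dimension count $d=\dim\mathbb{P}(V_k)-\dim Z_{\mathrm{PGL}(6)}(\phi)$ needs the verification, case by case, that the generic invariant cubic has finite stabilizer; you name this but do not do it. Finally, a technical slip: $H^{3,1}(X)$ of a cubic fourfold is generated by $\mathrm{Res}(\Omega/F^{2})$, not $\mathrm{Res}(\Omega/F)$, so the character by which $\phi$ acts is $\xi^{\sigma_0+\cdots+\sigma_5-2k}$ and the symplectic condition is $\sum_i\sigma_i\equiv 2k\pmod p$. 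This is harmless once you normalize $k=0$ (possible for $p\neq 3$), but for $p=3$, where $k$ cannot in general be normalized away by rescaling the lift, using $\sum_i\sigma_i\equiv k$ instead of $\equiv 2k\equiv -k$ could in principle misclassify semi-invariant cases, so the $p=3$ analysis needs to be redone with the correct congruence.
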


For a smooth cubic fourfold \(X\subset \mathbb{P}^5\), the Fano variety of lines $F(X)$ is an irreducible holomorphic symplectic fourfold, which is of \(K3^{[2]}\)-type \cite{Beauville.Donagi:variete.droites.hypersurf.cub.dim.4}.
The second cohomology $H^2(F(X),\mathbb{Z})$ is equipped with the Beauville-Bogomolov-Fujiki form \(q\), which is an integral bilinear quadratic form.  
 We let $H^2_{prim}(F(X),\mathbb{Z}):=g^\perp\subset H^2(F(X),\mathbb{Z})$, where $g$ is the Pl\"ucker polarisation, satisfying $q(g)=-6$.
The Abel-Jacobi map \(\alpha\colon H^4(X,\mathbb{Z})\rightarrow H^2(F(X),\mathbb{Z})\) restricts to an isomorphism of Hodge structures \[H^4_{prim}(X,\mathbb{Z})\rightarrow H^2_{prim}(F(X),\mathbb{Z})\] satisfying $q(\alpha(x), \alpha(y))=-x\cdot y$ for $x,y\in H^4_{prim}(X,\mathbb{Z}).$ 

\begin{remark}
    Note that an automorphism $f$ of $X$ induces an automorphism of $F(X),$ which is symplectic if and only if $f\in \mathrm{Aut}_S(X)$, by the discussion above.
\end{remark}

\section{The algebraic lattice}\label{algebraic_lattice_section}

In this section we compute the algebraic lattice \(A(X)\) and the transcendental lattice \(T(X)\) for a cubic fourfold \(X\) which is general in a family \(F_p^i\) as in \autoref{class_cubiche}. The primitive algebraic lattice \(A_{prim}(X)\) has been classified in \cite{Laza.Zheng:aut}, we need to understand the relation of \(A(X)\) with the sublattice \(A_{prim}(X)\). 

Denote by \(\mathcal{C}_d\) the irreducible \textit{Hassett divisor} of cubic fourfolds \(X\) admitting a primitive rank \(2\) sublattice \(\eta_X\in K\subseteq A(X)\) of discriminant \(d\in\mathbb{Z}_{\geq 0}\) and recall that \(\mathcal{C}_d\) is non-empty if and only if \(d\equiv 0 (6)\) or \(d\equiv 2 (6)\), see \cite{hassett2000special}.

\begin{lemma}\label{lem:Z3_discrim}
    Let \(X\) be a cubic fourfold, and let \(\Gamma\subseteq H_{prim}^4(X,\mathbb{Z}) \) be a primitive sublattice. Then, there exists a proper overlattice \(\langle \eta_X\rangle\oplus \Gamma\subset \widetilde{\Gamma} \) obtained by gluing the discriminant form of \(\langle\eta_X\rangle\) if and only if \(D(\Gamma)\) admits a subgroup isomorphic to \(\mathbb{Z}/3\mathbb{Z}\) with discriminant form \(\frac{-1}{3}\). In this case, the overlattice \(\Gamma\oplus \langle \eta_X\rangle\subset \widetilde{\Gamma}\) has index \(3\) and \(d(\Gamma)=3 \cdot d(\widetilde{\Gamma})\).
\end{lemma}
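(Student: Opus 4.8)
The plan is to reduce everything to Nikulin's correspondence between finite-index overlattices of \(\langle\eta_X\rangle\oplus\Gamma\) and isotropic subgroups of its discriminant group, the only input specific to cubic fourfolds being that \(\langle\eta_X\rangle\cong[3]\). Set \(L=\langle\eta_X\rangle\oplus\Gamma\), so that \(D(L)=D(\langle\eta_X\rangle)\oplus D(\Gamma)\) with \(D(\langle\eta_X\rangle)=\mathbb Z/3\mathbb Z\) generated by \(u=\tfrac13\eta_X\) of square \(\tfrac13\). Since \(\langle\eta_X\rangle=[3]\) is odd, so is any overlattice of \(L\), and the relevant notion is \(b\)-isotropy for the bilinear discriminant form; by \cite[1.4.1]{Nikulin:int.sym.bilinear.forms} proper overlattices \(L\subset\widetilde\Gamma\) are in bijection with nonzero \(b\)-isotropic subgroups \(H\subseteq D(L)\), with \([\widetilde\Gamma:L]=|H|\) and \(d(\widetilde\Gamma)=d(L)/|H|^2\).

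The key observation, which rigidifies the situation, is that \(b(u,u)=\tfrac13\neq 0\) in \(\mathbb Q/\mathbb Z\), so the unique nontrivial subgroup of \(D(\langle\eta_X\rangle)\) is not isotropic; hence every \(b\)-isotropic \(H\) satisfies \(H\cap D(\langle\eta_X\rangle)=0\), so the first projection \(p_1\colon H\to D(\langle\eta_X\rangle)\) is injective. Consequently an overlattice ``obtained by gluing the discriminant form of \(\langle\eta_X\rangle\)'' — that is, one for which \(p_1(H)=D(\langle\eta_X\rangle)\) — has \(|H|=3\) with \(p_1|_H\) an isomorphism and \(H\cap D(\Gamma)=\ker(p_1|_H)=0\). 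This already forces \([\widetilde\Gamma:L]=3\) and, via \(d(\widetilde\Gamma)=d(L)/9=3\,d(\Gamma)/9\), the relation \(d(\Gamma)=3\,d(\widetilde\Gamma)\); moreover the vanishings \(H\cap D(\langle\eta_X\rangle)=H\cap D(\Gamma)=0\) give formally that \(\langle\eta_X\rangle\) and \(\Gamma\) remain primitive in \(\widetilde\Gamma\) with \(\Gamma=\eta_X^{\perp_{\widetilde\Gamma}}\).

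It remains to identify exactly when such an \(H\) exists. Since \(p_1|_H\) is an isomorphism, \(H\) is the graph \(\{(ku,kv)\colon k\in\mathbb Z/3\mathbb Z\}\) of a homomorphism sending \(u\) to some \(v\in D(\Gamma)\), and \(b\)-isotropy of \(H\) amounts to the single equation \(\tfrac13+b_\Gamma(v,v)\equiv 0\pmod{\mathbb Z}\). Thus \(\widetilde\Gamma\) as in the statement exists if and only if there is \(v\in D(\Gamma)\) with \(b_\Gamma(v,v)\equiv-\tfrac13\pmod{\mathbb Z}\); any such \(v\) has order exactly \(3\) (its square is nonzero) and generates a copy of \(\mathbb Z/3\mathbb Z\) carrying the discriminant form \(\tfrac{-1}{3}\) — a well-defined isometry class, since the other generator \(2v\) also has square \(4\cdot(-\tfrac13)\equiv-\tfrac13\pmod{\mathbb Z}\). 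For the converse one takes \(H=\langle(u,v)\rangle\), applies the correspondence, and reads off the index and the discriminant relation from the previous paragraph.

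I do not anticipate a genuine difficulty here: the whole argument is driven by the anisotropy of \(u\) on \(D(\langle\eta_X\rangle)\), which forces \(p_1|_H\) to be injective and thereby pins the index to \(3\). The only points needing care are bookkeeping ones — working with the bilinear rather than the quadratic discriminant form because \(\langle\eta_X\rangle\) is odd (so that the overlattice is automatically integral and the correspondence is the correct one), the independence of the induced form on \(\langle v\rangle\) from the choice of generator, and the formal check that \(\langle\eta_X\rangle\) and \(\Gamma\) stay primitive in \(\widetilde\Gamma\) with the expected orthogonal complements, all immediate from \(H\cap D(\langle\eta_X\rangle)=H\cap D(\Gamma)=0\).
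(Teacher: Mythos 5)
Your proof is correct and follows essentially the same route as the paper: both arguments come down to identifying the overlattice with a gluing vector $\tfrac{\eta_X+\omega}{3}$ and translating its integrality into the existence of an order-three element of $D(\Gamma)$ of square $-\tfrac13 \bmod \mathbb{Z}$, with the discriminant relation falling out of the index-$3$ computation. The only difference is presentational — you phrase it via Nikulin's isotropic-subgroup correspondence and the anisotropy of $\tfrac13\eta_X$ in $D(\langle\eta_X\rangle)$, whereas the paper writes the gluing vector $v=\tfrac{k\eta_X+\omega}{r}$ explicitly and deduces $r=3$, $k\not\equiv 0\ (3)$ by hand.
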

\begin{proof}
    Assume there is such an overlattice \(\Gamma\oplus \langle \eta_X\rangle \subset \widetilde{\Gamma}\). 
    Then \(\widetilde{\Gamma}\) is generated by \(\eta_X\), the generators of \(\Gamma\), and a vector \[v=\frac{k \eta_X+\omega}{r}\] with \(\omega\in \Gamma\), \(r,k \in \mathbb{N}\) such that \(r\not=0,1\). 
    In this case, \(r\) is the index of \(\Gamma\oplus \langle \eta_X\rangle\subset \widetilde{\Gamma}\). 
    Note that \(v\cdot\eta_X=\frac{3k}{r}\) is an integer and, by fact that \(\Gamma\) is primitive in \(\widetilde{\Gamma}\), we see that \(r\) cannot divide \(k\). 
    In other words, \(\frac{k\eta_X}{r}\) must be a non-trivial element of \(D(\Gamma\oplus \langle\eta_X\rangle)\), hence \(k\not\equiv 0 (3)\) and \(r=3\). 
    As a consequence, we can choose \(v\) of the form \(v=\frac{\eta_X+\omega}{3}\) (substituting \(v\) with \(-2v+\eta_X\)
 when \(k\equiv 2 (3)\)) and, since for any \(x\in \Gamma\) we have \(v\cdot x=\frac{\omega\cdot x}{3}\in \mathbb{Z}\), we deduce that \[\gamma:= \frac{\omega}{3}\in D(\Gamma)\] is well defined of order \(3\). We compute \(v^2=\frac{3+\omega^2}{9}\in \mathbb{Z}\), which implies that \[\gamma^{2}=\frac{\omega^2}{9}=\frac{-1}{3} \mod \mathbb{Z}\] since \(\omega^2\equiv -3  (9)\). From the fact that taking a finite index \(r\) overlattice the discriminant is divided by \(r^2\), we have \[\frac{ d(\widetilde{\Gamma})}{9}=\frac{d(\langle\eta_X\rangle\oplus \Gamma)}{9}=\frac{3 d(\Gamma)}{9}\] from which we deduce that \(d(\Gamma)=3 d(\widetilde{\Gamma})\).

 Conversely, following the same proof, the existence of such an element \(\gamma\in D(\Gamma)\) gives a vector \(v=\frac{\eta_X}{3}+\gamma\) which determines the overlattice \(\widetilde{\Gamma}\) with the required properties.
 \end{proof}

\begin{remark}
    Note that the overlattice \(\widetilde{\Gamma}\) as in the previous lemma may or may not be contained in \(H^4(X,\mathbb{Z})\). This situation will be clarified by analysing the cases \(\phi_3^3\) and \(\phi_3^6\), respectively.
\end{remark}

\begin{lemma}\label{lem:d_cong_2_mod_6}
    Let \(X\) be a cubic fourfold, then \(A(X)\) is a proper overlattice of \(\langle \eta_X\rangle\oplus A_{prim}(X)\) if and only if \(X\in\mathcal{C}_d\) for some \(d\equiv 2 (6)\). In this case, the embedding of lattices \(\langle \eta_X\rangle\oplus A_{prim}(X) \subset A(X)\) has index \(3\) and \(3\cdot  d(A(X))= d(A_{prim}(X))\).
\end{lemma}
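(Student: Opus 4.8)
The plan is to combine \autoref{lem:Z3_discrim}, applied with $\Gamma=A_{prim}(X)$, with an explicit description of the glue vectors, and then to match the resulting discriminant-form condition with the defining condition of the Hassett divisors $\mathcal{C}_d$ for $d\equiv 2\,(6)$.

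First I would set up the elementary framework. Since $\eta_X\in A(X)$ and $\eta_X^2=3\neq 0$, the sublattice $\langle\eta_X\rangle\oplus A_{prim}(X)$, where $A_{prim}(X)=A(X)\cap\langle\eta_X\rangle^\perp$ is the orthogonal complement of $\eta_X$ in $A(X)$, has finite index in $A(X)$; and both $\langle\eta_X\rangle$ and $A_{prim}(X)$ are primitive in $H^4(X,\mathbb{Z})$ (the first because $\eta_X$ is part of a basis, the second because $A(X)=H^4(X,\mathbb{Z})\cap H^{2,2}(X)$ is primitive and $A_{prim}(X)=A(X)\cap H^4_{prim}(X,\mathbb{Z})$). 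Then I would observe that a proper overlattice $M$ with $\langle\eta_X\rangle\oplus A_{prim}(X)\subseteq M\subseteq H^4(X,\mathbb{Z})$ corresponds to a nonzero isotropic subgroup $H\subseteq D(\langle\eta_X\rangle)\oplus D(A_{prim}(X))\cong\mathbb{Z}/3\mathbb{Z}\oplus D(A_{prim}(X))$, and primitivity of $\eta_X$ and of $A_{prim}(X)$ in $M$ forces $H$ to meet each summand trivially; hence $H$ is the graph of an injection $\mathbb{Z}/3\mathbb{Z}\hookrightarrow D(A_{prim}(X))$, the index of $M$ is $3$, and isotropy of $H$ says exactly that the image carries the discriminant form $\tfrac{-1}{3}$. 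This is precisely the hypothesis of \autoref{lem:Z3_discrim}; it also gives the ``in this case'' statement, since then $d(A(X))=d(\langle\eta_X\rangle\oplus A_{prim}(X))/3^2=3\,d(A_{prim}(X))/9$, so $3\,d(A(X))=d(A_{prim}(X))$.

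It remains to match ``$D(A_{prim}(X))$ contains a copy of $\mathbb{Z}/3\mathbb{Z}$ with form $\tfrac{-1}{3}$ that is realized inside $A(X)$'' with ``$X\in\mathcal{C}_d$ for some $d\equiv 2\,(6)$'', and here I would produce glue vectors explicitly. For the forward direction, the index-$3$ description from the previous paragraph together with a rescaling of the glue vector yields a primitive $\omega\in A_{prim}(X)$ with $v:=\tfrac{\eta_X+\omega}{3}\in A(X)$; by \autoref{lem:Z3_discrim} one has $\omega^2\equiv 6\,(9)$, and since $H^4_{prim}(X,\mathbb{Z})$ is even, $\omega^2$ is even, hence $\omega^2\equiv 6\,(18)$. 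Taking the saturation $\widetilde{K}$ of $\langle\eta_X,\omega\rangle$ in $A(X)$, one checks (using that $\eta_X,\omega$ are primitive and that $v$ already forces index $3$ over $\langle\eta_X\rangle\oplus\mathbb{Z}\omega$) that $[\widetilde{K}:\langle\eta_X\rangle\oplus\mathbb{Z}\omega]=3$, so $d(\widetilde{K})=3\omega^2/9=\omega^2/3\equiv 2\,(6)$; this is positive since $H^{2,2}(X)$ carries a positive definite form, and $\widetilde{K}$ is a primitive rank-two sublattice containing $\eta_X$, so $X\in\mathcal{C}_{d(\widetilde{K})}$. Conversely, if $X\in\mathcal{C}_d$ with $d\equiv 2\,(6)$, pick a primitive rank-two $\eta_X\in K\subseteq A(X)$ with $d(K)=d$ and write $K\cap\langle\eta_X\rangle^\perp=\mathbb{Z}w$ with $w\in A_{prim}(X)$ primitive in $H^4(X,\mathbb{Z})$. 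Comparing $d(\langle\eta_X\rangle\oplus\mathbb{Z}w)=3w^2$ with $d(K)=d$, using $3\nmid d$ to force $3\mid[K:\langle\eta_X\rangle\oplus\mathbb{Z}w]$ and primitivity of $\eta_X,w$ to bound this index by $3$, one gets index exactly $3$ and (after possibly replacing $w$ by $-w$) $v:=\tfrac{\eta_X+w}{3}\in K\subseteq A(X)$; since $v\cdot\eta_X=1$ we have $v\notin\langle\eta_X\rangle\oplus A_{prim}(X)$, so $A(X)$ is a proper overlattice.

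The main obstacle is not conceptual but a matter of keeping the abstract lattice theory and the concrete geometric lattice aligned: \autoref{lem:Z3_discrim} only produces an \emph{abstract} glue overlattice, so one must argue at each step that the relevant glue vector really lies in $A(X)$. This is where the identity $A(X)=H^4(X,\mathbb{Z})\cap H^{2,2}(X)$ is essential, since $\tfrac{\eta_X+\omega}{3}$ automatically lies in $H^{2,2}(X)$ once $\eta_X,\omega$ do, reducing membership in $A(X)$ to an integrality statement inside $H^4(X,\mathbb{Z})$; combined with the parity and mod-$9$ bookkeeping, this is what forces the discriminant of the resulting rank-two lattice to be exactly $\equiv 2\,(6)$ rather than, for instance, $\equiv 4\,(6)$.
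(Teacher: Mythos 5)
Your proof is correct, and it follows the same overall strategy as the paper: everything is reduced to \autoref{lem:Z3_discrim} and to the glue vector \(v=\tfrac{\eta_X+\omega}{3}\) with \(\omega^2\equiv-3\ (9)\), even, so that the rank-two saturation of \(\langle\eta_X,\omega\rangle\) has discriminant \(\omega^2/3\equiv 2\ (6)\). The execution differs in two places, both to your advantage. First, you prove the direction ``\(X\in\mathcal{C}_d\), \(d\equiv2\ (6)\) \(\Rightarrow\) proper overlattice'' directly, by extracting the glue vector from a discriminant-\(d\) sublattice \(K\) (index forced to be \(3\) since \(3\nmid d\) and the index divides \(\eta_X^2=3\)), whereas the paper argues by contrapositive, showing that if \(A(X)=\langle\eta_X\rangle\oplus A_{prim}(X)\) then every rank-two primitive \(K\ni\eta_X\) has \(d\equiv0\ (6)\). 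Second, in the forward direction you normalize the glue vector so that its \(A_{prim}\)-component \(\omega\) is primitive and read off \(d=\omega^2/3\) at once; the paper instead writes \(\omega=sH\) with \(H\) primitive and runs a case analysis on \(H^2/2\) modulo \(9\) and \(36\), which your normalization renders unnecessary. The only step you should expand is the ``rescaling'': if the glue vector is \(\tfrac{\eta_X+sH}{3}\) with \(H\) primitive and \(3\nmid s\), multiplying it by \(s^{-1}\bmod 3\) and subtracting suitable integer multiples of \(\eta_X\) and \(H\) gives \(\tfrac{\eta_X\pm H}{3}\in A(X)\), and \(s^2\equiv1\ (3)\) forces \(H^2\equiv\omega^2\equiv-3\ (9)\) as well; with that line added the argument is complete. (As in the paper, the degenerate value \(d=2\) is implicitly excluded by \autoref{rmk:roots}, since \(\tfrac{\eta_X+\omega}{3}\in H^4(X,\mathbb{Z})\) with \(\omega^2=6\) would make \(\omega\) a long root.)
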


\begin{proof}
Clearly we assume \(\rank A(X)\geq 2\), since otherwise \(A(X)=\langle\eta_X\rangle\) and there is nothing to prove.
Let \(\Gamma=A_{prim}(X)\) then, by \autoref{lem:Z3_discrim} and its proof, there exists a proper overlattice \(\widetilde{\Gamma}\) of \(\langle \eta_X\rangle\oplus A_{prim}(X)\) obtained by gluing the discriminant group of \(\langle\eta_X\rangle\) if and only if there exists an element \(\omega \in A_{prim}(X)\) such that \(\omega^2\equiv -3 (9)\). In this case, the overlattice \(\widetilde{\Gamma}\) coincides with \(A(X)\) if and only if it is contained in \(H^4(X,\mathbb{Z})\).

Suppose first that \(A(X)=\langle \eta_X\rangle\oplus A_{prim}(X)\) and let \(\eta_X \in K\subseteq A(X) \) be a primitive sublattice of rank \(2\). Then \(K=\langle\eta_X,a \eta_X+v\rangle\) for \(0\not=v\in A_{prim}(X)\) and \(a\in\mathbb{Z}\). We can suppose \(K=\langle\eta_X,v\rangle\) after applying a linear transformation. The discriminant  of \(K\) is given by \(d=3k\) for \(k\in 2\mathbb{Z}\), and hence \(d\equiv 0 (6)\). 
This proves that if \(X\in \mathcal{C}_d\) for \(d\equiv 2 (6)\), then \(A(X)\) is a proper overlattice of \(\langle\eta_X\rangle\oplus A_{prim}(X)\). Thus we now assume that \(A(X)\) is a proper overlattice of \(\langle\eta_X\rangle\oplus A_{prim}(X)\), and we willl to prove that \(X\in \mathcal{C}_d\) for some \(d\equiv 2(6)\).

By the previous discussion the finite index embedding \(\langle\eta_X\rangle\oplus A_{prim}(X)\subset A(X)\) must be of index \(3\) and \(3\cdot  d(A(X))= d(A_{prim}(X))\). Let \( \omega\in A_{prim}(X)\) be the element determining the finite index embedding as in the proof of \autoref{lem:Z3_discrim}, then write \(\omega= s H\) for a primitive vector \(H\in A_{prim}(X)\) and \(s\in \mathbb{Z}\). We have that \(H^2=2a\) for \(a\in \mathbb{Z}\) since \(A_{prim}(X)\) is an even lattice. 

On the one hand, we have that \(\omega^2=s^2 2a\), and on the other hand we know that \(\omega^2\equiv -3(9)\) so that \(\omega^2=-3+9(2h+1)=6+18h\) for \(h\in \mathbb{Z}\). The squares modulo \(9\) are \(0,1,4,7\), and hence \(s^2 2a=6+18h\) if and only if \(a\equiv 3 (9)\) or \( a\equiv 12(36)\). In these cases, we see that there exists an overlattice \(K\subseteq A(X)\) of \(\langle\eta_X,H\rangle\) whose discriminant is \[d=\frac{d(\langle H\rangle)}{3}=\frac{2a}{3},\] by applying again \autoref{lem:Z3_discrim} for \(\Gamma=\langle H\rangle\) together with the fact that in this case \(\widetilde{\Gamma}\subseteq H^4(X,\mathbb{Z})\). Now, from \(a=3+9k\), \(k\in \mathbb{Z}\) we get \[d=\frac{2(3+9k)}{3}=2+6k,\] and from \(a=12+36k\), \(k\in \mathbb{Z}\) we get \[d=\frac{2(12+36k)}{3}=2+6(4k+1),\] so that in any case \(d\equiv 2(6)\).

\end{proof}

As a consequence, we see that when $X$ admits an automorphism of prime order different from three, the lattice \(A_{prim}(X)\) determines the lattice \(A(X)\).

\begin{proposition}\label{p_not_3}
    Let \(X\) be a cubic fourfold with a symplectic automorphism \(\phi_p^i\) of prime order \(p\not=3\), which is general in the family \(F_p^i\). Then, we have \(A(X)=\langle \eta_X\rangle\oplus A_{prim}(X)\) and \(X\not\in\bigcup_{d\equiv 2(6)}\mathcal{C}_d\).
\end{proposition}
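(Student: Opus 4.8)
The plan is to prove the first assertion, $A(X)=\langle\eta_X\rangle\oplus A_{prim}(X)$, and then to obtain $X\notin\bigcup_{d\equiv 2(6)}\mathcal{C}_d$ at once from \autoref{lem:d_cong_2_mod_6}. Write $M:=H^4_{prim}(X,\mathbb{Z})$; since $X$ is general in $F_p^i$ we have $A_{prim}(X)=M_\phi$ (the coinvariant lattice of $\phi$ acting on $M$) and, by definition, $A_{prim}(X)=A(X)\cap M$. Because $d(\langle\eta_X\rangle\oplus M)=3\cdot d(M)=9$ while $H^4(X,\mathbb{Z})$ is unimodular, the sublattice $\langle\eta_X\rangle\oplus M\subset H^4(X,\mathbb{Z})$ has index $3$; I would fix a generator $g$ of the quotient and write $g=\tfrac{1}{3}(\eta_X+m_0)$ with $m_0\in M$, observing that $m_0\notin 3M$ (otherwise $\tfrac{1}{3}\eta_X=g-\tfrac{1}{3}m_0\in H^4(X,\mathbb{Z})$, contradicting that $\eta_X$ is primitive), so that the class $\overline{m}_0\in M/3M$ is nonzero.

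Next I would use the hypothesis $p\neq 3$ twice. First, the index $[M:M^\phi\oplus M_\phi]$ is a power of $p$ (as usual, $\Phi_p(X)-p$ is divisible by $X-1$, which gives $pM\subseteq M^\phi\oplus M_\phi$, where $\Phi_p$ denotes the $p$-th cyclotomic polynomial), hence coprime to $3$; therefore $M\otimes\mathbb{F}_3=(M^\phi\otimes\mathbb{F}_3)\oplus(M_\phi\otimes\mathbb{F}_3)$ as $\phi$-modules, and since $\Phi_p(\phi)$ vanishes on $M_\phi$ while $\Phi_p$ is coprime to $X-1$ modulo $3$, the second summand has no nonzero $\phi$-fixed vector; thus $(M\otimes\mathbb{F}_3)^\phi=M^\phi\otimes\mathbb{F}_3$ and $M_\phi\otimes\mathbb{F}_3$ is precisely the image of $M_\phi$ in $M\otimes\mathbb{F}_3$. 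Second, $\phi$ acts trivially on $D(M)\cong\mathbb{Z}/3\mathbb{Z}$: since $H^4(X,\mathbb{Z})$ is a unimodular overlattice of $\langle\eta_X\rangle\oplus M$ and the groups $D(\langle\eta_X\rangle)$, $D(M)$ are cyclic of order $3$ with no nonzero isotropic elements, the gluing subgroup is the graph of an isomorphism $D(\langle\eta_X\rangle)\to D(M)$; as $\phi$ fixes $\eta_X$ it acts as the identity on $D(\langle\eta_X\rangle)$, so compatibility with this graph forces the action on $D(M)$ to be trivial too. Combining the two facts, $\phi(m_0)\equiv m_0\pmod{3M}$, so $\overline{m}_0$ is a nonzero element of $(M\otimes\mathbb{F}_3)^\phi=M^\phi\otimes\mathbb{F}_3$; in particular $\overline{m}_0\notin M_\phi\otimes\mathbb{F}_3$.

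It remains to see that this rules out a proper inclusion. One always has $\langle\eta_X\rangle\oplus M_\phi=A(X)\cap(\langle\eta_X\rangle\oplus M)$, so $A(X)=\langle\eta_X\rangle\oplus A_{prim}(X)$ is equivalent to $A(X)\subseteq\langle\eta_X\rangle\oplus M$. If the latter failed, then (since $H^4(X,\mathbb{Z})/(\langle\eta_X\rangle\oplus M)$ has prime order) some class of $A(X)$ would be congruent to $g$ modulo $\langle\eta_X\rangle\oplus M$; that is, $\tfrac{1}{3}\big((1+3a)\eta_X+m_0+3m_1\big)\in A(X)=H^4(X,\mathbb{Z})\cap H^{2,2}$ for suitable $a\in\mathbb{Z}$ and $m_1\in M$. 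Since $\eta_X\in H^{2,2}$, this would force $m_0+3m_1\in M\cap H^{2,2}=A_{prim}(X)=M_\phi$, i.e. $\overline{m}_0\in M_\phi\otimes\mathbb{F}_3$, contradicting the previous paragraph. Hence $A(X)=\langle\eta_X\rangle\oplus A_{prim}(X)$, and \autoref{lem:d_cong_2_mod_6} yields $X\notin\bigcup_{d\equiv 2(6)}\mathcal{C}_d$.

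I expect the only genuinely delicate point to be the triviality of the action of $\phi$ on $D(M)\cong\mathbb{Z}/3\mathbb{Z}$, together with keeping track of the identifications $A_{prim}(X)=A(X)\cap M$, $\langle\eta_X\rangle\oplus M_\phi=A(X)\cap(\langle\eta_X\rangle\oplus M)$ and of the glue vector $g$; everything else is formal once $p\neq 3$ makes $M\otimes\mathbb{F}_3$ split $\phi$-equivariantly. For $p=3$ this splitting breaks down, which is exactly why the families $\phi_3^3$ and $\phi_3^6$ have to be treated separately.
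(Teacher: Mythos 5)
Your proof is correct, and it reaches the conclusion by a genuinely different (more self-contained) route than the paper. The paper's proof is shorter but leans on two external inputs: it cites \cite[Lemma 1.8]{Mongardi.Tari.Wandel:kummer.fourfolds} for the fact that the coinvariant lattice $A_{prim}(X)=H^4(X,\mathbb{Z})_{\phi}$ of a prime-order isometry is $p$-elementary, and then applies \autoref{lem:Z3_discrim} to conclude that no index-$3$ overlattice of $\langle\eta_X\rangle\oplus A_{prim}(X)$ glued along $D(\langle\eta_X\rangle)$ can exist when $p\neq 3$, since $D(A_{prim}(X))$ has no $3$-torsion. You instead work directly with the glue vector $g=\tfrac13(\eta_X+m_0)$ of the unimodular overlattice $H^4(X,\mathbb{Z})\supset\langle\eta_X\rangle\oplus H^4_{prim}(X,\mathbb{Z})$, prove by hand (via $\Phi_p(\phi)$ and $pM\subseteq M^\phi\oplus M_\phi$) that $M\otimes\mathbb{F}_3$ splits $\phi$-equivariantly with fixed part $M^\phi\otimes\mathbb{F}_3$, and show that $\overline{m}_0$ is a nonzero invariant class mod $3$, hence cannot lie in the image of $M_\phi=A_{prim}(X)$ — which is exactly what a proper overlattice $A(X)\supsetneq\langle\eta_X\rangle\oplus A_{prim}(X)$ would force. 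The equivalence ``$A(X)$ is the direct sum iff $A(X)\subseteq\langle\eta_X\rangle\oplus M$'' and the triviality of the $\phi$-action on $D(M)$ via the graph structure of the glue are both correctly justified. What your approach buys is independence from the $p$-elementarity citation (you essentially reprove the relevant piece of it mod $3$); what the paper's buys is brevity and reuse of \autoref{lem:Z3_discrim}, which it needs anyway for the $p=3$ cases. Both proofs conclude $X\notin\bigcup_{d\equiv 2(6)}\mathcal{C}_d$ identically from \autoref{lem:d_cong_2_mod_6}.
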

\begin{proof}
    We know that if \(X\) is general and \(\phi_p^i\in\Aut(X)\) is symplectic then \(A_{prim}(X)=H^4(X,\mathbb{Z})_{\phi_p^i}\). Moreover, since \(\phi_p^i\) has prime order \(p\) then \(A_{prim}(X)\) is \(p\)-elementary by \cite[Lemma 1.8]{Mongardi.Tari.Wandel:kummer.fourfolds}. We observe that either \(A(X)=\langle\eta_X\rangle\oplus A_{prim}(X)\) or \(A(X)\) is a proper overlattice of \(\langle\eta_X\rangle\oplus A_{prim}(X)\) obtained by gluing the iscriminant form of \(\langle\eta_X\rangle\). As a consequence, if \(p\not = 3\) then \(D(A_{prim}(X))\) has no subgroup isomorphic to \(\mathbb{Z}/3\mathbb{Z}\) so that by \autoref{lem:Z3_discrim} we have \(A(X)=\langle \eta_X\rangle\oplus A_{prim}(X)\) and by \autoref{lem:d_cong_2_mod_6} it follows that \(X\not\in\bigcup_{d\equiv 2(6)}\mathcal{C}_d\).
\end{proof}

\begin{theorem}\label{algebraic_lattices}
    Let \(X\) be a cubic fourfold with a symplectic automorphism \(\phi_p^i\) of prime order \(p\), which is general in the family \(F_p^i\). Then the lattice \(A(X)\) has Gram matrix as in \autoref{appendix_gram_matrices} and the lattice \(T(X)\) is displayed in \autoref{tab:order_p_sympl_cubic}. 
\end{theorem}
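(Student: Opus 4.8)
The plan is to treat each family $F_p^i$ from \autoref{class_cubiche} separately, using the classification of the primitive algebraic lattice $A_{prim}(X) = H^4_{prim}(X,\mathbb{Z})_{\phi_p^i}$ already available from \cite{Laza.Zheng:aut}, together with the two structural results \autoref{lem:Z3_discrim} and \autoref{lem:d_cong_2_mod_6} that govern how $A(X)$ sits over $\langle \eta_X\rangle \oplus A_{prim}(X)$. First I would dispose of all the cases with $p \neq 3$: by \autoref{p_not_3} we have $A(X) = \langle \eta_X\rangle \oplus A_{prim}(X) = [3] \oplus A_{prim}(X)$ on the nose, so the Gram matrices in \autoref{appendix_gram_matrices} and the transcendental lattices in \autoref{tab:order_p_sympl_cubic} are obtained directly from the known coinvariant lattices. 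For $T(X)$ one uses that (for $X$ general) $T(X) = H^4_{prim}(X,\mathbb{Z})^{\phi}$, i.e. the invariant lattice, which is the orthogonal complement of $A_{prim}(X)$ inside $\bU^{\oplus 2}\oplus \bE_8^{\oplus 2}\oplus \bA_2$; this complement is computed via Nikulin's theory \cite{Nikulin:int.sym.bilinear.forms} — since $A_{prim}(X)$ is $p$-elementary, the discriminant form of $T(X)$ is essentially forced, and uniqueness in the genus (when it holds) pins down $T(X)$ exactly.

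The substantive cases are $p = 3$, namely $\phi_3^3$, $\phi_3^4$, and $\phi_3^6$. Here $D(A_{prim}(X))$ can contain a $\mathbb{Z}/3\mathbb{Z}$ summand with discriminant form $\tfrac{-1}{3}$, so by \autoref{lem:Z3_discrim} there is a candidate index-$3$ overlattice $\widetilde{\Gamma} \supset \langle \eta_X\rangle \oplus A_{prim}(X)$; whether $A(X) = \widetilde{\Gamma}$ or $A(X) = \langle \eta_X\rangle \oplus A_{prim}(X)$ depends on whether $\widetilde{\Gamma}$ embeds into $H^4(X,\mathbb{Z})$. The criterion, extracted from the proof of \autoref{lem:d_cong_2_mod_6}, is arithmetic: $\widetilde{\Gamma} \subset H^4(X,\mathbb{Z})$ iff there is $\omega \in A_{prim}(X)$ with $\omega^2 \equiv -3 \pmod 9$, equivalently iff $X \in \mathcal{C}_d$ for some $d \equiv 2 \pmod 6$. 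So for each of the three families I would examine the explicit coinvariant lattice $A_{prim}(X)$ and check whether it represents such an $\omega$. I expect $\phi_3^3$ and $\phi_3^6$ to behave differently (the remark after \autoref{lem:Z3_discrim} signals exactly this), so the work is: (i) write down $A_{prim}(X)$ in each case, (ii) decide the mod-$9$ representability question, (iii) if the overlattice is realized, compute its Gram matrix (choosing a convenient basis including $\eta_X$ and the glue vector $v = \tfrac{\eta_X + \omega}{3}$) and its discriminant, using $3\, d(A(X)) = d(A_{prim}(X))$, and (iv) recompute $T(X)$ accordingly — note that passing to the overlattice on the algebraic side changes the discriminant form of $T(X)$, since $T(X) = A(X)^\perp$ in the unimodular lattice $H^4(X,\mathbb{Z})$, so $d(T(X)) = d(A(X))$ and the glue must be tracked.

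The main obstacle is the realizability step (ii)–(iii) in the $p=3$ cases: deciding exactly when the abstract overlattice $\widetilde{\Gamma}$ of \autoref{lem:Z3_discrim} is actually the algebraic lattice of the cubic — this requires genuinely using the geometry (or the image of the period map, \autoref{rmk:roots}, to rule out that $A(X)$ contains forbidden short or long roots, which would otherwise be permitted lattice-theoretically) rather than pure lattice bookkeeping. A secondary difficulty is the identification of $T(X)$ up to isometry: when the genus of $T(X)$ contains more than one class one must argue, e.g. via the explicit geometric realization or via $O(T(X)) \to O(D(T(X)))$ surjectivity and uniqueness results, that the lattice appearing in \autoref{tab:order_p_sympl_cubic} is the correct one. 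Everything else — the embeddings into $\bU^{\oplus 2}\oplus \bE_8^{\oplus 2}\oplus \bA_2$, the orthogonal complement computations, and the Gram matrix presentations in \autoref{appendix_gram_matrices} — is routine Nikulin-style calculation once the overlattice question is settled.
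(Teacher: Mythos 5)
Your overall architecture coincides with the paper's: dispose of $p\neq 3$ via \autoref{p_not_3}, then run a case-by-case analysis for $p=3$ using \autoref{lem:Z3_discrim} and \autoref{lem:d_cong_2_mod_6}, use the absence of short and long roots (\autoref{rmk:roots}) to single out the correct index-$3$ overlattice, and finish $T(X)$ by genus/uniqueness arguments. However, there is a genuine gap in the step you yourself flag as the main obstacle, and the concrete criterion you propose for it cannot work. You suggest deciding whether $A(X)$ is the overlattice by checking whether $A_{prim}(X)$ represents a vector $\omega$ with $\omega^2\equiv -3 \pmod 9$. That test only detects the existence of the \emph{abstract} overlattice $\widetilde{\Gamma}$ of \autoref{lem:Z3_discrim}; it says nothing about whether $\widetilde{\Gamma}\subseteq H^4(X,\mathbb{Z})$. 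The theorem itself contains the counterexample: $A_{prim}(X_3^3)$ and $A_{prim}(X_3^6)$ are isometric (both are the Coxeter--Todd lattice $K_{12}$), so any test run on the abstract isometry class of $A_{prim}$ returns the same verdict for both families, yet $A(X_3^3)$ is the index-$3$ overlattice while $A(X_3^6)=\langle\eta_X\rangle\oplus A_{prim}(X_3^6)$. What distinguishes them is the \emph{embedding} of $A_{prim}$ into $H^4_{prim}(X,\mathbb{Z})$, not its isometry class; for the same reason your claim that the discriminant form of $T(X)$ is ``essentially forced'' by $A_{prim}$ being $p$-elementary fails at $p=3$ (the two families have different $T(X)$).

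The inputs you are missing, and which the paper supplies, are: (a) for $\phi_3^3$, the geometric fact that $X$ has Eckardt points and hence contains planes (\autoref{eckardt_points_in_phi_3^3}), so $X\in\mathcal{C}_8$ with $8\equiv 2\ (6)$ and \autoref{lem:d_cong_2_mod_6} forces the glue; (b) for $\phi_3^4$, either the $243$ planes of \cite{koike2022cubic} or the purely lattice-theoretic observation that $\rank T(X)=4$ forces $l(D(A(X)))\leq 4<5=l(D(A_{prim}(X)))$, which already forces the index-$3$ extension; (c) the deduction that $\phi_3^6$ must then realize the split case. Your fallback appeal to the period map and forbidden roots addresses a different (and easier) question, namely \emph{which} index-$3$ extension occurs once one knows an extension occurs; it does not decide \emph{whether} one occurs. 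Until (a)--(c) are argued, the $p=3$ rows of \autoref{tab:order_p_sympl_cubic} and the corresponding Gram matrices are not established.
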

\begin{proof}
The automorphism \(\phi_p^i\) induces an isometry on \(H_{prim}^4(X,\mathbb{Z})\) and \((\phi_p^i,H_{prim}^4(X,\mathbb{Z})_{\phi_p^i})\) is a Leech pair (cf. \cite[Definition 3.3]{Laza.Zheng:aut}). Note that for a general cubic fourfold with the action of \(\phi_p^i\) it holds that \(A_{prim}(X)=H_{prim}^4(X,\mathbb{Z})_{\phi_p^i}\) and the lattice is in the classification \cite[Theorem 1.2]{Laza.Zheng:aut}. We recall that the Leech lattice has no vectors of square \(2\), ensuring that \(A_{prim}(X)\) contains no short roots accordingly to \autoref{rmk:roots}, while the condition of not containing long roots has its relevance in the computation of \(A(X)\). The lattice \(A_{prim}(X)\) is \(p\)-elementary and it corresponds to the cases No. 20, 52, and 120 of \cite[Table 1]{Hoehn.Mason:290.fixed.Leech} for the primes \(5,7\) and \(11\). 
By \autoref{p_not_3}, we deduce that \(A(X)=\langle\eta_X \rangle\oplus A_{prim}(X)\) whenever \(p\not=3\). 
It remains to discuss automorphisms of order \(p=3\). Here we use a case-by-case analysis. 
\begin{enumerate}
    \item Consider the Leech pair \((G,S)\) no. \(4\) in \cite[Table 1]{Hoehn.Mason:290.fixed.Leech} with \(G\) of order three. 
    The lattice \(S\) is the coinvariant lattice with respect to a Leech pair \((G,S)\) where \(G\) is a group of order three and \(S\) has rank \(12\). The lattice \(S\) is uniquely determined up to isometry and it turns out to be the same for the action of \(\phi_3^3\) and \(\phi_3^6\) on general cubic fourfolds in the families \(F_3^3\) and \(F_3^6\). We have \(S \cong A_{prim}(X_3^3)=A_{prim}(X_3^6)\) where \(X_3^3\in F_3^3\) and \(X_3^6\in F_3^6\) are general cubic fourfold in the families with automorphisms \(\phi_3^3\) and \(\phi_3^6\), respectively. 
    There are two possibilities for the algebraic lattice: it is either isometric to \([3]\oplus S\) or to an index \(3\) overlattice of that. 
    We observe that the cubic \(X_3^3\) contains planes (see \autoref{eckardt_points_in_phi_3^3} for more detail), and hence \(X_3^3\in\mathcal{C}_8\). 
    By \autoref{lem:Z3_discrim} and \autoref{lem:d_cong_2_mod_6}, it follows that the embedding \(\langle\eta_{X_3^3}\rangle\oplus A_{prim}(X_3^3) \subset A(X_3^3) \) is of index \(3\). To compute the lattice \(A(X_3^3)\), we compute the possible extensions of index \(3\) of \(\langle\eta_{X_3^3}\rangle\oplus A_{prim}(X_3^3)\) and exclude the cases where there are vectors of square \(1\), which correspond to having long roots in \(A_{prim}(X)\) by \cite[Lemma 4.4]{billigrossi2024non}, according to \autoref{rmk:roots}. This can be easily done for example using \cite{OSCAR}, however \(A(X_3^3)\) is the only index three extension of \(\langle\eta_{X_3^3}\rangle\oplus A_{prim}(X_3^3)\) with no vectors of square \(1\) and hence it suffices to exhibit such a lattice. We deduce that \(X_3^6\) corresponds to the other case, for which we have \(A(X_3^6)=\langle\eta_{X_3^6}\rangle\oplus A_{prim}(X_3^6)\).
    \item Consider the Leech pair \((G,S)\) no. 35 in \cite[Table 1]{Hoehn.Mason:290.fixed.Leech} with \(G\) of order three. 
    Let \(X_3^4\) be a general cubic fourfold in \(F_3^4\); we have that \(A_{prim}(X_3^4)\cong S\) has rank \(18\) and it is \(3\)-elementary with \(l(D(A_{prim}(X_3^4)))=5\). 
    In this case the transcendental lattice \(T(X_3^4)\) has rank \(4\) and, as a consequence, its orthogonal complement \(A(X_3^4)\) in the unimodular lattice \(H^4(X_3^4,\mathbb{Z})\) must have length \(l(D(A(X)))\leq 4\).
    It follows that the discriminant form of \(\langle\eta_X\rangle\) must be glued and \(\langle\eta_X\rangle\oplus  A_{prim}(X_3^4)\subset  A(X_3^4,\mathbb{Z})\) has index \(3\), as in \autoref{lem:d_cong_2_mod_6}. We compute the lattice \(A(X_3^4)\) by the same technique as before. Alternatively we also know that the cubic \(X_3^4\) contains planes, see \cite{koike2022cubic}, and we can argue as before. 
\end{enumerate}
We computed the lattice \(A(X)\) for any general cubic fourfold \(X\in F_p^i\), and the transcendental lattice \(T(X)\) is uniquely determined by the rank and the discriminant form of \(A(X)\). In fact we see that, in all cases except \(X\in F_3^4, F_7^1,F_{11}^1\), the lattice \(T(X)\) is indefinite with \(\rank T(X)\geq 3\) and \(\rank T(X)\geq l_q(T(X))+2\) for any prime \(q\), so that by \cite[Theorem 1.13.2]{Nikulin:int.sym.bilinear.forms} the lattice \(T(X)\) is uniquely determined. For the remaining cases, we need to argue separately. From \(l(A(X_3^4))=4\) we know that \(T(X_3^4)\) is 3-elementary with \(l(T(X_3^4))=4\), moreover we know that the discriminant form \(\delta_p\) of \(D(T(X_{p}^1))\) is anti-isometric to the one of \( D(A(X_{p}^1))\oplus D([3])\) for \(p=7,11\). We now use again \cite{OSCAR} to check the following: there is only one genus of even 3-elementary lattices of signature \((2,2)\) and length \(4\) with a unique isometry representative, there is a unique even lattice of signature \((2,2)\) with disciminant form \(\delta_7\), and there is a unique even lattice of signature \((0,2)\) with discriminant form \(\delta_{11}\). 
\end{proof}

\begin{center}
{\begin{longtable}{lllllll}
\caption{Description of the pairs \((A(X),T(X))\) for a general cubic fourfold \(X\in F_p^i\) with a symplectic automorphism \(\phi_p^i\) of prime order \(p\). The lattices \(A(X)\) are in \autoref{appendix_gram_matrices}.}
\label{tab:order_p_sympl_cubic} \\
		
		\toprule
	 No. & \(\rank(A(X))\) &  \(T(X)\)  & \(\sign(T(X))\) & \(l_p(D(A(X)))\) & \(p\)  \\

		\midrule
		\endfirsthead
		
		\multicolumn{7}{c}%
		{\tablename\ \thetable{}, follows from previous page} \\
	 No. & \(\rank(A(X))\) &  \(T(X)\)  & \(\sign(T(X))\) & \(l_p(D(A(X)))\) & \(p\)  \\
		
		\midrule
		\endhead
	
		\multicolumn{7}{c}{Continues on next page} \\
		\endfoot
		
		\bottomrule
		\endlastfoot
\(\phi_3^3\) & \(13\) & \( \bU(3)^{\oplus 2}\oplus\bE_6\)
& \((8,2)\) & \(5\)  & \(3\) \\

\(\phi_3^6\) & \(13\) & \( \bU(3)^{\oplus 2}\oplus\bA_2^{\oplus 3} \)
& \((8,2)\) & \(7\)  & \(3\) \\

\(\phi^4_3\) & \(19\) & \(\bU(3)^{\oplus 2} \)
& \((2,2)\) & \(4\)  & \(3\) \\
\hline
\(\phi^1_5\) & \(17\) & \(\bU(5)^{\oplus 2}\oplus \bA_2\) 
& \((4,2)\) & \(4\)  & \(5\) \\
\hline
\(\phi^1_7\) & \(19\) & \(\bU(7)\oplus \bH_{21}\) 
& \((2,2)\) & \(3\)  & \(7\) \\

\hline
\(\phi^1_{11}\) & \(21\) & \(\bA_2(-11)\)  
& \((0,2)\) & \(2\)  & \(11\) \\

	\midrule
    	\end{longtable}}
\end{center}

\begin{remark}\label{Ouchi}
In particular, one can see that in all cases there is a cohomologically associated K3 surface. It was already proved in \cite{Ouchi:Automorphisms.cubic.k3.category} that a cubic fourfold admitting a symplectic automorphism of order different from two has an associated K3 surface in the derived sense. This is equivalent to have an associated K3 surface in the cohomological sense by \cite[Theorem 1.1] {Addington.Thomas} and \cite[Corollary 1.7]{BLM21}.
\end{remark}
\begin{corollary}\label{rationality_symplectic}
    Let \(X\) be a cubic fourfold admitting a symplectic automorphism of prime order \(p\geq 3\), then \(X\) is rational.
\end{corollary}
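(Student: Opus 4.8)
The plan is to show that each family $F_p^i$ from \autoref{class_cubiche} is contained in a Hassett divisor $\mathcal{C}_d$ all of whose members are known to be rational, and then to invoke that rationality. The external inputs are that every cubic fourfold in $\mathcal{C}_{14}$ is rational (classically for general Pfaffian cubics, after Fano and Beauville--Donagi, and for the whole divisor by Bolognesi--Russo--Staglian\`{o}) and that every cubic fourfold in $\mathcal{C}_{26}$, $\mathcal{C}_{38}$ and $\mathcal{C}_{42}$ is rational by Russo--Staglian\`{o}. These integers are all admissible (congruent to $0$ or $2$ modulo $6$), so the whole matter reduces to a lattice-theoretic statement about the $A(X)$ of \autoref{algebraic_lattices}: for the general member $X$ of each family, $A(X)$ should contain a primitive rank two sublattice $K$ with $\eta_X\in K$ and $d(K)\in\{14,26,38,42\}$. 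This is also consistent with \autoref{Ouchi}, as cubics in these divisors all carry an associated K3 surface.

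First I would treat the general member $X\in F_p^i$. By \autoref{algebraic_lattices} the lattice $A(X)$ is explicit, and by \autoref{generated_by_scrolls}, \autoref{eckardt_points_in_phi_3^3} and \autoref{eckardt_points_in_phi_3^4} it is spanned by $\eta_X$ together with classes of planes (in the families $F_3^3$ and $F_3^4$) or of cubic scrolls (in the remaining families). Using $[P]^2=3$, $\eta_X\cdot[P]=1$ for a plane $P$ and $[\Sigma]^2=7$, $\eta_X\cdot[\Sigma]=3$ for a cubic scroll $\Sigma$, one builds $K$ from two disjoint such surfaces: two disjoint planes $P_1,P_2$ give $\langle\eta_X,[P_1]+[P_2]\rangle$ of discriminant $14$; two disjoint cubic scrolls $\Sigma_1,\Sigma_2$ give $\langle\eta_X,[\Sigma_1]-[\Sigma_2]\rangle$ of discriminant $42$; and a plane disjoint from a cubic scroll gives discriminant $14$ via $[P]+[\Sigma]$ or $26$ via $[P]-[\Sigma]$. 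In each case $K$ is primitive, since the rank three lattice spanned by $\eta_X$ and the two surface classes has torsion-free quotient by $K$. Concretely one reads this off from the Gram matrices of \autoref{appendix_gram_matrices} with \cite{OSCAR}: it suffices to produce a vector $v\in A(X)$ with $\langle\eta_X,v\rangle$ primitive of discriminant $14$ or $42$, and this is a finite check because $A(X)$ is positive definite (its signature is forced by \autoref{tab:order_p_sympl_cubic}), with \autoref{rmk:roots} cutting down the candidates. Hence the general $X\in F_p^i$ lies in $\mathcal{C}_{14}$ when $p=3$ and in $\mathcal{C}_{42}$ when $p\in\{5,7,11\}$.

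It then remains to drop the word \emph{general}. The families $F_p^i$ are irreducible and their general members form a dense subset, while $\mathcal{C}_d$ is Zariski closed in the moduli space of cubic fourfolds; therefore $F_p^i\subseteq\mathcal{C}_d$, and \emph{every} cubic fourfold admitting a symplectic automorphism of prime order $p\geq 3$ lies in $\mathcal{C}_{14}\cup\mathcal{C}_{42}$, hence is rational by the results recalled above. Equivalently, a non-general member $X$ has $A(X)$ containing a copy of the general algebraic lattice, hence the sublattice $K$; since $d(K)\in\{14,42\}$ is squarefree, the saturation of $K$ in $A(X)$ still has discriminant $d(K)$, so equals $K$, and $X\in\mathcal{C}_{d(K)}$.

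The main obstacle is the middle step: checking family by family that $A(X)$ genuinely contains such a primitive rank two sublattice through $\eta_X$, equivalently that the general member carries a suitable pair of planes and/or cubic scrolls — crucially, a pair that is disjoint. The families $F_3^4$, $F_7^1$, $F_{11}^1$, where $\rank T(X)$ is small and $D(A(X))$ has large length, look the most delicate; for these I would lean on the explicit Gram matrices and a machine-assisted search in \cite{OSCAR}, using \autoref{rmk:roots} to rule out spurious candidates.
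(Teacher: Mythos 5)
Your proposal follows essentially the same route as the paper: exhibit a rank-two sublattice \(\eta_X\in K\subseteq A(X)\) of discriminant \(14\) (from two disjoint planes, for \(F_3^3\) and \(F_3^4\)) or \(42\) (from a vector of square \(14\) in \(A_{prim}(X)\), for the remaining families), invoke the rationality of \(\mathcal{C}_{14}\) and \(\mathcal{C}_{42}\), and note that primitivity of \(K\) is automatic because \(14\) and \(42\) are squarefree; your closing paragraph removing the genericity assumption (via closedness of \(\mathcal{C}_d\), or via the inclusion of the general coinvariant lattice into every member's \(A(X)\)) is a point the paper leaves implicit. One correction: the general member of \(F_3^6\) does \emph{not} lie in \(\mathcal{C}_{14}\) --- by \autoref{lem:d_cong_2_mod_6} it lies in no \(\mathcal{C}_d\) with \(d\equiv 2\ (6)\), since \(A(X_3^6)=\langle\eta_{X_3^6}\rangle\oplus A_{prim}(X_3^6)\) --- so your sorting ``\(\mathcal{C}_{14}\) when \(p=3\)'' should read ``\(\mathcal{C}_{14}\) for \(F_3^3,F_3^4\) and \(\mathcal{C}_{42}\) for \(F_3^6\)'', which is what your own cubic-scroll computation in fact produces.
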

\begin{proof}
    We use the description of the Gram matrix \(A(X)\) in \autoref{appendix_gram_matrices} for a general cubic fourfold \(X\) in one of the families \(F_p^i\) to show that \(X\in\mathcal{C}_{14}\) or \(X\in \mathcal{C}_{42}\).  By \cite{bolognesi2019some} and \cite{RS_trisecant} we know that cubic fourfolds in these Hassett divisors are rational. If \(X\in F_3^3\) or \(X\in F_3^4\), then \(X\) contains disjoint planes by \autoref{eckardt_points_in_phi_3^3} and \autoref{eckardt_points_in_phi_3^4}, hence \(X\) is rational and in particular \(X\in \mathcal{C}_{14}\). In all the other cases we have \(A(X)=\langle \eta_X \rangle\oplus A_{prim}(X)\) and \(X\not \in \mathcal{C}_{14}\), but it is easy to find a primitive sublattice of \(A_{prim}(X)\) with Gram matrix 
    \[\begin{pmatrix}
    4 & 1 & 0 \\
    1 &4 & 0\\
    0 & 0 & 4
\end{pmatrix},\] the sum of the generators is a vector \(v\in A_{prim}(X)\) with \(v^2=14\) and the lattice \(K=\langle \eta_X,v\rangle \subseteq A(X)\) gives \(X\in \mathcal{C}_{42}\). 
\end{proof}
\section{Cubic fourfolds containing cubic scrolls}\label{cubics_with_scrolls}

All the cubic fourfolds with a symplectic automorphism of prime order belong to the divisor \(\mathcal{C}_{12}\), which is the closure of the locus of cubic fourfolds containing a cubic scroll. By \autoref{lem:d_cong_2_mod_6}, \autoref{p_not_3} and \autoref{algebraic_lattices}, the general cubic with automorphism \(\phi_3^6,\phi_5^1,\phi_5^1\) or \(\phi_{11}^1\) does not belong to \(\mathcal{C}_8\) and hence it does not contain planes. In the following we prove that these cubics contain cubic scrolls, and their classes generate the algebraic lattice.

\begin{proposition} \label{generated_by_scrolls}
    Let \(X\) be a general cubic fourfold in one of the families \(F_3^6,F_5^1,F_7^1\) or \(F_{11}^1\). Then \(X\) contains \(k\) families of cubic scrolls \(\{T_i,T^\vee_i\}^{k}_{i=1}\) such that \([T_i]+[T^\vee_i]=2\eta_X\) where \(k=378, 1320, 2709\) or \(6270\) respectively. Moreover, the algebraic lattice \(A(X)\) is generated by the classes \([T_i]\) for \(i=1,\dots,k\).
\end{proposition}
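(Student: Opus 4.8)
The plan is to split the proof into a lattice-theoretic description of which classes of $A(X)$ can be represented by cubic scrolls, a geometric step realizing each such class by an honest cubic scroll, and a short computation showing that these classes generate $A(X)$. For $X$ general in $F_p^i$ with $(p,i)\in\{(3,6),(5,1),(7,1),(11,1)\}$, \autoref{p_not_3}, \autoref{algebraic_lattices} and \autoref{lem:d_cong_2_mod_6} give $A(X)=\langle\eta_X\rangle\oplus A_{prim}(X)$ and $X\notin\mathcal{C}_8$, so $X$ contains no plane. If $T\subset X$ is a cubic scroll, then $T$ spans a hyperplane $H\subset\mathbb{P}^5$, and (see \cite{hassett2000special}) $[T]\cdot\eta_X=\deg T=3$ and $[T]^2=7$; hence $\langle\eta_X,[T]\rangle$ has discriminant $12$ and $[T]$ is primitive of divisibility $1$ in $H^4(X,\mathbb{Z})$ (being primitive in $A(X)$, which is primitively embedded in the unimodular lattice $H^4(X,\mathbb{Z})$). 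Writing $[T]=a\eta_X+w$ with $w\in A_{prim}(X)$, the relations $[T]\cdot\eta_X=3$, $[T]^2=7$ force $a=1$ and $w^2=4$. So every cubic-scroll class has the form $\eta_X+w$ with $w\in A_{prim}(X)$, $w^2=4$, and the residual to $T$ in $X\cap H\cap Q$ (for a quadric $Q\supset T$) is a cubic scroll of class $2\eta_X-(\eta_X+w)=\eta_X-w$.

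The converse realization is the heart of the matter: for every $w\in A_{prim}(X)$ with $w^2=4$, the class $\eta_X+w$ (equivalently its residual $\eta_X-w$) must be represented by a family of cubic scrolls. Since $\langle\eta_X,\eta_X+w\rangle$ is a discriminant-$12$ marking, $X$ lies in $\mathcal{C}_{12}$ with respect to it; as $X$ contains no plane, I would invoke the description of $\mathcal{C}_{12}$ as the closure of the locus of cubic fourfolds containing a cubic scroll \cite{hassett2000special}, the integral Hodge conjecture for cubic fourfolds to get effectivity of $\eta_X+w$ or $\eta_X-w$ for our specific $X$, and the genericity of $X$ inside $F_p^i$ together with $F_p^i\not\subseteq\mathcal{C}_8$ to ensure that the representing surface is an honest cubic scroll rather than a degeneration; alternatively one writes down explicit cubic scrolls on the normal forms of \autoref{class_cubiche} and propagates them under $\Aut(X)$. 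Applying this to $w$ and to $-w$ produces scrolls $T_i,T_i^\vee$ with $[T_i]=\eta_X+w_i$, $[T_i^\vee]=\eta_X-w_i$ and $[T_i]+[T_i^\vee]=2\eta_X$. The number of pairs is $k=\tfrac12\#\{w\in A_{prim}(X)\mid w^2=4\}$, which I would compute directly from the Gram matrices of \autoref{algebraic_lattices} and \autoref{appendix_gram_matrices} using \cite{OSCAR}, obtaining $k=378,1320,2709,6270$ in the four cases.

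For the generation statement, set $\Lambda=\langle[T_1],\dots,[T_k]\rangle\subseteq A(X)$ and write $[T_i]=\eta_X+w_i$. In the splitting $A(X)=\langle\eta_X\rangle\oplus A_{prim}(X)$ every $[T_i]$ has $\eta_X$-coordinate $1$, so the projection $\Lambda\to\langle\eta_X\rangle$ is surjective, with kernel $\{\sum_i a_i(\eta_X+w_i)\mid\sum_i a_i=0\}=\langle w_i-w_j\mid i,j\rangle=:M_0\subseteq A_{prim}(X)$. Because the set $R$ of norm-$4$ vectors of $A_{prim}(X)$ satisfies $R=-R$, one also has $M_0=\langle w+w'\mid w,w'\in R\rangle$; moreover, if some $w\in R$ admits $u\in R$ with $w\cdot u=2$, then $w-u\in R$ and $w=u+(w-u)\in M_0$. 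A direct check on the explicit Gram matrices shows that the norm-$4$ vectors span $A_{prim}(X)$ over $\mathbb{Z}$ and satisfy this pairing condition, so that $M_0=A_{prim}(X)$; since then $\Lambda\supseteq A_{prim}(X)$ and $\Lambda$ surjects onto $\langle\eta_X\rangle$, we conclude $\Lambda=\langle\eta_X\rangle\oplus A_{prim}(X)=A(X)$.

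I expect the main obstacle to be the geometric step of the second paragraph: promoting ``$\eta_X+w$ is of $\mathcal{C}_{12}$-type, hence a Hodge class, hence effective by the integral Hodge conjecture'' to ``$\eta_X+w$ is represented by an irreducible cubic scroll'', uniformly over all $2k$ classes. This is where the genericity of $X$ inside $F_p^i$ and the absence of planes have to be used carefully, and in borderline cases one may need to resort to the explicit equations.
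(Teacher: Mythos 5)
Your reduction to norm-$4$ vectors of $A_{prim}(X)$ (via $[T]\cdot\eta_X=3$, $[T]^2=7$, hence $[T]=\eta_X+w$ with $w^2=4$) and the final computer verification of the count and of generation match the paper's strategy. The genuine gap is exactly the step you flag yourself: showing that \emph{every} norm-$4$ vector $w\in A_{prim}(X)$ is realized by an actual family of cubic scrolls. Your proposed bridge --- ``$\langle\eta_X,\eta_X+w\rangle$ is a discriminant-$12$ marking, so invoke the description of $\mathcal{C}_{12}$, the integral Hodge conjecture, and genericity'' --- does not close it: the integral Hodge conjecture only makes $\eta_X\pm w$ an algebraic class (a $\mathbb{Z}$-combination of classes of surfaces), not an effective class represented by an irreducible cubic scroll; and the statement that a cubic fourfold in $\mathcal{C}_{12}$ contains a cubic scroll concerns the \emph{general} member of that divisor, whereas your $X$ is general only inside the much smaller family $F_p^i$ and carries $k$ distinct discriminant-$12$ markings simultaneously, so no single genericity assumption covers all of them. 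The paper fills this step by citation: by the proof of \cite[Lemma 2.11]{brooke2025lines}, used as in \cite[Propositions 4.10, 4.11]{billigrossi2024non}, classes of cubic scrolls in $X$ correspond to classes of extremal rational curves on the Fano variety of lines $F(X)$ of square $-10$ and divisibility $2$ in $H^{1,1}(F(X),\mathbb{Z})_{prim}$, which correspond precisely to norm-$4$ vectors of $A_{prim}(X)$. That correspondence is the missing ingredient; without it (or the explicit-equations alternative, which you also do not carry out) the realization direction remains unproved.

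A smaller point: in your generation argument the kernel $M_0$ of $\Lambda\to\langle\eta_X\rangle$ is $\langle w_i-w_j\rangle$ taken over the \emph{chosen representatives} $w_1,\dots,w_k$ (one per $\pm$-pair), and the identity $M_0=\langle w+w'\mid w,w'\in R\rangle$ is false in general (for instance $2w_1=w_1+w_1$ need not lie in $\langle w_a-w_b\rangle$). Since you, like the paper, ultimately verify generation by a direct computation on the Gram matrices, this does not affect the conclusion, but the intermediate reduction should be dropped or repaired --- most simply by working with all $2k$ classes $\eta_X\pm w_i$, which is what the paper's computer check actually does.
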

\begin{proof}
    We argue as in the proofs of \cite[Proposition 4.10, Proposition 4.11]{billigrossi2024non}: using the proof of 
\cite[Lemma 2.11]{brooke2025lines} classes of cubic scrolls in \(X\) correspond to classes of extremal rational curves in the Fano variety of lines \(F(X)\) with the right numerical properties, i.e. square \(-10\) and divisibility \(2\) in \(H^{1,1}(F(X),\mathbb{Z})_{prim}\) which correspond to vectors of square \(4\) in \(A_{prim}(X)\). The families of cubic scrolls come in pairs and, using the matrix description of the lattice \(A_{prim}(X)=\langle\eta_X\rangle^{\perp_{A(X)}}\) in \autoref{algebraic_lattices}, it is easy to check (using a computer) that there are \(2k\) such vectors which generate the entire lattice. 
\end{proof}

\begin{remark}
    The number of families of cubic scrolls was also computed implicitly in \cite[Table 5]{marquand2024finitegroupssymplecticbirational}.
\end{remark}

The group of symplectic automorphisms of the cubic we consider is known by lattice theoretical arguments, see \cite{Laza.Zheng:aut}. For a general cubic fourfold \(X\) in the family \(F_3^6\) we have that \(\Aut_S(X)=\langle\phi_3^6\rangle\), but in the other cases the group of symplectic automorphisms is bigger and we give explicit generators for the group.

\begin{proposition}\label{prop: aut grp}
There is the following description of automorphism groups:
\begin{itemize}
    \item  Let \(X\) be a general cubic fourfold in the family \(F_5^1\), then \(\Aut_S(X)\cong D_5\) the dihedral group (of \(10\) elements) and it is generated by \(\phi_5^1\) together with the symplectic involution
    \[ \tau=\begin{pmatrix}
    1 & 0 & 0 & 0 & 0 & 0 \\ 
    0 & 1 & 0 & 0 & 0 & 0\\
    0 & 0 & 0 & 0 & 0 & 1\\
    0 & 0 & 0 & 0 & 1 & 0\\
    0 & 0 & 0 & 1 & 0 & 0\\
    0 & 0 & 1 & 0 & 0 & 0
\end{pmatrix}.
\] In particular, there is an inclusion of families \(F_5^1\subset F_2^2\).
\item Let \(X\) be a general cubic in the family \(F_7^1\), then \(\Aut_S(X)= \langle \phi_7^1\rangle \rtimes \langle\tau\rangle \) where
\[\tau=\begin{pmatrix}
    0 & 1 & 0 & 0 & 0 & 0 \\ 
    0 & 0 & 0 & 1 & 0 & 0\\
    0 & 0 & 0 & 0 & 0 & 1\\
    1 & 0 & 0 & 0 & 0 & 0\\
    0 & 0 & 1 & 0 & 0 & 0\\
    0 & 0 & 0 & 0 & 1 & 0
\end{pmatrix}\] is an order three symplectic automorphism. In particular, there is an inclusion of families \(F_7^1\subset F_3^6\).
\item Let \(X\) be a general cubic fourfold in the family \(F_{11}^1\), then \(\Aut_S(X)\cong L_2(11)\) it is generated by \(\phi_{11}^1\) and the symplectic automorphism of order five 
\[ \tau=\begin{pmatrix}
    1 & 0 & 0 & 0 & 0 & 0 \\ 
    0 & 0 & 1 & 0 & 0 & 0\\
    0 & 0 & 0 & 0 & 0 & 1\\
    0 & 1 & 0 & 0 & 0 & 0\\
    0 & 0 & 0 & 1 & 0 & 0\\
    0 & 0 & 0 & 0 & 1 & 0
\end{pmatrix}.\] Moreover, we have that \( X\in F_2^2\cap F_3^6 \cap F_5^1\).
\end{itemize}
\end{proposition}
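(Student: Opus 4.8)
The plan is to verify each of the three group descriptions by first exhibiting the explicit matrices $\tau$ as honest automorphisms of the general member of the respective family, then identifying the abstract group they generate together with $\phi_p^1$, and finally matching it with the known order of $\mathrm{Aut}_S(X)$ coming from the lattice-theoretic classification in \cite{Laza.Zheng:aut}. For the first part, I would take the normal form $F$ of the defining polynomial given in \autoref{class_cubiche} for each of $F_5^1$, $F_7^1$, $F_{11}^1$, substitute the coordinate permutation prescribed by $\tau$, and check directly that $F$ is sent to a scalar multiple of itself (for $F_{11}^1$, which is rigid, this is a finite check; for $F_5^1$ and $F_7^1$ one checks that the monomials appearing are permuted among themselves and that the coefficient functions $L_i, M_i$ of the general member can be — and, for the \emph{general} member, are forced to be — invariant, so that $\tau$ preserves the whole subfamily). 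One must also confirm that $\tau$ is symplectic, e.g. by checking it acts trivially on the generator of $H^{3,1}$, equivalently that $\det$ of the action on the Jacobian ring piece of degree giving $H^{3,1}$ is trivial; since $\tau$ is a permutation matrix of the right parity this is immediate.

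Next I would compute the group $\langle \phi_p^1, \tau\rangle \subseteq \mathrm{PGL}_6$ abstractly. In the $p=5$ case, $\tau$ is an involution and a direct computation of $\tau \phi_5^1 \tau^{-1}$ shows it equals $(\phi_5^1)^{-1}$ (or another power generating the same cyclic group), giving the dihedral relation, so $\langle\phi_5^1,\tau\rangle \cong D_5$ of order $10$. In the $p=7$ case one checks $\tau^3 = \mathrm{id}$ (in $\mathrm{PGL}_6$) and that $\tau$ normalizes $\langle\phi_7^1\rangle$, yielding $\langle\phi_7^1\rangle\rtimes\langle\tau\rangle$ of order $21$; here one should also record the conjugation action $\tau\phi_7^1\tau^{-1} = (\phi_7^1)^k$ for the appropriate $k$ to pin down the semidirect product. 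In the $p=11$ case $\tau$ has order $5$, and the group generated by an order-$11$ and an order-$5$ element with the relations coming from the matrices is $\mathrm{PSL}_2(11) = L_2(11)$ of order $660$; I would verify this by exhibiting the standard presentation of $L_2(11)$ (two generators of orders $11$ and $5$ with a known braid-type relation) and matching it against the matrix computation, or simply by computing the order of the matrix group in $\mathrm{PGL}_6$ and invoking the fact that a group of that order generated by such elements is $L_2(11)$.

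The matching step uses that for a \emph{general} member of $F_p^1$ the classification of \cite{Laza.Zheng:aut} (via the associated Leech pair, as used in \autoref{algebraic_lattices}) determines $\mathrm{Aut}_S(X)$ up to isomorphism and in particular its order; since $\langle\phi_p^1,\tau\rangle$ is a subgroup of that realized order, the inclusion is an equality. The inclusions of families $F_5^1 \subset F_2^2$, $F_7^1\subset F_3^6$, and $X\in F_2^2\cap F_3^6\cap F_5^1$ for $X\in F_{11}^1$ then follow by exhibiting, inside $\mathrm{Aut}_S(X)$, an element of the relevant order whose eigenvalue type on $\mathbb{C}^6$ matches the $\sigma$-datum of \autoref{class_cubiche}: for $F_5^1\subset F_2^2$ the involution $\tau$ itself has the diagonalized form $\sigma=(0,0,0,0,1,1)$ after a change of coordinates (it is a product of two transpositions); for $F_7^1\subset F_3^6$ the order-three $\tau$ has eigenvalue pattern $\sigma=(0,0,1,1,2,2)$; and for $F_{11}^1$ one exhibits elements of orders $2$, $3$, $5$ inside $L_2(11)$ with the corresponding patterns, all of which exist since $L_2(11)$ contains such elements and their action on $\mathbb{C}^6$ is constrained by the character. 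The main obstacle I expect is the $p=11$ case: verifying that the matrix group is exactly $L_2(11)$ rather than a larger group requires either a careful presentation-matching argument or an explicit (computer-assisted) order computation, and then one must still check that the three subgroups of orders $2,3,5$ act with exactly the eigenvalue types recorded in \autoref{class_cubiche} — a representation-theoretic bookkeeping that is routine but must be done with care since a priori an order-$2$ element of $\mathrm{PGL}_6$ could lift to several distinct $\sigma$-types.
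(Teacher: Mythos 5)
Your proposal is correct and follows the same overall strategy as the paper: exhibit the explicit $\tau$, check it preserves the normal form of \autoref{class_cubiche} and is symplectic, verify the group relations, and conclude equality with $\Aut_S(X)$ because that group is already known abstractly from the lattice-theoretic classification of \cite{Laza.Zheng:aut} (the paper invokes this in the paragraph preceding the proposition, exactly as your ``matching step'' does; it also records that the matrix for the $p=11$ case comes from the unique irreducible $5$-dimensional representation of $L_2(11)$ in the ATLAS). The one genuine divergence is in the step you yourself flag as the main obstacle: identifying the $\sigma$-types of the order-$2$ and order-$3$ elements of $L_2(11)$ for the inclusions $X\in F_2^2\cap F_3^6$. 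You propose character-theoretic bookkeeping on $\mathbb{C}^6$; this works (the $6$-dimensional representation is trivial plus the $5$-dimensional irreducible, and the character values at elements of order $2$ and $3$ force $\sigma=(0,0,0,0,1,1)$ and $(0,0,1,1,2,2)$ respectively), but the paper sidesteps it entirely with a geometric argument: for order $2$ there is only one symplectic type $\phi_2^2$, and for order $3$ the types $\phi_3^3$ and $\phi_3^4$ are excluded because cubics in those families contain planes, i.e.\ lie in $\mathcal{C}_8$, whereas $X\not\in\mathcal{C}_8$ by \autoref{p_not_3}. The paper's route buys you robustness (no lift-ambiguity issues in $\PGL_6$ to worry about) at the cost of relying on the earlier lattice computation; your route is self-contained representation theory but requires the careful eigenvalue analysis you acknowledge. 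Either is acceptable; just make sure that if you go the character route you fix the correct lift of each element to $\GL_6$ (the trivial summand pins this down) before reading off $\sigma$.
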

\begin{proof}
It is easy to check that in all cases \(\tau\) is symplectic, it has the wanted order, and that it satisfies the group relations. We consider the case of the automorphism \(\phi^1_{11}\), where the statement is not as direct as the other cases.
    The matrix description is deduced by the description of its unique irreducible \(5\)-dimensional representation (see for example \cite{ATLAS}). The symplectic automorphism \(\tau\) has order \(5\), hence \(X\in F_5^1\). The group \(L_2(11)\) has order \(660\), hence it admits symplectic automorphisms of order \(2\) and \(3\). It follows that \(X\in F_2^2\) and that \(X\in F_3^6\) since cubic fourfolds in \(F_3^3\) and \(F_3^4\) contain planes, but \(X\not\in\mathcal{C}_8\) by \autoref{p_not_3}. 
\end{proof}

 \begin{remark}
     Recently, the possibilities for the full group of automorphisms for a cubic fourfold in the family $F_7^1$ have been classified \cite{he2025cubicfourfoldsorder7automorphism}.
 \end{remark}

\begin{remark}
    Note that if \(F(X)\) admits a symplectic automorphism of order \(p=5,7,11\) that is induced by an automorphism of cubic fourfold \(X\), then the entire group \(G=D_{10}, C_7:C_3,L_2(11)\) is a symplectic group of automorphisms of \(F(X)\) and is completely induced by automorphisms of \(X\). Induced automorphisms on irreducible holomorphic symplectic varieties of $K3^{[2]}$ type have been well studied, for more examples see \cite{Mong13,Hoehn.Mason:finite.groups.sp.aut.K3^2,fu2016classification}. 
\end{remark}

The cubic \(X\) with automorphism \(\phi_{11}^1\) is the triple cover of \(\mathbb{P}^4\) ramified on the Klein cubic threefold, the group of symplectic automorphisms is known to be isomorphic to \(L_2(11)=\PSL(2,\mathbb{F}_{11})\), see \cite{Adler,Laza.Zheng:aut}. The group \(\Aut(X)/\Aut_S(X)\) has order three generated by the covering automorphism, in particular \(X\in F_3^1\).

 We denote by \(Y_A\) the double EPW-sextic associated to the Klein Lagrangian \(A\) as in \cite{debarre2022gushel}, with symplectic group of polarized automorphisms given by \(L_2(11)\). We also observe that for a cubic fourfold \(X\) with automorphism \(\phi_{11}^1\), the group of symplectic polarized automorphisms of the Fano variety of lines \(F(X)\) is also given by \(L_2(11)\). 
 \begin{proposition}
     Let \(X\) be a cubic fourfold with automorphism \(\phi_{11}^1\). Then the Fano variety of lines \(F(X)\) is not birational to the double EPW-sextic \(Y_A\).
 \end{proposition}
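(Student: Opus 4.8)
The plan is to rule out birationality by comparing transcendental lattices. Both $F(X)$ and the double EPW sextic $Y_A$ are irreducible holomorphic symplectic manifolds of $K3^{[2]}$-type, and a birational map of such manifolds is an isomorphism away from a locus of codimension at least two, so it induces a Hodge isometry $H^2(F(X),\mathbb{Z})\cong H^2(Y_A,\mathbb{Z})$; in particular it restricts to an isometry of integral Hodge structures $T(F(X))\cong T(Y_A)$. Hence it suffices to prove that these two lattices are not isometric.

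First I would compute $T(F(X))$. For $X$ the Klein cubic fourfold, which is the unique member of the family $F_{11}^1$ (since $d=0$ there), \autoref{algebraic_lattices} and the row $\phi_{11}^1$ of \autoref{tab:order_p_sympl_cubic} give $T(X)\cong \bA_2(-11)$, a negative definite even lattice of rank $2$ with $d(T(X))=363=3\cdot 11^2$. The Abel--Jacobi map restricts to a Hodge isometry $\alpha\colon H^4_{prim}(X,\mathbb{Z})\xrightarrow{\ \sim\ }H^2_{prim}(F(X),\mathbb{Z})$ with $q(\alpha x,\alpha y)=-x\cdot y$, and since the Plücker class $g$ is algebraic we have $T(X)\subseteq H^4_{prim}(X,\mathbb{Z})$ and $T(F(X))=\alpha(T(X))\cong T(X)(-1)=\bA_2(11)$, an even positive definite rank $2$ lattice with $d(T(F(X)))=363$. (Equivalently, the transcendental Hodge structure of $F(X)$ has complex multiplication by $\mathbb{Q}(\sqrt{-3})$, which is forced by the non-symplectic order three covering automorphism, cf.\ $X\in F_3^1$ above.)

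It then remains to identify $T(Y_A)$ and to check that it is different. I would take the computation of the second integral cohomology and of the transcendental Hodge structure of the Klein double EPW sextic from \cite{debarre2022gushel}; alternatively, one can reconstruct it from the facts that $Y_A$ is rigid, carries $L_2(11)$ as a group of symplectic automorphisms with the same coinvariant lattice as $F(X)$, and is polarised by a class $h$ with $q(h)=2$ and divisibility $1$. One finds that $T(Y_A)$ is an even positive definite rank $2$ lattice whose discriminant is not $363$ (indeed its Hodge structure has complex multiplication by $\mathbb{Q}(\sqrt{-11})$, the natural field attached to $L_2(11)$). Since isometric lattices have the same discriminant, $T(F(X))\not\cong T(Y_A)$, and therefore $F(X)$ is not birational to $Y_A$.

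The main obstacle is precisely this last identification of $T(Y_A)$; once the transcendental Hodge structure of the Klein double EPW sextic is in hand the argument is immediate. Should it turn out that $d(T(Y_A))=363$ as well, one would instead distinguish $\bA_2(11)$ from $T(Y_A)$ inside their common genus, or — more robustly — observe that $\mathrm{NS}(F(X))=T(F(X))^{\perp}$ inside the $K3^{[2]}$-lattice $\bU^{\oplus 3}\oplus\bE_8^{\oplus 2}\oplus[-2]$ represents no class of square $2$ and divisibility $1$, whereas every double EPW sextic carries such a polarisation.
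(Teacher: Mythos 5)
Your proposal follows essentially the same route as the paper: both arguments reduce non-birationality to non-isometry of the transcendental lattices (a birational map of IHS manifolds induces a Hodge isometry on $H^2$), identify $T(F(X))\cong\bA_2(11)$ of discriminant $363$ via the Abel--Jacobi map, and quote \cite{debarre2022gushel} for $T(Y_A)$, which the paper records as $[22]\oplus[22]$ of discriminant $484\neq 363$, exactly confirming your discriminant comparison. The only slip is the parenthetical guess that $T(Y_A)$ has complex multiplication by $\mathbb{Q}(\sqrt{-11})$ --- for $[22]\oplus[22]$ the CM field is $\mathbb{Q}(\sqrt{-1})$ --- but this aside plays no role in the argument.
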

 \begin{proof}
     According to \cite{debarre2022gushel}, the transcendental lattice of \(Y_A\) is given by \[T(Y_A)\cong[22]\oplus[22]\cong\begin{pmatrix}
    22 & 0 \\
    0 &22
\end{pmatrix},\]

while the transcendental lattice of \(F(X)\) is 
\[T(F(X))\cong\bA_2(11)\cong \begin{pmatrix}
    22 & 11 \\
    11 &22
\end{pmatrix}.\]
 \end{proof}

\section{Cubic fourfolds containing planes}\label{cubics_with_planes}

In this section we study cubic fourfolds with automorphisms \(\phi_3^3\) and \(\phi_3^4\). We prove that the general such cubic contains disjoint planes, and hence is rational. In fact, we prove that a cubic fourfold with the automorphism \(\phi_3^4\) also admits the automorphism \(\phi_3^3\). The existence of planes in such cubic fourfolds is due to the existence of Eckardt points. By studying the Eckardt points, we can deduce information on the configuration of the corresponding planes.

\begin{definition}
Let \(X\) be a smooth cubic fourfold. We say that \(P\in X\) is an \textit{Eckardt point} if \(P\) has multiplicity \(3\) in \(T_PX\cap X\), equivalently if \(T_PX\cap X\) is a cone with vertex \(P\) over a cubic surface.  
\end{definition}
It is proved in \cite{Laza.Perlstein.Zheng:Eckardt} that, for a cubic fourfold \(X\), having an Eckardt point \(P\in X\) is equivalent to being invariant for the anti-symplectic involution \([x_0:\dots:x_4:x_5]\mapsto [x_0:\dots:x_4:-x_5]\) after a suitable change of coordinates, where the Eckardt point is given by \(P=[0:\dots:0:1]\). We will refer to such an involution as an \textit{Eckardt involution}, this is of the form \(\phi_1\) referring to \cite{marquand2023cubic}.

According to \cite{Laza.Zheng:aut}, for a general cubic fourfold with automorphism \(\phi_3^3\) we have \(\Aut_S(X)=\langle\phi_3^3\rangle\). 
\begin{proposition}\label{eckardt_points_in_phi_3^3}
    A general cubic fourfold \(X\) in the family \(F_3^3\) contains three Eckardt points. It contains exactly \(81\) planes that generate the lattice \(A(X)\). Moreover, \(X\) contains two disjoint planes and it is rational. 
\end{proposition}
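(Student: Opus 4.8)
The plan is to locate the three Eckardt points from the normal form, cone the $27$ lines of an associated cubic surface at each of them to obtain $81$ planes, and then read off two disjoint planes and rationality. In the normal form $F=L_3(x_0,\dots,x_3)+x_4^3+x_5^3+x_4x_5\,M_1(x_0,\dots,x_3)$ of \autoref{class_cubiche}, the transposition $\iota_0\colon x_4\leftrightarrow x_5$ preserves $F$; its $(-1)$-eigenline is spanned by $e_4-e_5$, the corresponding point is $P_0=[0:0:0:0:1:-1]\in X$, and in the coordinates $y_0=x_0,\dots,y_3=x_3$, $y_4=x_4+x_5$, $y_5=x_4-x_5$ the involution $\iota_0$ takes the form $y_5\mapsto-y_5$ with $P_0=[0:\dots:0:1]$, so $P_0$ is an Eckardt point by \cite{Laza.Perlstein.Zheng:Eckardt}. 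Conjugating $\iota_0$ by the powers of $\phi_3^3$ produces two further Eckardt involutions whose special points $P_1,P_2$ are the remaining two points of $X$ on the trisecant line $\{x_0=\dots=x_3=0\}=\{x_4^3+x_5^3=0\}$; for general $X$ these three are all the Eckardt points (e.g.\ by a dimension count on $F_3^3$, or since $\Aut(X)\cong S_3$ has exactly three involutions). Since each $P_k$ is an Eckardt point, $T_{P_k}X\cap X$ is the cone with vertex $P_k$ over a cubic surface $S_k$, which a short substitution identifies with $\{L_3+c\,M_1^3=0\}\subset\mathbb{P}^3$ for a nonzero constant $c$; as $L_3$ is general, $S_k$ is smooth and carries exactly $27$ lines.

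Next I would produce the planes. Coning each of the $27$ lines of the smooth cubic surface $S_k$ with the vertex $P_k$ yields a plane contained in $T_{P_k}X\cap X\subset X$ and passing through $P_k$, hence $27$ planes through each of $P_0,P_1,P_2$. The $81$ planes obtained are pairwise distinct: the $27$ through a fixed $P_k$ are distinct because the lines of $S_k$ are, and a cone-plane at $P_1$ or $P_2$ cannot contain $P_0$ because $P_0\notin T_{P_1}X$ — the line $\overline{P_0P_1}$ meets $X$ in the three distinct points $P_0,P_1,P_2$ and so is not tangent to $X$ at $P_1$. To obtain ``exactly $81$ planes that generate $A(X)$'', I would identify $A(X)$ with the explicit lattice of \autoref{algebraic_lattices} (Gram matrix in \autoref{appendix_gram_matrices}), fix $\eta_X$, and check with \cite{OSCAR} that $A(X)$ contains precisely $81$ vectors $v$ with $v^2=3$ and $v\cdot\eta_X=1$ and that these span $A(X)$. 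Since the class of every plane is such a vector, and two distinct planes of a general $X\in F_3^3$ cannot share a class — a general such $X$ contains no $\mathbb{P}^3$ cutting out three planes, so two distinct planes meet in at most a point and have intersection number $0$ or $1$ — the $81$ cone-planes are exactly the planes of $X$ and they generate $A(X)$.

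For the last assertion, fix a cone-plane $\Pi$ at $P_0$ and a cone-plane $\Pi'$ at $P_1$. Then $\Pi\subset H_0:=T_{P_0}X$ and $\Pi'\subset H_1:=T_{P_1}X$, so $\Pi\cap\Pi'\subset H_0\cap H_1\cong\mathbb{P}^3$; since $P_0\notin H_1$ and $P_1\notin H_0$ (the line $\overline{P_0P_1}$ being a trisecant), neither plane lies in $H_0\cap H_1$, so $\Pi\cap H_1$ and $\Pi'\cap H_0$ are two lines inside that $\mathbb{P}^3$, which are disjoint for general $X$; hence $\Pi\cap\Pi'=\emptyset$. (Equivalently one verifies $[\Pi]\cdot[\Pi']=0$ on the lattice of \autoref{appendix_gram_matrices}.) A cubic fourfold containing two disjoint planes is rational by the classical projection: sending a general point $x\in X$ to the pair of points cut on $\Pi$ and $\Pi'$ by the unique line through $x$ meeting both planes defines a birational map from $X$ to $\Pi\times\Pi'\cong\mathbb{P}^2\times\mathbb{P}^2$; equivalently $X\in\mathcal{C}_{14}$, which is rational by \cite{bolognesi2019some,RS_trisecant}.

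The geometry here is short once the trisecant line $\{x_0=\dots=x_3=0\}$ is spotted; the main obstacle is the lattice-theoretic bookkeeping behind the words ``exactly'' and ``generate'' — matching the $81$ cone-planes with the square-$3$ vectors of the abstract lattice $A(X)$ and ruling out any others with \cite{OSCAR} — together with making the several ``general $X$'' statements (smoothness of $S_k$, absence of a $\mathbb{P}^3$ cutting out three planes, disjointness of the two auxiliary lines in $\mathbb{P}^3$) rigorous, which in practice amounts to verifying them on one explicit smooth member of $F_3^3$.
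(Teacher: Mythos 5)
Your proposal is correct and follows essentially the same route as the paper: locate the Eckardt involution $x_4\leftrightarrow x_5$ and the three Eckardt points in the $\phi_3^3$-orbit on the line $\{x_0=\dots=x_3=0\}$, cone the $27$ lines of each vertex cubic surface to get the $81$ planes, match them with the $81$ classes of square $3$ and degree $1$ in the explicit Gram matrix, and produce two disjoint planes from the two tangent cones. The only point to tighten is the disjointness step: the lines $\Pi\cap H_1$ and $\Pi'\cap H_0$ both lie on the cubic surface $S_{01}=X\cap H_0\cap H_1$ and need not be disjoint for an arbitrary choice of $\Pi,\Pi'$ (no matter how general $X$ is), so one should, as the paper does, first pick two skew lines among the $27$ on $S_{01}$ and take the cones over them with vertices $P_0$ and $P_1$ respectively.
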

\begin{proof}
Recall from \autoref{class_cubiche} that the equation for a general such $X$ is given as:
\[L_3(x_0,\dots,x_3)+x_4^3+x_5^3+x_4x_5M_1(x_0,\dots,x_3)=0.\]
One can see that $X$ admits an involution that exchanges the two variables \(x_4\) and \(x_5\). After the change of basis given by \(y_4=x_4+x_5\) and \(y_5=x_4-x_5\), the involution is realized as \(y_5\mapsto -y_5\) and is an Eckardt involution. In the original coordinates, the involution has an Eckardt point \(P_1=[0:\dots:1:-1]\); we see two other Eckardt points in the orbit of $P_1$ under $\phi_3^3$. More explicitly, they are given by coordinates \(\phi_3^3(P_1)=P_2=[0:\dots:1:-\xi_3]\) and \(\phi_3^3(P_2)=P_3=[0:\dots:1:-\xi_3^2]\). 
   
Each Eckardt point \(P_i\) determines a cone \(X\cap T_{P_i}X=C_i\subset X\) over a cubic surface \(S_i\) for \(i=1,2,3\), moreover one can see that \(P_i\not\in T_{P_j}X\) for \(i\not=j\). This shows the existence of the \(81\) planes in \(X\), each generated by an Eckardt point \(P_i\), \(i=1,2,3\) and one of the \(27\) lines on the cubic surface $S_i$. For \(i\not= j\), we have that \(T_{P_i}X\cap T_{P_j}X\cong \mathbb{P}^3\), this determines a cubic surface \(S_{ij}=X\cap T_{P_i}X\cap T_{P_j}X\) contained in \(C_i\cap C_j\) and not containing the points \(P_i, P_j\).
As a consequence, there are at least two disjoint lines in the cubic surface \(S_{ij}\) that span two disjoint planes, one plane in the cone with vertex \(P_i\) and one in a cone with vertex another Eckardt point \(P_j\), and hence \(X\) is rational (in general, a cubic fourfold with at least two Eckardt points is rational, see \cite[Corollary 6.4.2]{gammelgaard2018cubic}). It is easily checked using the matrix description of \autoref{algebraic_lattices} that there are exactly \(81\) classes \(\alpha\) in \(A(X)\) such that \(\alpha^2=3\) and \(\alpha\cdot \eta_X=1\), moreover these generate the entire lattice (this can be checked using a computer).
\end{proof}
A general cubic fourfolds \(X\) in the family \(F_3^4\) is studied in \cite{koike2022cubic}, where it is proved that it contains \(243\) planes. 
The group \(\Aut_S(X)\) was described in \cite{Laza.Zheng:aut} and explicit generators of \(\Aut(X)\) are given in \cite{koike2022cubic}.

\begin{proposition}\label{eckardt_points_in_phi_3^4}
    A general cubic fourfold \(X\) in the family \(F_3^4\) belongs to the families \(F_2^1, F_2^2, F_3^3\) and \(F_3^6\). It contains eighteen Eckardt points and it contains exactly \(243\) planes that generate the lattice \(A(X)\). Moreover, \(X\) contains disjoint planes and it is rational. 
\end{proposition}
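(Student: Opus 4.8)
The plan is to follow the strategy of the preceding \autoref{eckardt_points_in_phi_3^3}, exploiting the very explicit equation of $X$ provided by \autoref{class_cubiche}: a general $X\in F_3^4$ has equation $F=L_3(x_0,x_1,x_2)+M_3(x_3,x_4,x_5)$ with $L_3,M_3$ general, so that $C_L=V(L_3)$ and $C_M=V(M_3)\subset\mathbb{P}^2$ are general smooth plane cubics and $X$ is smooth. First I would put $L_3$ and $M_3$ simultaneously into Hesse normal form: a general plane cubic is projectively equivalent to a member $u^3+v^3+w^3+\lambda uvw$ of the Hesse pencil, and this can be arranged on the two disjoint triples of variables independently, so we may assume $F=x_0^3+x_1^3+x_2^3+\mu x_0x_1x_2+x_3^3+x_4^3+x_5^3+\nu x_3x_4x_5$ with $\mu,\nu$ general. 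One then checks directly that $F$ is preserved by: the diagonal automorphism with action $(0,0,0,0,1,2)$ (of type $\phi_3^3$); the diagonal automorphism with action $(0,1,2,0,1,2)$, which up to reordering of coordinates is $(0,0,1,1,2,2)$ (of type $\phi_3^6$); the transposition $x_4\leftrightarrow x_5$, whose $(-1)$-eigenpoint $[0:0:0:0:1:-1]$ lies on $X$, so it is an Eckardt involution (type $\phi_2^1$); and the involution exchanging $x_1\leftrightarrow x_2$ and $x_4\leftrightarrow x_5$, of action $(0,0,0,0,1,1)$ (the symplectic involution $\phi_2^2$). As the actions distinguish all of these from $\phi_3^4$ itself, we conclude $X\in F_2^1\cap F_2^2\cap F_3^3\cap F_3^6$.

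Next I would exhibit eighteen Eckardt points and then show there are no more. If $P$ is a flex of $C_L$ with flex tangent line $T$, then $L_3|_T=c\,\ell^3$ for a linear form $\ell$ vanishing at $P$; setting $\tilde P=[P:0:0:0]\in X$, the hyperplane $T_{\tilde P}X\subset\mathbb{P}^5$ restricts to $T$ on the first three coordinates, so $F|_{T_{\tilde P}X}=c\,\ell^3+M_3(x_3,x_4,x_5)$, a cone with vertex $\tilde P$ over the cubic surface $\{c\,\ell^3+M_3=0\}$; hence $\tilde P$ is an Eckardt point. Symmetrically, each of the nine flexes of $C_M$ gives an Eckardt point $[0:0:0:Q]$, and as a plane cubic has exactly nine flexes this produces $9+9=18$ distinct Eckardt points. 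For the reverse bound one uses that Eckardt points correspond bijectively to Eckardt involutions in $\mathrm{Aut}(X)$, that for a general $X\in F_3^4$ one has $\mathrm{Aut}(X)=\mathrm{Aut}(C_L)\times\mathrm{Aut}(C_M)$ acting block-diagonally (see \cite{Laza.Zheng:aut,koike2022cubic}), and that each factor $\mathrm{Aut}(C_L)\cong\mathrm{Aut}(C_M)\cong(\mathbb{Z}/3)^2\rtimes\mathbb{Z}/2$ has exactly nine involutions, all conjugate to the negation with respect to a flex, hence reflections with eigenvalues $(1,1,-1)$ on the relevant $\mathbb{P}^2$. A block-diagonal involution $(g,h)$ then has a single $(-1)$-eigenvalue on $\mathbb{P}^5$ (up to scalar) exactly when one of $g,h$ is such an involution and the other is the identity, giving precisely $9+9=18$ Eckardt involutions.

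For the planes, I would invoke that $X$ contains exactly $243$ planes generating $A(X)$ by \cite{koike2022cubic}; alternatively, as in \autoref{eckardt_points_in_phi_3^3}, this can be checked on the Gram matrix of \autoref{appendix_gram_matrices} by verifying that there are exactly $243$ classes $\alpha\in A(X)$ with $\alpha^2=3$, $\alpha\cdot\eta_X=1$, that they generate $A(X)$, and that each is the class of a plane spanned by an Eckardt point and a line of the cubic surface cut on $X$ by its tangent hyperplane. Finally, choosing two distinct Eckardt points $\tilde P_i\neq\tilde P_j$ of flex type, one has $\tilde P_i\notin T_{\tilde P_j}X$, so $T_{\tilde P_i}X\cap T_{\tilde P_j}X\cong\mathbb{P}^3$ and $S_{ij}=X\cap T_{\tilde P_i}X\cap T_{\tilde P_j}X$ is a cubic surface not meeting $\tilde P_i$ or $\tilde P_j$; any two disjoint lines $\ell,\ell'\subset S_{ij}$ span, inside the cones $X\cap T_{\tilde P_i}X$ and $X\cap T_{\tilde P_j}X$ respectively, two disjoint planes of $X$, exactly as in \autoref{eckardt_points_in_phi_3^3}. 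In particular $X\in\mathcal{C}_{14}$ and $X$ is rational.

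The main obstacle is the upper bound ``at most eighteen'' Eckardt points: it genuinely uses the full automorphism group $\mathrm{Aut}(X)$, in particular the fact that no automorphism of $X$ mixes the two skew planes $\{x_3=x_4=x_5=0\}$ and $\{x_0=x_1=x_2=0\}$. All the membership assertions and the lower bound of eighteen Eckardt points are elementary once the Hesse normalization is in place, and the construction of disjoint planes is identical to the one already carried out in \autoref{eckardt_points_in_phi_3^3}.
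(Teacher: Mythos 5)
Your proposal is correct and follows essentially the same route as the paper: Hesse normal form on the two blocks of variables, identification of the extra automorphisms giving membership in $F_2^1,F_2^2,F_3^3,F_3^6$, Eckardt points at the flexes (which are exactly the $\Aut(L_3)$- and $\Aut(M_3)$-orbits of $[1:-1:0:\cdots:0]$ and $[0:\cdots:1:-1:0]$ used in the paper), the $243$ classes of square $3$ and degree $1$ in the Gram matrix, and disjoint planes from $S_{ij}=X\cap T_{P_i}X\cap T_{P_j}X$. Two small deltas: you supply an upper bound of eighteen Eckardt points by counting Eckardt involutions in $\Aut(X)$, which the paper does not spell out; conversely, the paper explicitly resolves the apparent double count ($18\times 27=486$ cones over lines) by showing that the cones over the two orbits of Eckardt points cut out the same $243$ planes (via the normal form $y_0y_1^2+y_2^3=y_3y_4^2+y_5^3$), whereas you delegate the exact count to the lattice computation and to \cite{koike2022cubic} — which is legitimate but worth making explicit.
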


\begin{proof}
    Recall that from \autoref{class_cubiche} the equation of such an $X$ is given by $L_3(x_0,x_1,x_2)+M_3(x_3, x_4, x_5)=0$. The cubic curves determined by \(L_3\) and \(M_3\) can be put in the Hesse normal form: there exist \(\lambda,\mu\in\mathbb{C}\) with \(\lambda^3\not=1\) and \(\mu^3\not=1\) such that the equation of \(X\) is of the form 
\[ x_0^3+x_1^3+x_2^3 -\lambda x_0x_1x_2=x_3^3+x_4^3+x_5^3 -\mu x_3x_4x_5.\] 
One can see that \[\Aut(X)\cong \Aut(L_3)\times\Aut(M_3)\times \langle \phi_3^4\rangle\] 
where \(\Aut(L_3)\) and \(\Aut(M_3)\) are of order \(18\) generated by the permutations of variables and \[L=\begin{pmatrix}
    1 & 0 & 0\\
    0 & \xi_3 &0\\
    0 & 0 & \xi_3^2
\end{pmatrix}.\]
The group of symplectic automorphisms \(\Aut_S(X)\) is the subgroup of index \(2\) given by automorphisms of determinant \(1\) (only even permutations).
From this description, we see that the cubic belongs to the families \(F_2^1, F_2^2, F_3^3\) and \(F_3^6\). 

With the variables of the Hesse normal form, we see that any of the \(9\) points in the \(\Aut(L_3)\)-orbit of \(P=[1:-1:0:\dots:0]\) is an Eckardt point.
More precisely, any point \(P_i\) in the orbit is the vertex of the cone \(C_i=X\cap T_{P_i} X\) over a cubic surface \(S_i\) for \(i \in \{1, \cdots, 9\}\) and one can directly see from the equations that for a general choice of \(\lambda\) we have \(P_j\not\in T_{P_i} X\) whenever \(i\not=j\). Thus, any choice of a line \(l\) in \(S_i\) gives a distinct plane as a cone over \(l\) with vertex the point \(P_i\).

Any of the nine Eckardt points \(P_i\), \(i \in \{1, \cdots, 9\}\) determines \(27\) planes in \(X\), then we find \(243\) planes in \(X\). From the matrix description of the lattice \(A(X)\) in \autoref{algebraic_lattices} one can see that there are exactly \(243\) classes \(\alpha\) such that \(\alpha^2=3\) and \(\alpha\cdot \eta_X=1\). 
Each such class is represented by a unique plane, which generate the entire lattice.
We have that \(T_{P_i}X \cap T_{P_j}X \cong \mathbb{P}^3\) for \(i\not=j\), hence \(S_{ij}=X\cap T_{P_i}X \cap T_{P_j}X\) is a cubic surface contained in the intersection of the cones \(C_i\cap C_j\). Taking the cones over two disjoint lines \(l_i, l_j\in S_{ij}\) with vertices \(P_i\) and \(P_j\) respectively, we find disjoint planes in \(X\) and then \(X\) is rational.

There are \(9\) other Eckardt points in \(X\) corresponding to the \(\Aut(M_3)\)-orbit of the point \(Q=[0:\dots:1:-1:0]\), but they determine the same planes as the previous Eckardt points. In fact, let \(C\) be the cone over a cubic surface associated to a point in the orbit of \(P\) and \(C'\) the cone associated to a point in the orbit of \(Q\). The intersection \(C\cap C'\) consist of \(3\) disinct planes - indeed, after a suitable change of coordinated the equation of \(X\) can be re-written as \[y_0y_1^2+y_2^3= y_3 y_4^2+y_5^3 .\] The cones are given by \(C=X\cap \{y_0=0\}\), \(C'=X\cap \{y_3=0\}\) and the intersection is \(C\cap C'=\{y_2^3=y_5^3\}\subset \mathbb{P}^3\). From the fact that there are \(9\) points in each orbit, it follows that the \(27\) planes associated to an Eckardt point are contained in the \(9\) cones corresponding to Eckardt points in the other orbit.
\end{proof}

\section{G-rationality}\label{G_rationality_section}
One of the guiding questions in the theory of cubic fourfolds is to determine if a cubic fourfold is rational. In the presence of a group action, it is natural to ask whether rationality holds in the equivariant context. This has been recently explored in \cite{Bohning.Bothmer.Tschinkel:Equivariant.birational.geometry}, who provided a counter example to the equivariant analogues of existing rationality conjectures. 
In particular, the authors provide an example of a rational cubic fourfold (a Pfaffian cubic) with a large group action that is not $G$-rational. 
In this section we provide two more examples or rational cubic fourfolds, which are not $G$-rational for $G=\Aut(X)$.
\begin{definition}
Let \(G\) be a group, then a \(G\)\textit{-variety} is a variety \(X\) with an inclusion \(G\subseteq \Aut(X)\). Two \(G\)-varieties are said to be \(G\)\textit{-birational} if there is a \(G\)-equivariant birational morphisms between them. A \(G\)-variety is said to be \(G\)\textit{-rational} if it is \(G\)-birational to \(\mathbb{P}^n\).
\end{definition}

Clearly, if a group \(G\) cannot be realized as a group of linear transformation of \(\mathbb{P}^n\) then no variety of dimension \(n\) can be \(G\)-rational. We now introduce the invariant that will allow us to determine that a cubic fourfold is not \(G\)-rational, following for example \cite{tschinkel2023equivariantbirationalgeometrylinear}. The same definitions could be given for an arbitrary ground field, but for simplicity we stick to the case of complex numbers.

Let \(G\) be a group and \(n\in\mathbb{N}\), denote by \(\Symb_n(G)\) the free abelian group generated by symbols \((H,Y\actsfromleft K,\beta)\)
where 
\begin{itemize}
    \item \(H\subseteq G\) is an abelian subgroup,
    \item \(Y\subseteq Z_G(H)/H\) is a subgroup, where \(Z_G(H)\) denotes the centralizer of \(H\) in \(G\),
    \item \(K\) is a finitely generated field extension of \(\mathbb{C}\) of transcendence degree \(d\subseteq n\) with a faithful action of \(Y\),
    \item \(\beta=(\beta_1,\dots,\beta_{n-d})\) is a sequence of nontrivial characters of \(H\) generating \(H^\vee\).
\end{itemize}

The \textit{equivariant Burnside group} \(\Burn_n(G)\) is obtained as a quotient of \(\Symb_n(G)\) by certain (slightly technical) equivalence relations, which we omit but are precisely described in \cite[Section 3.3]{tschinkel2023equivariantbirationalgeometrylinear}. 
The relation which will be relevant for us,  is the blow-up relation:
\((H,Y\actsfromleft K,\beta)=\Theta_1+\Theta_2\), where 
\begin{align*}
    \Theta_1&:=\begin{centercases}
  0      & \text{if } b_1=b_2 \\
  (H,Y\actsfromleft K,\beta_1)+(H,Y\actsfromleft K,\beta_2) & \text{if }b_1\not=b_2
       \end{centercases}\\ 
    \Theta_2&:=\begin{centercases}
            0      & \text{if }b_i\in\langle b_1-b_2\rangle \text{ for some i}, \\
          (\overline{H},\overline{Y}\actsfromleft K(t),\overline{\beta}) & \text{otherwise.}
       \end{centercases}
\end{align*}
Here, \(\overline{H}=\Ker(b_1-b_2)\subset H\), \(\beta\) is the image of characters in \(\overline{H}^\vee\), \(\beta_1=(b_1-b_2,b_2,b_3,\dots,b_n)\) and \(\beta_2=(b_1,b_2-b_1,b_3,\dots,b_n)\). We remark that this is the only relation that involves extensions with different transcendence degrees.

For \(X\) a \(G\)-variety of dimension \(n\), following  \cite{Kresch2020EquivariantBT,tschinkel2023equivariantbirationalgeometrylinear}, we consider the symbol:
 \[[X\righttoleftarrow G]:=\sum_H\sum_F(H, Y \actsfromleft k(F),\beta_F)\in\Burn_n(G).\]
Here the sum is over the conjugacy classes of stabilizers \(H\) of maximal strata \(F\subset X\) with these stabilizers, induced action of a subgroup \(Y\subseteq Z_G(H)/H\) on the field of functions \(\mathbb{C}(F)\) and corresponding weights \(\beta_F\) of \(H\) in the normal bundle of \(F\). The symbol $[X\righttoleftarrow G]$ is well-defined and is a \(G\)-birational invariant.

\begin{theorem}\label{G_rationality}
    Let \(X\) be a cubic fourfold with automorphism \(\phi_3^3\) or \(\phi_3^4\) and let \(G=\Aut(X)\). Then \(X\) is rational but not \(G\)-rational.
\end{theorem}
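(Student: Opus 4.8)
The plan is twofold: rationality is immediate from the earlier propositions, and the failure of $G$-rationality is obtained by exhibiting a single divisorial symbol in an equivariant Burnside group that no linear action on $\mathbb{P}^4$ can reproduce.

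First, by \autoref{eckardt_points_in_phi_3^3} and \autoref{eckardt_points_in_phi_3^4}, a general cubic fourfold in $F_3^3$ or $F_3^4$ contains two disjoint planes, hence is rational. For the non-$G$-rationality, the key point is that both families lie in the Eckardt family: a general $X$ in either family carries an Eckardt point $P$, and hence an Eckardt involution $\iota\in G=\Aut(X)$ which, after a change of coordinates, is $[x_0:\dots:x_4:x_5]\mapsto[x_0:\dots:x_4:-x_5]$ with $P=[0:\dots:0:1]$, so that $F$ is put in the normal form $F=x_5^2g_1(x_0,\dots,x_4)+g_3(x_0,\dots,x_4)$. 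Since a $G$-equivariant birational map $X\dashrightarrow\mathbb{P}^4$ restricts to a $\langle\iota\rangle$-equivariant one, it suffices to prove that $X$ is not $\langle\iota\rangle$-rational, i.e. that $[X\righttoleftarrow\langle\iota\rangle]\neq[\mathbb{P}^4\righttoleftarrow\langle\iota\rangle]$ in $\Burn_4(\mathbb{Z}/2)$ for every faithful linear involution on $\mathbb{P}^4$.

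The class $[X\righttoleftarrow\langle\iota\rangle]$ is straightforward to compute, since $\langle\iota\rangle$ is abelian and no blow-ups are needed to separate strata: the fixed locus of $\iota$ on $X$ is $W\sqcup\{P\}$, where $W=X\cap\{x_5=0\}=\{g_3=0\}$ is a cubic threefold (smooth for general $X$) and $P$ is the isolated fixed point. Thus $W$ contributes the \emph{divisorial} symbol $\sigma_W=(\langle\iota\rangle,\{1\}\actsfromleft\mathbb{C}(W),\chi)$ with $\chi$ the nontrivial character of $\mathbb{Z}/2$ (the transversal weight of $\iota$ along $W$), with coefficient one, while $\{P\}$ contributes a symbol with $\mathbb{C}$-valued field component. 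On the other hand, for a faithful linear involution $\mathbb{P}^4=\mathbb{P}(V_+\oplus V_-)$ the fixed locus is $\mathbb{P}(V_+)\sqcup\mathbb{P}(V_-)$, so the only divisorial symbol that can occur in $[\mathbb{P}^4\righttoleftarrow\langle\iota\rangle]$ is $(\langle\iota\rangle,\{1\}\actsfromleft\mathbb{C}(\mathbb{P}^3),\chi)$. Now $\sigma_W$ has a weight sequence of length one; examining the relations defining $\Burn_4$, the only one that could involve it is the blow-up relation with a left-hand side whose weight sequence has length $\geq 2$, but for the cyclic group $\mathbb{Z}/2$ the term $\Theta_2$ there always has \emph{trivial} stabilizer, so $\sigma_W$ occurs in no relation: divisorial symbols with prime-order cyclic stabilizer are free generators of $\Burn_4(\mathbb{Z}/2)$. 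Hence the equality $[X\righttoleftarrow\langle\iota\rangle]=[\mathbb{P}^4\righttoleftarrow\langle\iota\rangle]$ would force $\mathbb{C}(W)\cong\mathbb{C}(\mathbb{P}^3)$, i.e. rationality of the cubic threefold $W$, contradicting the Clemens--Griffiths theorem. Therefore $X$ is not $G$-rational.

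I expect two points to need care. First, one must check that $W$ is smooth for general $X$ in each family — a short computation with the equations of \autoref{class_cubiche} brought into the Eckardt normal form — so that $\mathbb{C}(W)$ is genuinely the function field of a general (hence irrational) cubic threefold. Second, the bookkeeping with $\Burn_4(\mathbb{Z}/2)$ should be handled cleanly; the safest route is to pass to the quotient of $\Burn_4$ recording only the divisorial symbols with a fixed cyclic stabilizer and trivial residual action, where the argument reduces to the bare non-rationality of $W$. Finally, it is worth recording that for $\phi_3^4$ one can avoid the Burnside machinery altogether: here $G\cong\Aut(L_3)\times\Aut(M_3)\times\langle\phi_3^4\rangle$ with each $\Aut(L_i)\cong(\mathbb{Z}/3)^2\rtimes\mathbb{Z}/2$, and $G$ admits no faithful $5$-dimensional projective representation — a subgroup $(\mathbb{Z}/3)^4$ can sit inside $PGL_5(\mathbb{C})$ only as the $3$-torsion of the diagonal torus, and then the two commuting involutions inverting the two $(\mathbb{Z}/3)^2$-factors would have product $-\mathrm{id}$ on $\mathbb{F}_3^4$, which is not realized by the Weyl group $S_5$ — so $G\not\subseteq PGL(5)$ and $X$ is trivially not $G$-rational.
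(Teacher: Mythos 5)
Your overall strategy is the same as the paper's: rationality comes from the disjoint planes of \autoref{eckardt_points_in_phi_3^3} and \autoref{eckardt_points_in_phi_3^4}, and the obstruction to $G$-rationality is the Burnside symbol attached to the Eckardt involution, whose divisorial stratum is an irrational cubic threefold while every stratum of a linear action on $\mathbb{P}^4$ is rational (and the paper, like you, invokes that the blow-up relation cannot convert rational strata into irrational ones). Your reduction to the subgroup $\langle\iota\rangle$ and the identification of the fixed locus $W\sqcup\{P\}$ are fine.

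Two of your supporting claims are wrong, however. First, for $H=\mathbb{Z}/2$ the term $\Theta_2$ does \emph{not} have trivial stabilizer: both weights are forced to equal the unique nontrivial character $\chi$, so $b_1-b_2=0$ and $\overline{H}=\Ker(b_1-b_2)=H$. Hence divisorial symbols with stabilizer $\mathbb{Z}/2$ are not automatically free generators of $\Burn_4(\mathbb{Z}/2)$; symbols over fields of the form $K(t)$ do arise from blow-up relations. The argument is rescued exactly by the mechanism you call the ``safest route'' (and which is the paper's actual argument): the relations only ever produce strata of the form $K(t)$, so they preserve rationality of the field component, and $\mathbb{C}(W)$ with $W$ an irrational cubic threefold can never be reached from the rational strata of $[\mathbb{P}^4\righttoleftarrow G]$. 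You should make that the argument rather than the incorrect freeness claim. Second, the proposed shortcut for $\phi_3^4$ fails: inside $PGL_6$ the diagonal scaling $L$ and the $3$-cycle generating $\Aut(L_3)$ do not commute (their commutator is $\phi_3^4$ up to a power), so the Sylow $3$-subgroup of $\Aut(X)$ is a central product of two Heisenberg groups of order $27$ over $\langle\phi_3^4\rangle$, i.e.\ an extraspecial group of order $3^5$, whose maximal elementary abelian subgroups have rank $3$. There is no $(\mathbb{Z}/3)^4$ in $G$, so the representation-theoretic obstruction you sketch does not apply; this bonus argument should be dropped.
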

\begin{proof}
Suppose that \(X\) is \(G\)-rational, i.e. there is a \(G\)-equivariant birational morphism between \(X\) and \(\mathbb{P}^4\). Then by \cite[Theorem 5.15]{Kresch2020EquivariantBT} we have equality \[[X\righttoleftarrow G]=[\mathbb{P}^4\righttoleftarrow G]\in\Burn_4(G),\]
and by \cite[Corollary 6.1]{tschinkel2023equivariantbirationalgeometrylinear} for every summand in \([\mathbb{P}^4\righttoleftarrow G]\) the stratum \(F\) is birational to a product \(\Pi_j \mathbb{P}(W_j)\) of linear space where the induced action on each factor is birational to a projective linear action.

In any case, there is an involution of \(X\) point-wise fixing an Eckardt point and smooth cubic threefold \(F\subset X\) which gives a symbol \((C_2,Y\actsfromleft k(F),(-1))\in\Burn_4(G)\) appearing as a summand in \([X\righttoleftarrow G]\), we argue that this gives a contradiction to the equality \([X\righttoleftarrow G]=[\mathbb{P}^4\righttoleftarrow G]\). In fact, the cubic threefold \(F\) is irrational, and the associated symbol is different from any symbol appearing in \([\mathbb{P}^4\righttoleftarrow G]\). Moreover, the symbol associated to the cubic threefold is not subject to the blow-up relation with any of the symbols appearing in \([\mathbb{P}^4\righttoleftarrow G]\) because the associated strata are rational, and the blow-up relation preserves rationality.
\end{proof}

\begin{remark}
    One can use the same techniques of \autoref{G_rationality} to show that the general cubic fourfold $X$ admitting an Eckardt involution is not $G$-rational for $G=\mathbb{Z}/2\mathbb{Z}$, as suggested by Tschinkel. However, such a cubic is conjecturely irrational (see \cite{marquand2023cubic} for a discussion).
\end{remark}

\section{Birational transformations of LSV manifolds}\label{LSV_birationalities}
In this section we consider a cubic fourfold \(X\) with a symplectic automorphism of prime order and study the cohomological action of the induced birational transformation on associated LSV manifolds \(J(X)\) and associated twisted LSV manifolds \(J^t(X)\). The classification of groups of birational transformations of an IHS of OG10 type that can be induced from automorphisms of a cubic fourfold was completed in \cite[Theorem 6.1]{marquand2024finitegroupssymplecticbirational} (see also \cite{marquand2023classificationsymplecticbirationalinvolutions} for the case of involutions). Here we are able to identify the Neron-Severi and transcendental lattices of \(J(X)\) and \(J^t(X)\), complimenting the above results.

Recall that an \textit{irreducible holomorphic symplectic (IHS) manifold} is a compact Kähler manifold which is simply connected and whose space of holomorphic two forms is generated by a non-degenerate form (see \cite{Beauville:varietes.kahleriennes.classe.chern.nulle}). One can construct smooth IHS manifolds from a cubic fourfold - we briefly recall the construction of \cite{LSV,Voisin:compactification.twisted.2016hyper,Sacca2020birational}.

Let \(X\subset \mathbb{P}^5\) be a smooth cubic fourfold, and consider the intermediate Jacobian fibration \(\pi_U:J_U(X)\to U\subset (\IP^5)^\vee\) over the locus $U$ of smooth hyperplane sections, whose fibers are intermediate Jacobians.
Similarly, there is a fibration 
\(\pi_U^t:J^t_U(X)\to U\) whose fibers are twisted intermediate Jacobians (parametrizing degree \(1\) cycles). We recall the following result:

\begin{theorem}[{\cite{LSV,Voisin:compactification.twisted.2016hyper,Sacca2020birational}}]  
Let \(X\) be a smooth cubic fourfold, then there exists  compactifications \(J(X)\to (\mathbb{P}^5)^\vee\) and \(J^t(X)\to (\mathbb{P}^5)^\vee\) such that \(J(X)\) and \(J^t(X)\) are irreducible symplectic manifolds of OG10 type.
\end{theorem}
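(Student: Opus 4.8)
The plan is to build the compactifications from the open Lagrangian fibrations $\pi_U\colon J_U(X)\to U$ and $\pi^t_U\colon J^t_U(X)\to U$ already at hand, to verify that the holomorphic symplectic form extends, and finally to identify the deformation type. First I would recall that for $t\in U$ the hyperplane section $X\cap H_t$ is a smooth cubic threefold, so its intermediate Jacobian is a principally polarised abelian fivefold, and that by Donagi-Markman the total space $J_U(X)$ carries a holomorphic symplectic form for which $\pi_U$ is Lagrangian; the key computation identifies the cotangent space of $U$ at $t$ with a space of cubic forms on $H^0(X\cap H_t,\mathcal{O}(1))$ and writes the form down explicitly. Since $J^t_U(X)\to U$ is an étale-locally trivial torsor under $J_U(X)$, it is locally isomorphic to $J_U(X)$ over $U$ and inherits the same symplectic structure.

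Next I would compactify over the discriminant $D=(\IP^5)^\vee\setminus U$. Over the locus of hyperplane sections with a single node the fibre degenerates to a semiabelian variety, and the degeneration is governed by the compactified Jacobian (equivalently Prym) of an associated curve; away from the deeper strata one builds a smooth projective $J(X)\to(\IP^5)^\vee$ on which the symplectic form extends — across the codimension-one boundary by a Hartogs/boundedness argument, and automatically over strata of codimension at least two — so that $J(X)$ is an irreducible holomorphic symplectic manifold and $J(X)\to(\IP^5)^\vee$ a Lagrangian fibration. The substantial difficulty, and where I expect the real work to be, is to control the total space over the deepest strata of $D$, where the hyperplane sections acquire worse singularities: following Saccà one carries out a careful local analysis of the limiting fibres as (compactified) Prym varieties to show that $J(X)$, and likewise the twisted $J^t(X)$ produced by Voisin, is smooth with the symplectic form remaining non-degenerate everywhere.

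Finally I would pin down the deformation type. One computes $b_2(J(X))=24$ and shows that the Beauville-Bogomolov-Fujiki lattice is $\bU^{\oplus 3}\oplus\bE_8(-1)^{\oplus 2}\oplus\bA_2(-1)$ with the Fujiki constant of O'Grady's tenfold; this is incompatible with the $K3^{[n]}$, generalised Kummer and OG6 deformation classes, leaving OG10 as the only known possibility. To confirm it is genuinely of OG10 type I would deform $X$ within the connected moduli of cubic fourfolds to a special cubic whose intermediate Jacobian fibration is birational to a known manifold of OG10 type — for instance a moduli space of sheaves on a K3 surface admitting O'Grady's symplectic desingularisation — and then invoke constancy of the deformation type in the resulting smooth family $\{J(X_s)\}_s$ to transport the conclusion to the general $X$. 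The same deformation, together with a birational comparison of $J(X_s)$ and $J^t(X_s)$ along it, settles the twisted case.
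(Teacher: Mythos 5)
The paper does not prove this statement: it is quoted verbatim from \cite{LSV,Voisin:compactification.twisted.2016hyper,Sacca2020birational} and used as a black box, so there is no internal argument to compare yours against. Judged against those references, your outline is a faithful road map of the actual proofs --- Donagi--Markman for the symplectic form on $J_U(X)$, the relative compactified Prym construction over the one-nodal locus, Sacc\`a's extension (via birational geometry/MMP of the relative theta divisor) over the deeper strata for an arbitrary smooth $X$, and Voisin's torsor construction for the twisted family --- but it is a survey-level sketch whose every step is itself one of the cited theorems, not an independent proof.

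One step as written would not go through: computing $b_2=24$ and the Beauville--Bogomolov--Fujiki lattice $\bU^{\oplus 3}\oplus\bE_8(-1)^{\oplus 2}\oplus\bA_2(-1)$ only rules out the \emph{known} deformation classes and cannot by itself pin down the deformation type, since a priori there could be an unknown class with the same invariants. The identification in \cite{LSV} is done the other way around: one degenerates $X$ to the chordal cubic (the secant variety of the Veronese surface), identifies the limit of the intermediate Jacobian fibrations with a compactified Jacobian fibration birational to O'Grady's singular moduli space of sheaves on a degree two K3 surface, and then transports the OG10 type back along the resulting family. Your second mechanism (deforming to a special cubic and invoking constancy of the deformation type) is the correct one, so the first should be dropped rather than presented as an alternative argument. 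For the twisted case one also needs Voisin's comparison of $J^t(X)$ with $J(X)$ by a specialization to cubics where the two become birational, which you gesture at but which is a genuine additional input.
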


Such a IHS compactification \(J(X)\) is called a \textit{Laza-Saccà-Voisin (LSV)} manifold, and \(J^t(X)\) is called a \textit{twisted Laza-Saccà-Voisin (twisted LSV)}.

There are two algebraic classes which are always present, namely the pullback of the hyperplane class that we denote by \(L\) and \(L^t\) and the relative theta divisor that we denote by \(\Theta\) and \(\Theta^t\) respectively. These classes span a lattice \[\bU_X=\langle L, \Theta\rangle\subseteq \NS(J(X))\] that is isometric to the hyperbolic plane \(\bU\), and a lattice 
\[\bU^t_X=\langle L^t, \Theta^t\rangle\subseteq \NS(J(X))\] that is isometric to the rescaled hyperbolic plane \(\bU(3)\) (see \cite{Mongardi.Onorati:birational.geometry.OG10}). Moreover, there are isometries 
\[\bU_X^{\perp}\cong H^4_{prim}(X,\mathbb{Z})(-1)\cong (\bU_X^t)^{\perp},\]
where the orthogonal complements are taken in \(H^2(J(X),\mathbb{Z})\) and \(H^2(J^t(X),\mathbb{Z})\), respectively.

We firstly establish the relation between the coinvariant lattice for \(X\) and the coinvariant lattices for \(J(X)\) and \(J^t(X)\), this allows us to also deduce a relation between the algebraic lattices when \(X\) is general.
\begin{proposition}\label{inv_to_inv}
Let \(X\) be a cubic fourfold which is general among the ones admitting a symplectic automorphism \(\phi\in\Aut(X)\) of finite order and let \(\widetilde{\phi}\in\Bir(J(X))\) and \(\widetilde{\phi}^t\in\Bir(J^t(X))\) be the induced birational transformations on the manifolds \(J(X)\) and \(J^t(X)\).

Then we have:
\begin{itemize}
    \item an isometry
\(H_{prim}^4(X,\mathbb{Z})_\phi(-1)\cong  H^2(J(X),\mathbb{Z})_{\widetilde{\phi}}\),
\item an isometry
    \(H_{prim}^4(X,\mathbb{Z})_\phi(-1)\cong  H^2(J^t(X),\mathbb{Z})_{\widetilde{\phi}^t},\)
\item 
an equality \(
      \bU_X\oplus  H^2(J(X),\mathbb{Z})_{\widetilde{\phi}}=\NS(J(X)) \), \item  an embedding \(
  \bU_X^t\oplus H^2(J^t(X),\mathbb{Z})_{\widetilde{\phi}^t}\subseteq \NS(J^t(X))
\),
\end{itemize}
where the last embedding is of index \(3\) if \(X\in \mathcal{C}_d\) for some \(d\equiv 2(6)\), and it is primitive (and hence an equality) otherwise. 
\end{proposition}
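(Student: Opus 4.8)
The plan is to determine the action of $\widetilde{\phi}$ and $\widetilde{\phi}^t$ on $H^2(J(X),\mathbb{Z})$ and $H^2(J^t(X),\mathbb{Z})$ relative to the marked sublattices $\bU_X$ and $\bU_X^t$, deduce from this the statements on (co)invariant lattices, and then read off the Néron--Severi lattices, the last point reducing to \autoref{lem:d_cong_2_mod_6}. First I would recall that the classes $L,\Theta$ (resp. $L^t,\Theta^t$) are canonically attached to the (twisted) LSV fibration, hence are fixed by any birational transformation induced by an automorphism of $X$; thus $\widetilde{\phi}$ acts trivially on $\bU_X$ and $\widetilde{\phi}^t$ acts trivially on $\bU_X^t$. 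By functoriality of the LSV and twisted LSV constructions, under the isometries $\bU_X^{\perp}\cong H^4_{prim}(X,\mathbb{Z})(-1)\cong(\bU_X^t)^{\perp}$ recalled above, the induced actions of $\widetilde{\phi}$ on $\bU_X^{\perp}$ and of $\widetilde{\phi}^t$ on $(\bU_X^t)^{\perp}$ both correspond to $\phi^{*}$ acting on $H^4_{prim}(X,\mathbb{Z})$.

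Since $\bU_X\cong\bU$ is unimodular, $H^2(J(X),\mathbb{Z})=\bU_X\oplus\bU_X^{\perp}$ is an orthogonal direct sum, whence $H^2(J(X),\mathbb{Z})^{\widetilde{\phi}}=\bU_X\oplus H^4_{prim}(X,\mathbb{Z})^{\phi}(-1)$ and $H^2(J(X),\mathbb{Z})_{\widetilde{\phi}}=H^4_{prim}(X,\mathbb{Z})_{\phi}(-1)$, which is the first isometry. For $J^t(X)$ a discriminant count shows that $\bU_X^t\oplus(\bU_X^t)^{\perp}\subseteq H^2(J^t(X),\mathbb{Z})$ has index $3$ (as $d(\bU(3))=9$ while $d(H^4_{prim}(X,\mathbb{Z}))=3$ and $d(H^2(J^t(X),\mathbb{Z}))=3$); since $\widetilde{\phi}^t$ preserves each summand, its invariant lattice rationally spans $\bU_X^t\oplus H^4_{prim}(X,\mathbb{Z})^{\phi}(-1)$, so the coinvariant lattice is orthogonal to that span and therefore equals $((\bU_X^t)^{\perp})_{\phi}=H^4_{prim}(X,\mathbb{Z})_{\phi}(-1)$, which is automatically primitive in $H^2(J^t(X),\mathbb{Z})$ because $(\bU_X^t)^{\perp}$ is. This gives the second isometry.

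Now I would invoke the generality hypothesis: $T(X)=H^4_{prim}(X,\mathbb{Z})^{\phi}$, hence $A_{prim}(X)=H^4_{prim}(X,\mathbb{Z})_{\phi}$, so $H^2(J(X),\mathbb{Z})_{\widetilde{\phi}}=A_{prim}(X)(-1)$ and likewise for $J^t(X)$. Under $\bU_X^{\perp}\cong H^4_{prim}(X,\mathbb{Z})(-1)$ the transcendental lattice of $J(X)$ is $T(X)(-1)\subseteq\bU_X^{\perp}$, so $\NS(J(X))$ is its orthogonal complement in $H^2(J(X),\mathbb{Z})$; as $\bU_X$ is unimodular this equals $\bU_X\oplus(T(X)(-1))^{\perp}$ with complement taken inside $\bU_X^{\perp}$, namely $\bU_X\oplus A_{prim}(X)(-1)=\bU_X\oplus H^2(J(X),\mathbb{Z})_{\widetilde{\phi}}$, which is the third assertion. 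For $J^t(X)$ the same argument gives $\NS(J^t(X))=(T(X)(-1))^{\perp}\subseteq H^2(J^t(X),\mathbb{Z})$, and intersecting with the index-$3$ sublattice $\bU_X^t\oplus(\bU_X^t)^{\perp}$ yields $\bU_X^t\oplus A_{prim}(X)(-1)=\bU_X^t\oplus H^2(J^t(X),\mathbb{Z})_{\widetilde{\phi}^t}$; hence this is a sublattice of $\NS(J^t(X))$ of index $1$ or $3$. The index is $3$ exactly when the nontrivial coset of $H^2(J^t(X),\mathbb{Z})/(\bU_X^t\oplus(\bU_X^t)^{\perp})$ contains a class orthogonal to $T(X)(-1)$; a generator of this cyclic quotient may be taken of the form $\tfrac{1}{3}(a+b)$ with $a\in\bU_X^t$ and $b\in(\bU_X^t)^{\perp}$, and since $a$ is always algebraic the condition depends only on $b$. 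Transporting this via $(\bU_X^t)^{\perp}\cong H^4_{prim}(X,\mathbb{Z})(-1)$, it becomes the condition that the nontrivial coset of $H^4(X,\mathbb{Z})/(\langle\eta_X\rangle\oplus H^4_{prim}(X,\mathbb{Z}))$ meets $A(X)$, i.e. that $\langle\eta_X\rangle\oplus A_{prim}(X)\subsetneq A(X)$. By \autoref{lem:d_cong_2_mod_6} this holds if and only if $X\in\mathcal{C}_d$ for some $d\equiv 2\,(6)$, giving the stated dichotomy.

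I expect this last step to be the main obstacle. One has to verify, in the spirit of the discriminant-form bookkeeping of \autoref{lem:Z3_discrim}, that the $\mathbb{Z}/3\mathbb{Z}$-subgroup of $D(A_{prim}(X))$ controlling the extension of $\bU_X^t\oplus A_{prim}(X)(-1)$ inside $H^2(J^t(X),\mathbb{Z})$ is the same one controlling the extension of $\langle\eta_X\rangle\oplus A_{prim}(X)$ inside $H^4(X,\mathbb{Z})$, and that the two a priori different choices of generator of $D(H^4_{prim}(X,\mathbb{Z}))\cong\mathbb{Z}/3\mathbb{Z}$ involved (glued to $D(\bU_X^t)$ and to $D(\langle\eta_X\rangle)$ respectively) do not affect the comparison; they do not, because the relevant image subgroup of $\mathbb{Z}/3\mathbb{Z}$ is either trivial or all of it, and so distinguishing the two nonzero generators is immaterial.
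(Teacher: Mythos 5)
There is a genuine gap at the very first step. You assert that ``by functoriality'' the isometry \(\bU_X^{\perp}\cong H^4_{prim}(X,\mathbb{Z})(-1)\) intertwines \(\widetilde{\phi}\) with \(\phi^*\), and everything else in your argument for \(J(X)\) (the first isometry, and the identification \(H^2(J(X),\mathbb{Z})_{\widetilde{\phi}}=A_{prim}(X)(-1)\) feeding into the third bullet) hangs on this. But for the untwisted LSV manifold this equivariant isometry is not available: what the construction provides is only an \emph{equivariant isogeny} of Hodge structures \(H^4_{prim}(X,\mathbb{Z})(-1)\to \bU_X^{\perp}\), so a priori one only gets a finite-index inclusion \(H^4_{prim}(X,\mathbb{Z})_{\phi}(-N)\subseteq H^2(J(X),\mathbb{Z})_{\widetilde{\phi}}\) for some \(N>0\), not an isometry of the coinvariant lattices. (The abstract isometry \(\bU_X^{\perp}\cong H^4_{prim}(X,\mathbb{Z})(-1)\) recalled in the preliminaries is not asserted to be compatible with the induced birational actions.) This is precisely where the paper has to work hardest: after using the isogeny plus generality to pin down the rank and the equality \(H^2(J(X),\mathbb{Z})_{\widetilde{\phi}}=(\bU_X^{\perp})^{1,1}\), it upgrades the finite-index inclusion to an isometry by a case analysis --- for \(\phi_3^3,\phi_3^4\) via the birational equivalence \(J(X)\sim J^t(X)\) when \(X\in\mathcal{C}_8\), and in the remaining cases via the classification of Leech pairs (there being a unique pair with a \(p\)-elementary coinvariant lattice of the given rank). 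For the \emph{twisted} side your equivariance claim is legitimate, but only because it is a citable result (\cite[Proposition 5.1]{billigrossi2024non}), not because of a formal functoriality argument; the paper deduces the untwisted case from the twisted one, not the two in parallel as you do.

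The rest of your argument is sound and matches the paper's route: the orthogonal splitting \(H^2(J(X),\mathbb{Z})=\bU_X\oplus\bU_X^{\perp}\) from unimodularity of \(\bU\), the index-\(3\) discriminant count for \(\bU_X^t\oplus(\bU_X^t)^{\perp}\), and the reduction of the final dichotomy to \autoref{lem:Z3_discrim} and \autoref{lem:d_cong_2_mod_6} by asking whether the glue vector can be chosen with its \((\bU_X^t)^{\perp}\)-component supported on \(A_{prim}(X)\) rather than on \(T(X)\). If you repair the first isometry along the lines above, the proof goes through.
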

\begin{proof}
We argue as in the proof of \cite[Proposition 5.6]{billigrossi2024non}: there is an equivariant isogeny
of Hodge structures \[\widetilde{\alpha} \colon H_{prim}^4(X,\mathbb{Z})(-1) \rightarrow \bU_X^\perp\subset H^2(J(X),\mathbb{Z}).\]  It follows that \(H_{prim}^4(X,\mathbb{Z})_{\phi}(-N) \subseteq (\bU_X^{\perp})_{\widetilde{\phi}}= H^2(J(X),\mathbb{Z})_{\widetilde{\phi}}\) for some integer \(N>0\).
Since the cubic fourfold is general, we have \(H_{prim}^4(X,\mathbb{Z})_{\phi} \cong A_{prim}(X)\) and there are finite index embeddings \[ A_{prim}(X)(-N) \cong H_{prim}^4(X,\mathbb{Z})_{\phi}(-N) \subseteq (\bU_X^{\perp})_{\widetilde{\phi}} \subseteq (\bU_X^{\perp})^{1,1},\]
from which we conclude that \( H^2(J(X),\mathbb{Z})_{\widetilde{\phi}}=(\bU_X^{\perp})_{\widetilde{\phi}}= (\bU_X^{\perp})^{1,1}\) by observing that the last embedding is primitive and the lattices have the same rank.
We obtained an embedding \(\bU_X\oplus  H^2(J(X),\mathbb{Z})_{\widetilde{\phi}}\subseteq \NS(J(X))\) and the same argument gives an embedding \(\bU^t_X\oplus  H^2(J^t(X),\mathbb{Z})_{\widetilde{\phi}}\subseteq \NS(J^t(X))\). 
In the first case, the embedding is primitive and hence an equality, whereas in the second case the lattice \(\bU_X^t\cong \bU(3)\) has gluing subgroup \(\mathbb{Z}/3\mathbb{Z}\) with its orthogonal in \(H^2(J^t(X),\mathbb{Z})\). By \autoref{lem:Z3_discrim} and \autoref{lem:d_cong_2_mod_6}, the gluing subgroup \(\mathbb{Z}/3\mathbb{Z}\) lies in \(A_{prim}(X)=H_{prim}^4(X,\mathbb{Z})_{\phi}\) in the case \(X\in\mathcal{C}_d\) for some \(d\equiv 2(6)\), otherwise it lies in \(T(X)\).

The isometry  \(H_{prim}^4(X,\mathbb{Z})_\phi(-1)\cong  H^2(J^t(X),\mathbb{Z})_{\widetilde{\phi}^t}\) directly follows from \cite[Proposition 5.1]{billigrossi2024non} together with the generality assumption \(A_{prim}(X)=H_{prim}^4(X,\mathbb{Z})_\phi\). From the reasoning above, we know that there is an embedding \(H_{prim}^4(X,\mathbb{Z})_{\phi}(-N)\subseteq H^2(J(X),\mathbb{Z})_{\widetilde{\phi}}\). Moreover, the lattices \(H_{prim}^4(X,\mathbb{Z})_{\phi}(-1)\) and \(H^2(J(X),\mathbb{Z})_{\widetilde{\phi}}\) are 
\(p\)-elementary with the same rank and admit a fix-point free isometry of order \(p\). 
We now have two cases: 
\begin{itemize}
    \item A cubic \(X\) fourfold with automorphism \(\phi_3^3\) or \(\phi_3^4\) contains planes and \(X\in\mathcal{C}_8\): this implies that \(J(X)\) is birational to \(J^t(X)\) by \cite[Theorem 1.3]{Li.Pertusi.Zhao:elliptic.quintics} and \cite[Theorem 4.3]{Giovenzana.Giovenzana.Onorati:Li.Pertusi.Zhao}. Hence, there is a Hodge isometry \(H^2(J(X),\mathbb{Z})\cong H^2(J^t(X),\mathbb{Z})\).
    \item 
    For a cubic fourfold with automorphism \(\phi\in \{\phi_2^2, \phi_3^6, \phi_5^1,\phi_7^1,\phi_{11}^1\}\), the conditions in \cite[Proposition 2.16]{Brandhorst.Cattaneo:prime.order.unimodular} show that the inequality \[\rank(H^2(J(X),\mathbb{Z})_{\widetilde{\phi}})+l(H^2(J(X),\mathbb{Z})_{\widetilde{\phi}})\leq 24,\] is satisfied and \cite[Proposition 3.3]{Laza.Zheng:aut} implies that the coinvariant lattice \(H^2(J(X),\mathbb{Z})_{\widetilde{\phi}}\) with the isometry \( \widetilde{\phi}\) form a Leech pair. Leech pairs are classified in \cite{Hoehn.Mason:290.fixed.Leech}, and there is a unique pair with a \(p\)-elementary lattice of given rank.
\end{itemize}
In all cases, we see that when \(\phi\in\Aut(X)\) is symplectic of prime order we have an isometry \(H_{prim}^4(X,\mathbb{Z})_\phi(-1)\cong  H^2(J(X),\mathbb{Z})_{\widetilde{\phi}}\) and this concludes the proof.
\end{proof}
In particular, if \(X\) is general among the cubic fourfolds with a symplectic automorphism \(\phi\) of prime order then we have equalities \[\NS(J(X))=\bU_X\oplus H^2(J(X),\mathbb{Z})_{\widetilde{\phi}}\text{, }\NS(J^t(X))=(\bU_X^t\oplus H^2(J^t(X),\mathbb{Z})_{\widetilde{\phi}^t})_{sat} \]
and 
\[T(J(X))= \bU_X^\perp \subset H^2(J(X),\mathbb{Z})^{\tilde{\phi}} \text{, }T(J^t(X))=(\bU_X^t)^\perp\subset H^2(J^t(X),\mathbb{Z})^{\widetilde{\phi}^t},\]
i.e. \(J(X)\) and \(J^t(X)\) are also general. Here \((-)_{sat}\) denotes the operation of taking the smaller primitive lattice in \(H^2(J^t(X),\mathbb{Z})\) containing the given lattice, namely its saturation. 
As a direct application, together with the classification of the invariant and coinvariant lattices for cubic fourfolds, we determine the algebraic lattices of the associated LSV and twisted LSV manifolds. 
\begin{theorem}\label{induced_action_LSV}
    Let \(X\) be a cubic fourfold which is general among the ones admitting a symplectic automorphism \(\phi\in\Aut(X)\) of prime order, then the Néron-Severi lattice and the transcendental lattice of the manifolds \(J(X)\) and \(J^t(X)\) are in \autoref{tab:order_p_sympl_LSV}.
\end{theorem}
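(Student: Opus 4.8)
The plan is to read off the Néron--Severi and transcendental lattices of $J(X)$ and $J^t(X)$ directly from \autoref{inv_to_inv} together with the classification of $A(X)$, $A_{prim}(X)$ and $T(X)$ in \autoref{algebraic_lattices}, and then to match the resulting lattices with the named ones of \autoref{tab:order_p_sympl_LSV}. First I would record that for a general cubic fourfold $X$ with $\phi$ symplectic of prime order one has $H^4_{prim}(X,\mathbb{Z})_\phi=A_{prim}(X)$, hence $H^4_{prim}(X,\mathbb{Z})^\phi=A_{prim}(X)^\perp$, the orthogonal complement being taken inside $H^4_{prim}(X,\mathbb{Z})$; this equals $T(X)$ because $\eta_X\in A(X)$ forces $T(X)$ to be primitive in $H^4_{prim}(X,\mathbb{Z})$.

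For $J(X)$ the sublattice $\bU_X\cong\bU$ is unimodular, so $H^2(J(X),\mathbb{Z})=\bU_X\oplus H^4_{prim}(X,\mathbb{Z})(-1)$ as an orthogonal direct sum; since $\widetilde\phi$ fixes the algebraic classes of $\bU_X$, taking invariants and applying \autoref{inv_to_inv} yields
\[\NS(J(X))=\bU\oplus A_{prim}(X)(-1),\qquad T(J(X))=T(X)(-1),\]
which are then extracted from \autoref{tab:order_p_sympl_cubic} and the Gram matrices of \autoref{appendix_gram_matrices} (recall $A_{prim}(X)=\langle\eta_X\rangle^{\perp_{A(X)}}$). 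For $J^t(X)$ the sublattice $\bU_X^t\cong\bU(3)$ is not unimodular: the orthogonal sum $\bU_X^t\oplus H^4_{prim}(X,\mathbb{Z})(-1)$ sits with index $3$ in $H^2(J^t(X),\mathbb{Z})$, glued along a $\mathbb{Z}/3\mathbb{Z}$. By \autoref{inv_to_inv}, $\NS(J^t(X))$ is the saturation of $\bU_X^t\oplus A_{prim}(X)(-1)$, which equals that lattice unless $X\in\mathcal{C}_d$ for some $d\equiv 2(6)$ — by \autoref{class_cubiche} and \autoref{algebraic_lattices} this happens exactly for the families $F_3^3$ and $F_3^4$ — in which case it is the index-$3$ overlattice produced by \autoref{lem:Z3_discrim} from the subgroup of $D(A_{prim}(X))$ of discriminant $-1/3$. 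Correspondingly, $T(J^t(X))=(\bU_X^t)^\perp\subset H^2(J^t(X),\mathbb{Z})^{\widetilde\phi^t}$ is $T(X)(-1)$ in the remaining families and an index-$3$ overlattice of $T(X)(-1)$, again described by \autoref{lem:Z3_discrim}, for $F_3^3$ and $F_3^4$.

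To conclude, I would identify the lattices obtained above with the entries of \autoref{tab:order_p_sympl_LSV} exactly as in the proof of \autoref{algebraic_lattices}: each is determined by its parity, signature and discriminant form; when $\rank\geq l_q+2$ for every prime $q$ uniqueness follows from \cite[Theorem 1.13.2]{Nikulin:int.sym.bilinear.forms}, and for the few small-rank transcendental lattices one checks directly, e.g.\ with \cite{OSCAR}, that the genus has a single isometry class, using that its discriminant form must be anti-isometric to the one of the complementary Néron--Severi lattice inside the OG10 lattice. I expect the only real difficulty to be the $J^t(X)$ bookkeeping for $F_3^3$ and $F_3^4$: deciding whether the $\mathbb{Z}/3$ gluing vector between $\bU_X^t$ and $H^4_{prim}(X,\mathbb{Z})(-1)$ meets the invariant or the coinvariant part of $H^4_{prim}(X,\mathbb{Z})(-1)$, hence whether the index-$3$ jump occurs in $\NS(J^t(X))$ or in $T(J^t(X))$, and then confirming that the resulting $3$-elementary lattice is precisely the one listed rather than another lattice in its genus.
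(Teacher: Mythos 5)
Your overall strategy coincides with the paper's: combine \autoref{inv_to_inv} with the classification of $A_{prim}(X)$ and $T(X)$ from \autoref{algebraic_lattices}, then pin down each lattice via uniqueness in its genus (the paper invokes \cite[Proposition 1.14.2]{Nikulin:int.sym.bilinear.forms} for $\NS$ and lets it determine $T$). However, there is one genuine error in your treatment of $J^t(X)$: you claim that for $F_3^3$ and $F_3^4$ the transcendental lattice $T(J^t(X))$ is an index-$3$ overlattice of $T(X)(-1)$. It is not: one has $T(J^t(X))=T(X)(-1)$ in \emph{every} case. Indeed, $T(X)$ is primitive in $H^4(X,\mathbb{Z})$ (being an orthogonal complement), hence primitive in $H^4_{prim}(X,\mathbb{Z})\cong (\bU_X^t)^{\perp}(-1)$, and since $(\bU_X^t)^{\perp}$ is itself primitive in $H^2(J^t(X),\mathbb{Z})$, the sublattice $T(X)(-1)$ is primitive there as well; as it is an irreducible Hodge substructure containing $H^{2,0}$ and has the correct rank, it must equal $T(J^t(X))=\NS(J^t(X))^{\perp}$. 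The $\mathbb{Z}/3\mathbb{Z}$ glue between $\bU_X^t$ and $(\bU_X^t)^{\perp}$ enlarges only $\NS(J^t(X))$ (when the glue class lies on the $A_{prim}(X)$ side, i.e.\ for $F_3^3$, $F_3^4$); when it lies on the $T(X)$ side it merely changes the index of $\NS\oplus T$ inside $H^2(J^t(X),\mathbb{Z})$ without enlarging either summand. Your version would produce entries for $T(J^t(X))$ in the $\phi_3^3$ and $\phi_3^4$ rows with discriminant smaller by a factor of $9$ than the correct ones $\bU(3)^{\oplus 2}\oplus\bE_6(-1)$ and $\bU(3)^{\oplus 2}$, contradicting \autoref{tab:order_p_sympl_LSV}. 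Relatedly, \autoref{lem:Z3_discrim} is stated for gluing against the rank-one lattice $\langle\eta_X\rangle=[3]$, whose discriminant group is $\mathbb{Z}/3\mathbb{Z}$ with form $-1/3$; it does not apply verbatim to $\bU_X^t\cong\bU(3)$, whose discriminant group is $(\mathbb{Z}/3\mathbb{Z})^{2}$, so the overlattice of $\NS(J^t(X))$ should be justified by \autoref{inv_to_inv} directly rather than by that lemma.

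The remainder of your argument is correct and agrees with the paper: $\NS(J(X))=\bU\oplus A_{prim}(X)(-1)$ and $T(J(X))=T(X)(-1)$ since $\bU_X$ is unimodular, the saturation issue for $\NS(J^t(X))$ is governed by whether $X\in\mathcal{C}_d$ with $d\equiv 2\,(6)$, and the final identification with the displayed lattices proceeds by genus and uniqueness exactly as in the proof of \autoref{algebraic_lattices}.
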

\begin{proof}
    The lattices \(H_{prim}^4(X,\mathbb{Z})_\phi\) and \(H_{prim}^4(X,\mathbb{Z})^\phi\) are classified in \autoref{algebraic_lattices}, while the lattices \(\NS(J(X))\) and \(\NS(J^t(X))\) can be computed using \autoref{inv_to_inv}. We notice that \(\NS\) is unique in its genus in all the cases by \cite[Proposition 1.14.2]{Nikulin:int.sym.bilinear.forms}, and it also determines \(T\) uniquely.
\end{proof}

{\begin{longtable}{lllllll}
\caption{Pairs \((\NS,T)\) for \(J(X),J^t(X)\) with birational action induced by a general cubic fourfold with a symplectic automorphism \(\phi\) of prime order \(p\)}
\label{tab:order_p_sympl_LSV} \\
\toprule
No. & \(\rank(\NS)\) &  \(T\) & \(\NS\) & \(\sign(T)\) & bir. model & \(p\)  \\

		\midrule
		\endfirsthead
		
		\multicolumn{7}{c}%
		{\tablename\ \thetable{}, follows from previous page} \\
	No. & \(\rank(\NS)\) &  \(T\) & \(\NS\) & \(\sign(T)\) & bir. model & \(p\)  \\
		
		\midrule
		\endhead
	
		\multicolumn{7}{c}{Continues on next page} \\
		\endfoot
		
		\bottomrule
		\endlastfoot
\(\phi_2^2\) & \(10\) & \( \bU^{\oplus 2}\oplus\bE_8(-2)\oplus\bA_2(-1)\)& \( \bU\oplus \bE_8(-2)

\)& \((2,12)\) & \(J(X)\)  & \(2\) \\

\(\phi_2^2\) & \(10\) & \( \bU^{\oplus 2}\oplus\bE_8(-2)\oplus\bA_2(-1)\)& \( \bU(3)\oplus \bE_8(-2)

\)& \((2,12)\) & \(J^t(X)\)  & \(2\) \\
  
\(\phi_3^3\) & \(14\) & \( \bU(3)^{\oplus 2}\oplus\bE_6(-1)\)& \( \bU\oplus \bA_2(-1)^{\oplus 6}

\)& \((2,8)\) & \(J(X),J^t(X)\)  & \(3\) \\

\(\phi_3^6\) & \(14\) & \( \bU(3)^{\oplus 2}\oplus\bA_2(-1)^{\oplus 3} \)& \( \bU\oplus \bA_2(-1)^{\oplus 6}

\)& \((2,8)\) & \(J(X)\)  & \(3\) \\

\(\phi_3^6\) & \(14\) & \( \bU(3)^{\oplus 2}\oplus\bA_2(-1)^{\oplus 3} \)& \( \bU(3)\oplus \bA_2(-1)^{\oplus 6}

\)& \((2,8)\) & \(J^t(X)\)  & \(3\) \\

\(\phi^4_3\) & \(20\) & \(\bU(3)^{\oplus 2} \)& \( \bU\oplus \bE_8(-1)\oplus \bA_2(-1)^{\oplus 5}

\)& \((2,2)\) & \(J(X),J^t(X)\)  & \(3\) \\
\hline
\(\phi^1_5\) & \(18\) & \(\bU(5)^{\oplus 2}\oplus \bA_2(-1)\) & \( \bU\oplus  \bA_4(-1)^{\oplus 4}
\)& \((2,4)\) & \(J(X)\)  & \(5\) \\

\(\phi^1_5\) & \(18\) & \(\bU(5)^{\oplus 2}\oplus \bA_2(-1)\) & \( \bU(3)\oplus \bA_4(-1)^{\oplus 4}
\)& \((2,4)\) & \(J^t(X)\)  & \(5\) \\
\hline
\(\phi^1_7\) & \(20\) & \(\bU(7)\oplus \bH_{21}\) & \( \bU\oplus \bA_6(-1)^{\oplus 3}
\)& \((2,2)\) & \(J(X)\)  & \(7\) \\

\(\phi^1_7\) & \(20\) & \(\bU(7)\oplus \bH_{21}\) & \( \bU(3)\oplus \bA_6(-1)^{\oplus 3}
\)& \((2,2)\) & \(J^t(X)\)  & \(7\) \\

\hline
\(\phi^1_{11}\) & \(22\) & \(\bA_2(11)\) & \( \bU\oplus \bA_{10}(-1)^{\oplus 2}
\)& \((2,0)\) & \(J(X)\)  & \(11\) \\

        \(\phi^1_{11}\) & \(22\) & \(\bA_2(11)\) & \( \bU(3)\oplus \bA_{10}(-1)^{\oplus 2}
\)& \((2,0)\) & \(J^t(X)\)  & \(11\) \\

	\midrule
    	\end{longtable}}

\appendix
\section{Gram matrices of the algebraic lattices}\label{appendix_gram_matrices}
In this appendix we describe the lattice \(A(X)\) for a cubic fourfold \(X\) which is general among the ones admitting a symplectic automorphism of prime order. The lattice \(A(X)\) is not unique in its genus and to describe it we need to determine its isometry class; to do that, we give the associated Gram matrix. Also, we provide the coordinates of the class \(\eta_X\), which allows to recover the matrix of \(A_{prim}(X)\) from the one of \(A(X)\). 
It turns out that lattices \(A_{prim}(X)\) are known in literature, and then we refer to them precisely in each subsection.

For the reader's convenience, we collect in \autoref{tab:genera_lattices} the genera of the lattices \(A(X),A_{prim}(X)\) and \(T(X)\), according to Conway--Sloane notation \cite[Chapter 15]{Conway.Sloane:sphere.packings.lattices.groups}. 

\begin{center}
{\begin{longtable}{lllllll}
\caption{Genera of triples \((A(X),A_{prim}(X),T(X))\) for a general cubic fourfold \(X\in F_p^i\) with a symplectic automorphism \(\phi_p^i\) of prime order \(p\).}
\label{tab:genera_lattices} \\
		
		\toprule
	 No. & \(A(X)\) &  \(A_{prim}(X)\)  & \(T(X)\)  \\

		\midrule
		\endfirsthead
		
		\multicolumn{5}{c}%
		{\tablename\ \thetable{}, follows from previous page} \\
	 No. & \(A(X)\) &  \(A_{prim}(X)\)  & \(T(X)\)  \\
		
		\midrule
		\endhead
	
		\multicolumn{5}{c}{Continues on next page} \\
		\endfoot
		
		\bottomrule
		\endlastfoot
\(\phi_3^3\) & \(\I_{13,0}3^{-5}\) & \( \II_{12,0}3^{+6}\) & \(\II_{8,2}3^{+5} \)  \\

\(\phi_3^6\) & \(\I_{13,0}3^{+7}\) & \(\II_{12,0}3^{+6}\) & \(\II_{8,2}3^{-7} \) \\

\(\phi^4_3\) & \(\I_{19,0}3^{+4}\) & \(\II_{18,0} 3^{-5}\) & \(\II_{2,2}3^{+4}\)  \\
\hline
\(\phi^1_5\) & \(\I_{17,0}3^{+1}5^{+4}\) & \(\II_{16,0}5^{+4}\) & \(\II_{4,2}3^{-1}5^{+4}\)  \\
\hline
\(\phi^1_7\) & \(\I_{19,0}3^{+1}7^{-3}\) & \(\II_{18,0} 7^{-3}\) & \(\II_{2,2}3^{-1}7^{3}\)  \\

\hline
\(\phi_{11}^1\) & \(\I_{21,0}3^{+1}11^{+2}\) & \( \II_{20,0}11^{+2}\)   & \( \II_{0,2}3^{-1}11^{+2}\)  \\

	\midrule
    	\end{longtable}}
\end{center}

\subsection{The automorphism {\(\phi_3^3\)}}
We have 
\[A(X_3^3)=
\begin{pmatrix}
    
3& 1& -1 &-1& -1& -1&-1& -1& -1& 0& 0& -1& -1\\
1 &3 &-1& 0& 1& -1& -1& 0 &0 &1& 1& 1& -1\\
-1& -1& 3 &1& 0& 1& 1& 1& 1& 1& 1& 1& 1\\
-1& 0& 1& 3& 0& 1& 1& 1& 1& -1& -1& 1& 1\\
-1& 1& 0& 0& 3& 1& 1& 0& 0& 0& 0& 1& -1\\
-1&-1& 1& 1& 1& 3& 1& 1& 1& -1& -1& 0& 0\\
-1& -1& 1& 1& 1& 1& 3& 1& 1& -1& 0& 0& 0\\
-1& 0& 1& 1& 0& 1& 1& 3& 1& 1& 1& 0& 0\\
-1& 0& 1& 1& 0& 1& 1& 1& 3& 0& 1& 1& 0\\
0& 1& 1& -1& 0 &-1& -1& 1& 0& 4 &2 &1& 0\\
0& 1& 1& -1& 0& -1& 0& 1& 1& 2& 4 &1 &0\\
-1& 1& 1& 1& 1& 0& 0& 0& 1& 1& 1& 3& 0\\
-1& -1& 1& 1& -1& 0& 0& 0& 0& 0& 0& 0& 3
\end{pmatrix}\]
with \(\eta_{X_3^3}=(1,0,0,0,0,0,0,0,0,0,0,0,0)\). The lattice \(A(X_3^3)\) is the only index \(3\) overlattice of \(\langle\eta_{X_3^3}\rangle\oplus A_{prim}(X_3^3)\) not containing vectors of square \(1\) which is obtained as in \autoref{lem:Z3_discrim} and \autoref{lem:d_cong_2_mod_6}. Moreover, \(A_{prim}(X_3^3)\cong A_{prim}(X_3^6)\).
\subsection{The automorphism {\(\phi_3^6\)}}
We have \(A(X_3^6)=\langle\eta_{X_3^6}\rangle\oplus A_{prim}(X_3^6)\) and

\[A_{prim}(X_3^6)=\begin{pmatrix} 4 & 1 &-2 & 2 & 2 & 1 &-2 & 2 &-1&  1&  2 &-2\\1 & 4  &0 & 2 &-1 &-1 & 1& -1 &-2&  2& -1&  1\\-2  &0 & 4 &-1& -2 & 0 & 2 & 0 &-1 & 1 & 0 & 0\\ 2 & 2 &-1 & 4 & 0 &-1 & 0 & 0  &0  &2 & 0 &-1\\2 &-1 &-2 & 0 & 4 & 0 &-2 & 2 & 1&  0 & 2& -2\\1 &-1 & 0 &-1 & 0 & 4 &-2 & 2 & 0& -1 & 0 &-1\\-2 & 1&  2 & 0& -2 &-2 & 4 &-2 & 0&  0 &-1 & 2\\ 2 &-1 & 0 & 0 & 2 & 2& -2 & 4 & 0&  1 & 2 &-2\\-1 &-2& -1 & 0&  1 & 0 & 0 & 0 & 4& -1 &-1 & 0\\1 & 2  &1 & 2  &0 &-1 & 0 & 1 &-1&  4 & 1 &-1\\ 2 &-1 & 0 & 0 & 2 & 0& -1 & 2 &-1&  1 & 4 &-2\\-2 & 1&  0 &-1& -2 &-1&  2 &-2 & 0& -1& -2 & 4\end{pmatrix}.\]

\begin{remark}
     The lattice \(A_{prim}(X_3^6)\cong A_{prim}(X_3^3) \cong K_{12} \cong \Omega_3\), where \(K_{12}\) is the one in \cite{plesken1993constructing} (see also the database \cite{Nebe_website}) and \(\Omega_3\) is the one in \cite[Prop. 4.2]{Garbagnati.Sarti:K3.symplectic.prime}. This lattice is famously known as the Coxeter-Todd lattice and it appeared for the first time in \cite{coxeter1953extreme}.
\end{remark}

\subsection{The auomorphism {\(\phi_3^4\)}} We have 
\[A(X_3^4)=
\tiny\begin{pmatrix}
3& 0& 0& 0& 0& 0& 0& 0& 0& 0& 0& 0& 0& 0& 0& 0& 0& 0& -1\\
0& 4& -2& 1& 0& 2& 1& -2& 0& 2& -2& 1& -1& -1& -2& -2& 1& -1& 2\\
0& -2& 4& 1& -3& -2& -1& 2& 2& -1& 1& 1& -1& -1& 2& 2& 0& 2& 4\\
0& 1& 1& 4& -4& -1& -1& 1& 2& 1& 1& 2& 0& -2& -1& 1& 1& 2& 6\\
0& 0& -3& -4& 8& 2& 0& -2& -2& -2& -1& -1& 0& 1& 1& -2& -2& -2& -8\\
0& 2& -2& -1& 2& 4& 0& -2& -1& 2& -2& -1& -1& 0& 0& -2& -1& -2& -2\\
0& 1& -1& -1& 0& 0& 4& -2& -2& 1& -1& 0& 1& 0& -1& 0 &2& -2& 0\\
0& -2& 2& 1& -2& -2& -2& 4& 1 &-1& 1& 0& 0& 1& 0& 2& 0& 2& 1\\
0& 0& 2& 2 &-2& -1& -2& 1& 4& 0& 0& 1& -2& -1& 1 &0& -1& 2 &4\\
0& 2& -1& 1& -2& 2& 1& -1& 0& 4& -2& 0 &0& 0& -1& -1& 0& -1& 2\\
0& -2& 1& 1& -1& -2& -1& 1& 0& -2& 4& 0& 2& 0& 0& 2& 0& 2 &1\\
0& 1& 1& 2& -1& -1& 0 &0 &1 &0 &0 &4 &0 &-2& 0& 0& 1& 1& 5\\
0& -1& -1& 0& 0& -1& 1 &0& -2& 0& 2& 0& 4& 1& -1& 1& 0 &0 &-1\\
0& -1& -1& -2& 1 &0 &0 &1 &-1& 0& 0& -2 &1 &4 &-1 &0 &-1& -1& -4\\
0& -2& 2& -1& 1& 0& -1& 0& 1& -1& 0& 0& -1& -1& 4& 0& -2& 0& -1\\
0& -2& 2& 1& -2& -2& 0& 2& 0& -1& 2& 0& 1 &0 &0 &4& 1& 2 &2\\
0& 1& 0& 1& -2& -1& 2& 0& -1& 0 &0 &1 &0 &-1& -2& 1& 4& 0& 3\\
0& -1& 2& 2& -2& -2& -2& 2 &2& -1& 2& 1& 0& -1& 0& 2 &0 &4 &4\\
-1 &2 &4& 6 &-8& -2& 0& 1& 4& 2& 1& 5& -1& -4& -1& 2 &3 &4 &15
\end{pmatrix}
\]
with \(\eta_{X_3^4}=(1,0,0,0,0,0,0,0,0,0,0,0,0,0,0,0,0,0,0)\).
The lattice \(A(X_4^3)\) is the only index \(3\) overlattice of \(\langle\eta_{X_4^3}\rangle\oplus A_{prim}(X_4^3)\) not containing vectors of square \(1\) which is obtained as in \autoref{lem:Z3_discrim} and \autoref{lem:d_cong_2_mod_6}.
\begin{remark}
 In this case the primitive algebraic lattice \(A_{prim}(X_3^4) \cong \langle \eta_{X_3^4} \rangle ^{\perp} \subset A(X)\) is isometric to the lattice \(K_{18}\) and a matrix for this lattice is available at \cite{plesken1993constructing} (see also the database \cite{Nebe_website}).   
\end{remark}

\subsection{The automorphism {\(\phi_5^1\)}}
We have \(A(X_5^1)=\langle\eta_{X_5^1}\rangle\oplus A_{prim}(X_5^1)\) and
\[A_{prim}(X_5^1)=
\tiny\begin{pmatrix}
4& -2&  0&  1&  2&  0&  2&  2& -2& -1&  0& -2& -2& -1& -2& -2\\ 
-2&  4&  1& -1&  0&  1& -2& -2&  0& -1& -1&  2&  2& -1&  0&  0\\
0&  1&  4&  1&  0&  2& -1& -1&  1&  1&  1& -1&  1&  1& -1& -1\\
1 &-1&  1&  4&  1&  2&  1& -1&  1&  1& -1&  0& -2& -1& -2&  1\\ 2&  0&  0&  1&  4&  1&  2&  1& -2&  0& -2& -1& -2& -2& -1& -1\\
0&  1&  2&  2&  1&  4&  0& -2&  0&  0&  0&  0&  0& -1& -2&  0\\
2& -2& -1&  1&  2&  0&  4&  2& -2&  1&  0& -1& -2&  0& -1& -1\\
2& -2& -1& -1&  1& -2&  2&  4& -2&  0&  0& -2& -1&  1&  0& -2\\ 
-2&  0 & 1&  1& -2&  0& -2& -2&  4&  1&  0&  1&  1&  1&  1&  2\\
-1& -1&  1&  1&  0&  0&  1&  0&  1&  4&  0& -1&  0&  1&  1&  1\\ 
0 &-1&  1& -1& -2&  0&  0&  0&  0&  0&  4& -1&  1&  2&  0& -1\\
-2&  2& -1&  0& -1&  0& -1& -2&  1& -1& -1&  4&  1& -1&  0&  1\\
-2&  2&  1& -2& -2&  0& -2& -1&  1&  0&  1&  1&  4&  1&  1&  0\\
-1& -1&  1& -1& -2& -1&  0
&1 & 1 & 1 & 2 &-1 & 1 & 4 & 1 & 0\\
-2&  0& -1& -2& -1& -2& -1&  0&  1&  1&  0&  0&  1&  1&  4&  1\\
-2&  0& -1&  1& -1&  0& -1& -2&  2&  1& -1&  1&  0&  0&  1&  4
\end{pmatrix}.
\]
\begin{remark}
    The lattice \(A_{prim}(X_5^1)\cong \Omega_5 \cong DIH_{16}(10)\), where \(\Omega_5\) is the lattice described in \cite[Prop. 4.4]{Garbagnati.Sarti:K3.symplectic.prime} and \(DIH_{16}(10)\) in \cite{EE8_lattices}. 
\end{remark}

\subsection{The automorphism {\(\phi_7^1\)}}
We have \(A(X_7^1)=\langle\eta_{X_7^1}\rangle\oplus A_{prim}(X_7^1)\) and
\[A_{prim}(X_7^1)=\tiny\begin{pmatrix}
     4& -1& -1&  1&  1& -1& -1&  2& -1& -1&  2& -2&  2&  2& -1& -1&  1&  2\\
     -1&  4&  1& -2&  1&  2&  2& -2&  1&  2&  1& -1&  0&  1&  1&  2& -2&  0\\
     -1&  1&  4& -1& -1&  2& -1& -1&  2& -1& -1&  0& -2&  0&  2&  2&  1&  1\\
     1 &-2& -1&  4& -1& -2& -2&  2& -2& -2&  1&  1&  0&  1& -2& -1&  0&  0\\
     1&  1& -1& -1&  4&  1&  2& -1&  1&  1&  1& -2&  0&  1& -1&  1&  0&  1\\
     -1&  2&  2& -2&  1&  4&  1& -1&  2&  0& -1&  0& -2&  1&  2&  1& -1&  0\\
     -1&  2& -1& -2&  2 & 1&  4& -2&  0&  2&  0&  0&  0&  0&  0&  1& -2& -1\\
     2& -2& -1&  2& -1& -1& -2&  4& -2& -2&  0&  0&  1&  1&  0& -2&  0&  1\\
     -1&  1&  2& -2&  1&  2&  0& -2&  4&  1& -1& -1& -2&  0&  1&  1&  1&  1\\
     -1&  2& -1& -2&  1&  0&  2& -2&  1&  4&  0& -1&  1& -1&  0&  1& -1&  0\\
     2&  1& -1&  1&  1& -1&  0&  0& -1&  0&  4& -2&  2&  2& -2&  0&  0&  0\\
     -2& -1&  0&  1& -2&  0&  0&  0& -1& -1& -2&  4& -1& -1&  0& -1& -1& -2\\
     2&  0& -2&  0&  0& -2&  0&  1& -2&  1 & 2 &-1 & 4&  0& -1& -1&  0 & 0\\
     2&  1&  0&  1&  1&  1&  0&  1&  0& -1&  2& -1&  0&  4& -1&  0& -1&  1\\
     -1&  1&  2& -2& -1&  2&  0&  0&  1&  0& -2&  0& -1& -1&  4&  1&  0&  0\\
     -1&  2&  2& -1&  1&  1&  1& -2&  1&  1&  0& -1& -1&  0&  1&  4&  0&  1\\
     1& -2&  1&  0&  0& -1& -2&  0&  1& -1&  0& -1&  0& -1&  0&  0&  4&  1\\  2&  0&  1&  0&  1&  0& -1&  1&  1&  0&  0& -2&  0&  1&  0&  1&  1&  4\end{pmatrix}.\]
     
\begin{remark}
     We have \(A_{prim}(X_7^1)\cong \Omega_7\), where \(\Omega_7\) is the lattice described in \cite[Prop. 4.6]{Garbagnati.Sarti:K3.symplectic.prime}.
\end{remark}

\subsection{The automorphism {\(\phi_{11}^1\)}}

We have \(A(X_{11}^1)=\langle\eta_{X_{11}^1}\rangle\oplus A_{prim}(X_{11}^1)\) and
\[A_{prim}(X_{11}^1)=\tiny
\begin{pmatrix}
 4& -1&  2& -1&  0& -1&  2&  2&  1&  2&  2& -1&  2& -1&  2&  2& -2&  2& -1& -2\\
 -1&  4&  0&  0&  0& -1& -1& -2& -2& -2& -2& -1& -2&  2& -2& -1&  2&  0& -1&  1\\
 2&  0&  4& -1&  1&  0&  2 & 0&  1&  1&  2& -2&  1& -1&  0&  1& -1&  2& -1& -1\\
 -1&  0& -1&  4&  1&  1&  0&  0& -2& -2& -1&  1&  1& -1& -2& -2&  2& -1&  2&  2\\
 0&  0&  1&  1&  4&  2&  2&  1&  1& -1&  0& -1&  1& -1 & 0& -2&  0& -1&  2&  2\\
 -1& -1&  0&  1&  2&  4&  0&  0&  1&  0&  0&  1&  0&  0& -1& -2& -1&  0&  1&  1\\
 2& -1&  2&  0&  2&  0&  4&  2&  1&  1&  2& -1&  2& -1&  1&  0& -1&  1&  0&  0\\
 2 &-2 & 0 & 0 & 1 & 0 & 2 & 4 & 2 & 1 & 2 & 0&  2& -2&  2&  1& -1&  0&  1& -1\\
 1& -2&  1& -2&  1&  1&  1&  2&  4&  2&  2&  0&  1& -1&  2&  1& -2&  0&  0& -1\\
 2 &-2 & 1 &-2& -1&  0&  1&  1&  2&  4&  2&  0&  1&  0&  2&  2& -2&  1& -1& -2\\
 2& -2&  2& -1&  0&  0&  2&  2&  2&  2&  4& -1&  2& -2&  1&  2& -2&  2&  0& -2\\
 -1& -1& -2&  1& -1&  1& -1&  0&  0&  0& -1&  4& -1&  1&  0&  0&  0& -1&  0&  1\\
 2& -2&  1&  1&  1&  0&  2&  2&  1&  1&  2& -1&  4& -2&  1&  0& -1&  1&  1& -1\\
 -1&  2& -1& -1& -1&  0& -1& -2& -1&  0& -2&  1& -2&  4& -1& -1&  0&  0& -2&  0\\
 2 &-2 & 0 &-2&  0& -1&  1&  2&  2&  2&  1&  0&  1& -1&  4&  2& -2&  0&  0& -1\\
 2 &-1&  1& -2& -2& -2&  0&  1&  1&  2&  2&  0&  0& -1&  2&  4& -1&  1& -1& -2\\
 -2&  2& -1&  2&  0& -1& -1& -1& -2& -2& -2&  0& -1&  0& -2& -1&  4& -2&  1&  2\\ 
 2 & 0 & 2& -1& -1&  0&  1&  0&  0&  1&  2& -1&  1&  0&  0&  1& -2&  4& -2& -2\\
 -1& -1& -1&  2&  2&  1&  0&  1&  0& -1&  0&  0&  1& -2&  0& -1&  1& -2&  4&  2\\
 -2&  1& -1&  2&  2&  1&  0& -1& -1& -2& -2&  1& -1&  0& -1& -2&  2& -2&  2&  4
\end{pmatrix}.\]

\begin{remark}
    The lattice \(A_{prim}(X_{11}^1)\cong S_{11,K3^{[2]}} \cong A_{11}\otimes^{(3)}A_2\), where \(S_{11,K3^{[2]}}\) is the lattice defined in \cite[Example 2.9]{Mong13} and \(A_{11}\otimes^{(3)}A_2\) is defined in \cite{nebe1998some}. This lattice is the Mordell-Weil lattice of an elliptic curve over a field of characteristic \(11\) \cite{shioda1991mordell} (see also \cite{Nebe_website}).
\end{remark}
\bibliographystyle{alpha}
\bibliography{references}

\end{document}